\documentclass[final,onefignum,onetabnum]{siamart220329}

\input{./head/package}



\DeclareMathOperator{\EE}{\mathbb{E}}



\DeclarePairedDelimiterX\inner[2]{\langle}{\rangle}{{#1},{#2}}
\DeclarePairedDelimiter\abs{|}{|}
\DeclarePairedDelimiter\norm{\|}{\|}

\DeclarePairedDelimiter\set{\{}{\}}
\DeclarePairedDelimiter\prn{(}{)}
\DeclarePairedDelimiter\bra{[}{]}
\DeclarePairedDelimiterX\Set[2]{\{}{\}}{\mspace{2mu}{#1}\;\delimsize|\;{#2}\mspace{2mu}}
\DeclarePairedDelimiterX\Prn[2]{(}{)}{\mspace{2mu}{#1}\;\delimsize|\;{#2}\mspace{2mu}}
\DeclarePairedDelimiterX\Bra[2]{[}{]}{\mspace{2mu}{#1}\;\delimsize|\;{#2}\mspace{2mu}}


\newcommand{\R}{\mathbb R}


\renewcommand{\epsilon}{\varepsilon}


\NewDocumentCommand{\exsub}{s m O{} m}{%
  \IfBooleanT{#1}{\EE_{#2}\nolimits\bra*{#4}}%
  \IfBooleanF{#1}{\EE_{#2}\nolimits\bra[#3]{#4}}%
}
\NewDocumentCommand{\ex}{s O{} m}{%
  \IfBooleanT{#1}{\EE\nolimits\bra*{#3}}%
  \IfBooleanF{#1}{\EE\nolimits\bra[#2]{#3}}%
}
\NewDocumentCommand{\cex}{s O{} m m}{%
  \IfBooleanT{#1}{\EE\nolimits\Bra*{#3}{#4}}%
  \IfBooleanF{#1}{\EE\nolimits\Bra[#2]{#3}{#4}}%
}

\usepackage{CJKutf8}


\newcommand{\mathInd}{\hphantom{{}={}}}
\newcommand{\by}[2][]{\text{\pbox[c]{\textwidth}{(by \pbox[t]{\textwidth}{\,\!#2)#1}}}}




\crefname{algorithm}{Algorithm}{Algorithms}
\crefname{line}{Line}{Lines}
\crefname{section}{Section}{Sections}
\crefname{subsection}{Section}{Sections}
\crefname{appendix}{Appendix}{Appendices}
\crefname{table}{Table}{Tables}
\crefname{figure}{Figure}{Figures}
\crefname{equation}{}{}
\Crefname{equation}{Eq.}{Eqs.}


\setlist[itemize]{
  topsep=0.4\baselineskip,
  itemsep=0\baselineskip,
  leftmargin=1.5em,
}

\setlist[enumerate]{
  font=\upshape,
  label=(\alph*),
  ref=(\alph*),
  topsep=0.4\baselineskip,
  itemsep=0\baselineskip,
  leftmargin=2em,
}

\newlist{enuminasm}{enumerate}{1} 
\setlist[enuminasm]{
  font=\upshape,
  label=(\alph*),
  ref=\theassumption(\alph*),
  topsep=0.4\baselineskip,
  itemsep=0\baselineskip,
  leftmargin=2em,
}
\crefalias{enuminasmi}{assumption}

\newlist{enuminthm}{enumerate}{1}
\setlist[enuminthm]{
  font=\upshape,
  label=(\alph*),
  ref=\thetheorem(\alph*),
  topsep=0.4\baselineskip,
  itemsep=0\baselineskip,
  leftmargin=2em,
}
\crefalias{enuminthmi}{theorem}

\newlist{enuminlem}{enumerate}{1}
\setlist[enuminlem]{
  font=\upshape,
  label=(\alph*),
  ref=\thelemma(\alph*),
  topsep=0.4\baselineskip,
  itemsep=0\baselineskip,
  leftmargin=2em,
}
\crefalias{enuminlemi}{lemma}

\newlist{enuminprop}{enumerate}{1}
\setlist[enuminprop]{
  font=\upshape,
  label=(\alph*),
  ref=\theproposition(\alph*),
  topsep=0.4\baselineskip,
  itemsep=0\baselineskip,
  leftmargin=2em,
}
\crefalias{enuminpropi}{proposition}

\newlist{enumincond}{enumerate}{1}
\setlist[enumincond]{
  font=\upshape,
  label=(\alph*),
  ref=\thecondition(\alph*),
  topsep=0.4\baselineskip,
  itemsep=0\baselineskip,
  leftmargin=2em,
}
\crefalias{enumincondi}{condition}

\usepackage[caption=false]{subfig}
\usepackage{cite}
\usepackage{threeparttable}
\newsiamthm{assumption}{Assumption}
\crefname{assumption}{Assumption}{Assumptions}

\newcommand{\Ltrue}{L_f}
\newcommand{\Mtrue}{M_f}
\newcommand{\Lest}{L}
\newcommand{\Mest}{M}
\newcommand{\Lmax}{\bar L}
\newcommand{\Mmax}{\bar M}
\newcommand{\Linit}{\Lest_{\mathrm{init}}}

\newcommand{\Proposed}{\textsf{Proposed}\xspace}
\newcommand{\GD}{\textsf{GD}\xspace}
\newcommand{\JNJ}{\textsf{JNJ2018}\xspace}
\newcommand{\LL}{\textsf{LL2022}\xspace}
\newcommand{\OC}{\textsf{OC2015}\xspace}
\newcommand{\LBFGS}{\textsf{L-BFGS}\xspace}
\newcommand{\CG}{\textsf{CG}\xspace}




\usepackage{amsfonts}
\usepackage{graphicx}


\newsiamremark{remark}{Remark}
\newsiamremark{hypothesis}{Hypothesis}
\crefname{hypothesis}{Hypothesis}{Hypotheses}
\newsiamthm{claim}{Claim}

\headers{Parameter-free accelerated gradient descent}{N.~Marumo and A.~Takeda}

\title{Parameter-free accelerated gradient descent for nonconvex minimization\thanks{
  Submitted to the editors 12/13/2022.
\funding{This work was partially supported by JSPS KAKENHI (19H04069) and JST ERATO (JPMJER1903).}}}

\author{%
  Naoki Marumo%
  \thanks{Graduate School of Information Science and Technology, University of Tokyo, Tokyo, Japan (\email{marumo@mist.i.u-tokyo.ac.jp})}\and
  Akiko Takeda\footnotemark[2]
  \thanks{Center for Advanced Intelligence Project, RIKEN, Tokyo, Japan}%
}






\begin{document}

\maketitle

\begin{abstract}
  We propose a new first-order method for minimizing nonconvex functions with a Lipschitz continuous gradient and Hessian. The proposed method is an accelerated gradient descent with two restart mechanisms and finds a solution where the gradient norm is less than $\epsilon$ in $O(\epsilon^{-7/4})$ function and gradient evaluations. Unlike existing first-order methods with similar complexity bounds, our algorithm is parameter-free because it requires no prior knowledge of problem-dependent parameters, e.g., the Lipschitz constants and the target accuracy $\epsilon$. The main challenge in achieving this advantage is estimating the Lipschitz constant of the Hessian using only first-order information. To this end, we develop a new Hessian-free analysis based on two technical inequalities: a Jensen-type inequality for gradients and an error bound for the trapezoidal rule.
  Several numerical results illustrate 
  that the proposed method performs comparably to existing algorithms with similar complexity bounds, even without parameter tuning.
\end{abstract}

\begin{keywords}%
  Nonconvex optimization,
  First-order method,
  Lipschitz continuous Hessian,
  Complexity analysis
\end{keywords}

\begin{MSCcodes}
	90C26,
  90C30,
  65K05
\end{MSCcodes}

\section{Introduction}
This paper studies general nonconvex optimization problems:
\begin{align}
  \min_{x \in \R^d} \ 
  f(x),
\end{align}
where $f: \R^d \to \R$ is bounded below (i.e., $\inf_{x \in \R^d} f(x) > - \infty$) and satisfies the following Lipschitz assumption.
\begin{assumption}
  \label{asm:gradient_hessian_lip}
  Let $\Ltrue, \Mtrue > 0$ be constants.
  \begin{enuminasm}
    \item
    \label{asm:gradient_lip}
    $\norm*{\nabla f(x) - \nabla f(y)} \leq \Ltrue \norm*{x - y}$ for all $x, y \in \R^d$,
    \item
    \label{asm:hessian_lip}
    $\norm*{\nabla^2 f(x) - \nabla^2 f(y)} \leq \Mtrue \norm*{x - y}$ for all $x, y \in \R^d$.
  \end{enuminasm}
\end{assumption}
For such general problems, algorithms that access $f$ only through function and gradient evaluations, called \emph{first-order methods} \cite{beck2017first,lan2020first}, have attracted widespread attention because of their computational efficiency.
The classical result for nonconvex problems is that gradient descent finds an $\epsilon$-stationary point (i.e., $x \in \R^d$ where $\norm*{\nabla f(x)} \leq \epsilon$) in $O(\epsilon^{-2})$ function and gradient evaluations under \cref{asm:gradient_lip}.
This complexity bound cannot be improved only with \cref{asm:gradient_lip} \cite{cartis2010complexity,carmon2020lower}.

More sophisticated first-order methods \cite{carmon2017convex,xu2017neon+,allen2018neon2,jin2018accelerated,li2022restarted} have been studied to improve the complexity bound under an additional assumption, \cref{asm:hessian_lip}.%
\footnote{
  Note that although the assumption involves Hessians, they are only used in the analysis and not in the implementation of the above algorithms.
}
These methods can find an $\epsilon$-stationary point in only $O(\epsilon^{-7/4})$ or $\tilde O(\epsilon^{-7/4})$ function and gradient evaluations under \cref{asm:gradient_hessian_lip}.%
\footnote{
  The notation $\tilde O$ hides polylogarithmic factors in $\epsilon^{-1}$.
  For example, the method of \cite{jin2018accelerated} has a complexity bound of $O(\epsilon^{-7/4} (\log \epsilon^{-1})^6)$.
}
Although the methods are theoretically attractive, one faces challenges when applying them to real-world problems; they require prior knowledge of the Lipschitz constants $L_f$ and $M_f$ and the target accuracy $\epsilon$ to choose the algorithm's parameters.
For example, the existing accelerated gradient descent (AGD) methods \cite{jin2018accelerated,li2022restarted} for nonconvex problems set the step size dependently on $L_f$ and the acceleration parameter dependently on $L_f$, $M_f$, and $\epsilon$.
The values of $L_f$, $M_f$, and $\epsilon$ are unknown in many situations, which necessitates painstaking parameter tuning.

\begin{figure}[t]
  \centering
  \includegraphics[height=3.5ex]{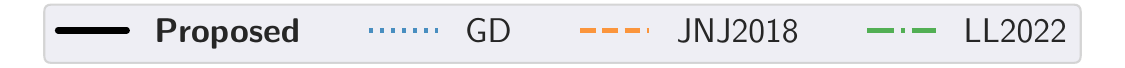}\par\smallskip%
  \includegraphics[width=0.33\linewidth]{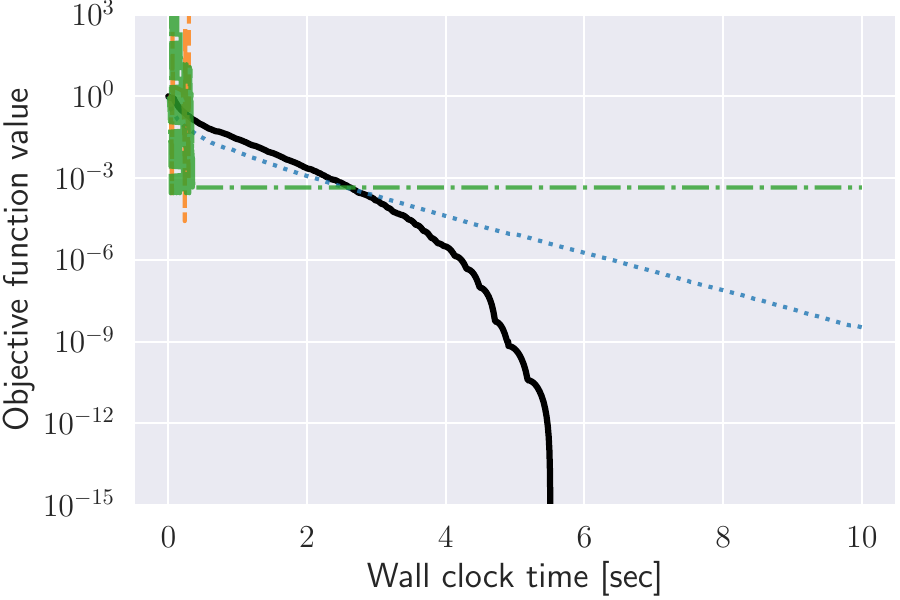}\hfill%
  \includegraphics[width=0.33\linewidth]{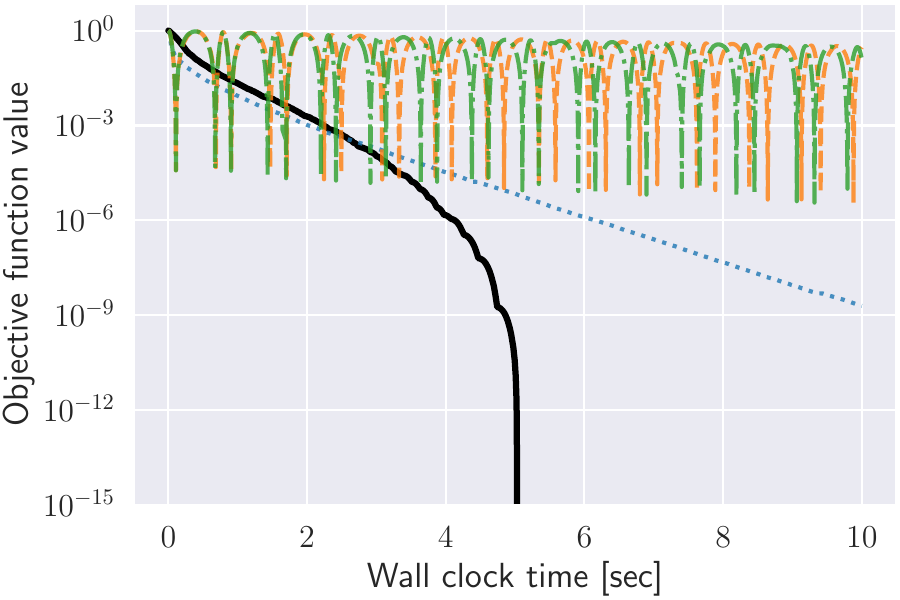}\hfill%
  \includegraphics[width=0.33\linewidth]{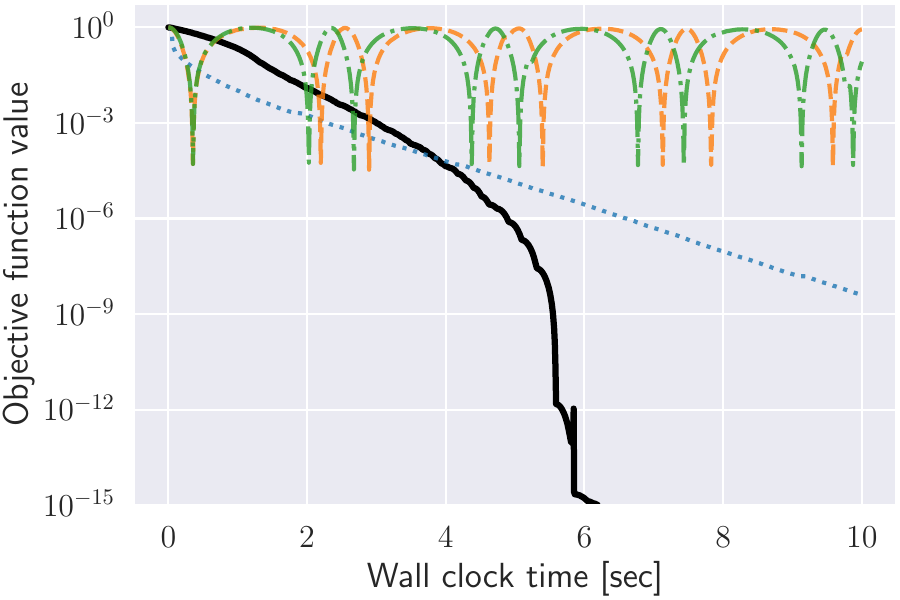}\par\medskip%
  \includegraphics[width=0.33\linewidth]{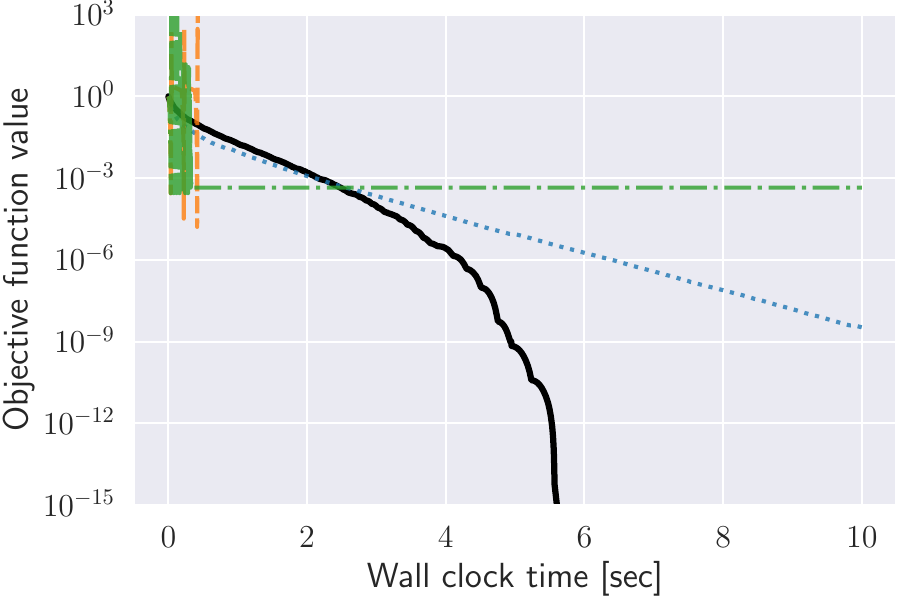}\hfill%
  \includegraphics[width=0.33\linewidth]{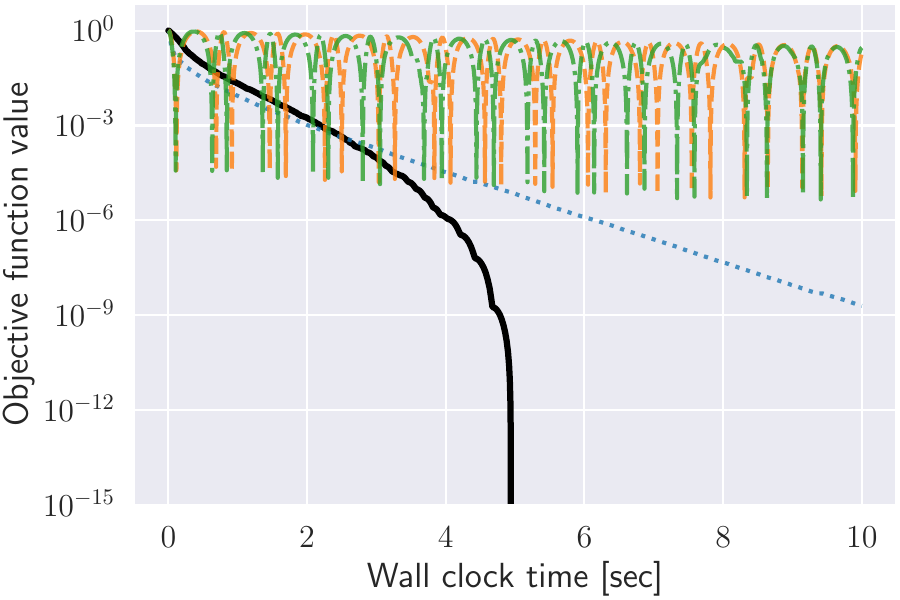}\hfill%
  \includegraphics[width=0.33\linewidth]{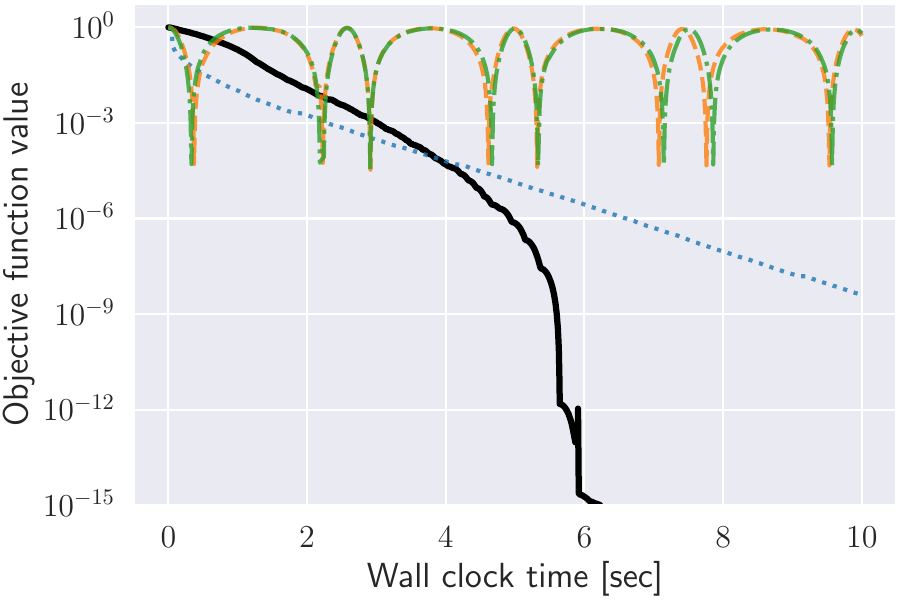}\par\medskip%
  \includegraphics[width=0.33\linewidth]{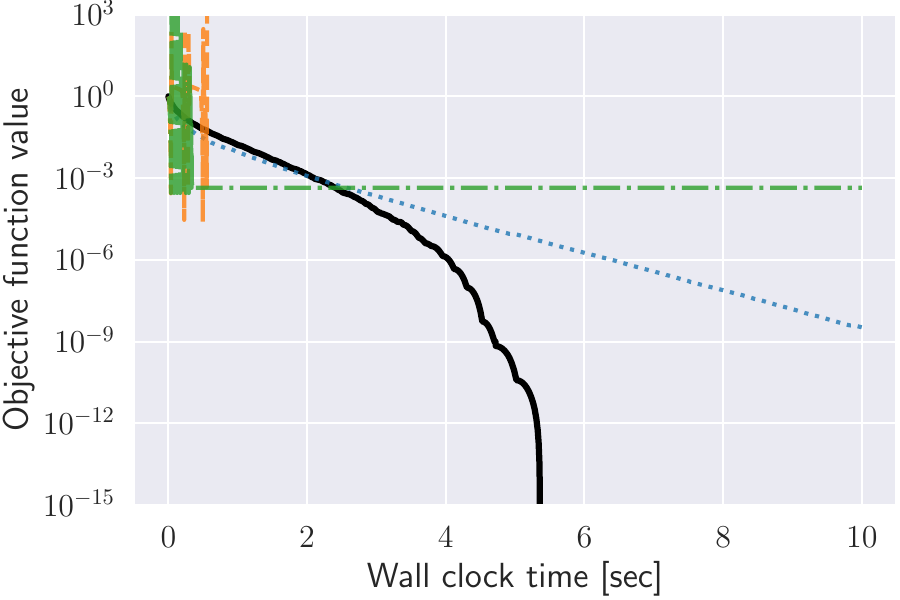}\hfill%
  \includegraphics[width=0.33\linewidth]{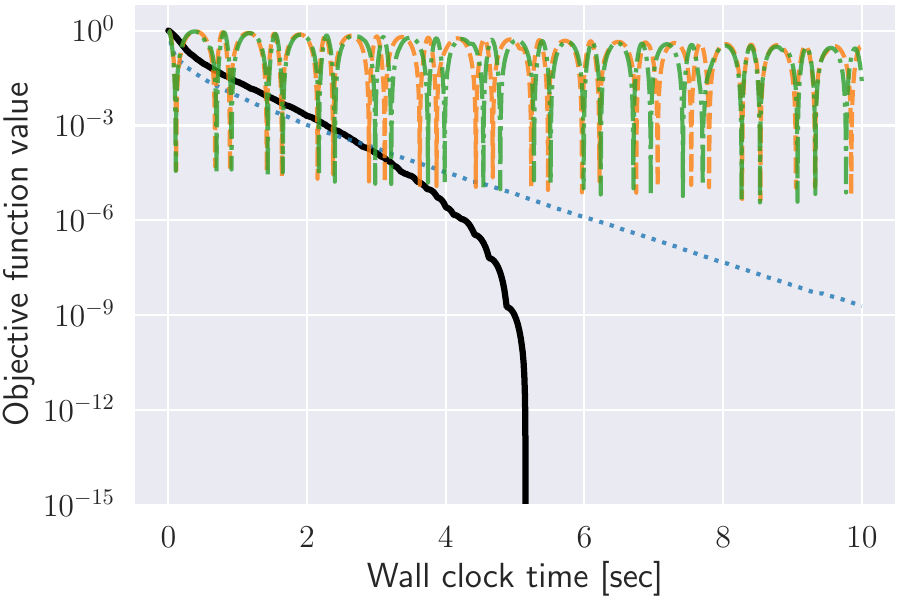}\hfill%
  \includegraphics[width=0.33\linewidth]{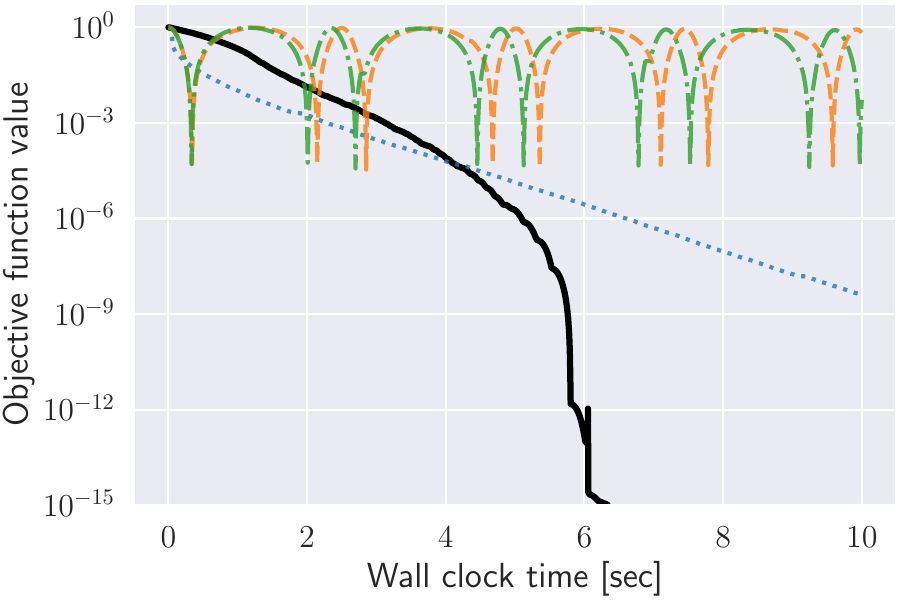}\par%
  \caption{%
    Minimization of the Rosenbrock function~\cite{rosenbrock1960automatic}: $\min_{(x, y) \in \R^2}\, (x - 1)^2 + 100 (y - x^2)^2$.
    \Proposed: \cref{alg:proposed_agd},
    \GD: gradient descent with Armijo-type backtracking,
    \JNJ: \cite[Algorithm~2]{jin2018accelerated},
    \LL: \cite[Algorithm~2]{li2022restarted}.
    All four methods require a guess of $L_f$, which is set to $10^2$, $10^3$, and $10^4$ (left to right).
    \Proposed, \JNJ, and \LL require a guess of $M_f$, which is set to $10^0$, $10^1$, and $10^2$ (top to bottom).
    \Proposed and \GD use the guesses as the initial estimate, while \JNJ and \LL use them as the true value.
  }
  \label{fig:rosenbrock_intro}
\end{figure}

To alleviate the problem, we propose a new first-order method.
The proposed method is an AGD equipped with two restart mechanisms and enjoys the following advantages.
\begin{itemize}
  \item
  Our method finds an $\epsilon$-stationary point in $O(\epsilon^{-7/4})$ function and gradient evaluations under \cref{asm:gradient_hessian_lip}.
  \item
  Our method estimates the Lipschitz constants $L_f$ and $M_f$ instead of requiring prior knowledge.
  It automatically updates the estimates, and their initial guesses are allowed to deviate from the actual Lipschitz constants, both in theory and practice.
  \item
  Our method also does not need the target accuracy $\epsilon$ as input, which implies its global convergence (\cref{cor:global_convergence}).
  \item
  We also conducted numerical experiments with several instances.
  The results show that the proposed method performs comparably to existing algorithms with similar complexity bounds, even without parameter tuning.
\end{itemize}
As shown in \cref{fig:rosenbrock_intro}, the proposed method successfully converged to a stationary point for a nonconvex test problem, regardless of the initial guesses of the Lipschitz constants, and it outperformed other state-of-the-art methods.

The main challenge in achieving the above advantages is estimating $M_f$ by using only first-order information, i.e., $f$ and $\nabla f$.
To this end, we establish a \emph{Hessian-free} analysis with two inequalities less familiar in the optimization context: a Jensen-type inequality for gradients and an error bound for the trapezoidal rule.
These two inequalities do not include the Hessian itself but rather its Lipschitz constant $M_f$, and they enable us to estimate $M_f$ and simplify the analysis.


\paragraph{Notation}
For vectors $a, b \in \R^d$, let $\inner{a}{b}$ denote the dot product and $\norm{a}$ denote the Euclidean norm.
For a matrix $A \in \R^{m \times n}$, let $\norm{A}$ denote the operator norm, or equivalently the largest singular value.

\section{Related work}
\label{sec:related_work}
This section reviews algorithms and techniques related to this work and characterizes our method in comparison with the existing methods.

\begin{table}[t]
  \centering
  \begin{threeparttable}
    \caption{
      Nonconvex optimization methods with complexity bounds of $O(\epsilon^{-7/4})$ or $\tilde O(\epsilon^{-7/4})$ under \cref{asm:gradient_hessian_lip}.
      The lower six methods are first-order methods that do not employ Hessian-vector products (HVP).
      The lower two methods have complexity bounds of $O(\epsilon^{-7/4})$, while the others have polylogarithmic factors, i.e., $\tilde O(\epsilon^{-7/4})$.
      The ``No knowledge'' columns indicate whether each algorithm requires prior knowledge of parameters to obtain the complexity bounds.
    }
    \label{table:existing_methods_and_ours}
    \small
    \def\arraystretch{1.2}
    \setlength{\tabcolsep}{0.5em}
    \begin{tabular}{@{}cccccccc@{}}\toprule
      \multirow{2}{*}{Reference}
      & \multirow{2}{*}{No HVP} & \multirow{2}{*}{\pbox[c]{12em}{Second-order\\stationary}} & \multirow{2}{*}{Deterministic} & \multirow{2}{*}{$O(\epsilon^{-7/4})$} & \multicolumn{3}{c}{No knowledge} \\\cmidrule(lr){6-8}
      \addlinespace[-0.7ex]                                                                                                                                                                                                               
      &                                                      &                                                                  &                                &                                       & $L_f$    & $M_f$    & $\epsilon$ \\\midrule
      \cite[Theorem~5.3]{carmon2018accelerated}
      &                         & \checked                                                         &                                &                                       &          &          &            \\
      \cite[Theorem~1]{agarwal2017finding}\tnote{a}
      &                         & \checked                                                         &                                &                                       &          &          &            \\
      \cite[Corollary~16]{royer2018complexity}
      &                         & \checked                                                         &                                &                                       & \checked & \checked &            \\
      \cite[Corollary~2]{royer2020newton}
      &                         & \checked                                                         &                                &                                       & \checked & \checked &            \\
      \cite[Corollary~1]{royer2020newton}
      &                         &                                                                  &  \checked                   &                                       & \checked & \checked &            \\\cmidrule{1-8}
      \cite[Theorem~2]{xu2017neon+}
      & \checked                & \checked                                                         &                                &                                       &          &          &            \\
      \cite[Theorem~4]{allen2018neon2}
      & \checked                & \checked                                                         &                                &                                       &          &          &            \\
      \cite[Theorem~8]{jin2018accelerated}
      & \checked                & \checked                                                         &                                &                                       &          &          &            \\
      \cite[Theorem~1]{carmon2017convex}
      & \checked                &                                                                  & \checked                       &                                       &          &          &            \\
      \cite[Theorem~2.2]{li2022restarted}
      & \checked                &                                                                  & \checked                       & \checked                              &          &          &            \\
      \textbf{This work}
      & \checked                &                                                                  & \checked                       & \checked                              & \checked & \checked & \checked   \\
      \bottomrule
    \end{tabular}
    \begin{tablenotes}
      \footnotesize
      \item[a] The conference paper \cite[Theorem~1]{agarwal2017finding} may seem to suggest that the result of \cite{agarwal2017finding} is deterministic. However, according to the full version (\url{https://arxiv.org/abs/1611.01146}), the proposed algorithm uses approximate PCA as a subroutine, which is a randomized algorithm, as stated in Theorem~2.5 in the full version. We thus treat \cite[Theorem~1]{agarwal2017finding} as a non-deterministic result.
    \end{tablenotes}
  \end{threeparttable}
\end{table}

\subsection{Methods with complexity of $O(\epsilon^{-7/4})$ or $\tilde O(\epsilon^{-7/4})$}
\cref{table:existing_methods_and_ours} summarizes the algorithms reviewed in this section.

Carmon et~al.~\cite{carmon2017convex} developed a first-order method that finds an $\epsilon$-stationary point in $\tilde O(\epsilon^{-7/4})$ function and gradient evaluations under \cref{asm:gradient_hessian_lip}.
Li and Lin~\cite{li2022restarted} further improved the complexity bound to $O(\epsilon^{-7/4})$ with a simpler algorithm.

There is another research stream aimed at finding a second-order $\epsilon$-stationary point, i.e., $x \in \R^d$ where $\norm*{\nabla f(x)} \leq \epsilon$ and $\nabla^2 f(x) \succeq - \sqrt{\epsilon} I$.
Some methods \cite{agarwal2017finding,carmon2018accelerated,royer2018complexity,royer2020newton} find second-order stationary points in $\tilde O(\epsilon^{-7/4})$ evaluations though they utilize Hessian-vector products and thus are not first-order methods.
Methods that achieve complexity bounds of $\tilde O(\epsilon^{-7/4})$ without Hessian-vector products also have been proposed \cite{xu2017neon+,allen2018neon2,jin2018accelerated}.
They can also be used to find a (first-order) stationary point but do not yield a complexity bound of $O(\epsilon^{-7/4})$ as in \cite{li2022restarted}.

Algorithms with such improved bounds can be complicated in that they call another iterative method to solve subproblems.
Some methods \cite{carmon2017convex,carmon2018accelerated,xu2017neon+,allen2018neon2} iteratively minimize $\ell_2$-regularized objectives by AGD for strongly convex functions, and other methods \cite{agarwal2017finding,royer2018complexity} solve other subproblems.
Jin et~al.~\cite{jin2018accelerated} obtained a simpler algorithm by removing the need for such a nested-loop structure.
The simplest one is arguably that of \cite{li2022restarted}; it is a classical AGD with a restart mechanism and does not use any other techniques of earlier algorithms, such as randomness, negative curvature descent, or minimizing surrogate functions.
Notably, the most straightforward algorithm achieves the best complexity bound, $O(\epsilon^{-7/4})$, for first-order stationary points.

All of the abovementioned methods assume a Lipschitz continuous gradient and Hessian.
Some of them \cite{carmon2018accelerated,agarwal2017finding,carmon2017convex,xu2017neon+,allen2018neon2,jin2018accelerated,li2022restarted} additionally assume that (estimates of) both Lipschitz constants $L_f$ and $M_f$ are given as input to achieve the desired complexity bounds.
If incorrect estimates are given, there is no guarantee of convergence, let alone complexity bounds, as shown in \cref{fig:rosenbrock_intro}.\footnote{
    With some modification, these methods~\cite{carmon2018accelerated,agarwal2017finding,carmon2017convex,xu2017neon+,allen2018neon2,jin2018accelerated,li2022restarted} might be improved to not require $L_f$ as input, for example, by incorporating the Armijo rule reviewed in \cref{sec:review_estimation_lipschitz}.
    However, it is non-trivial to modify them so as to use neither the actual value of $M_f$ nor Hessian-vector products, as discussed in \cref{sec:review_estimation_lipschitz,sec:preliminary}.
}
Other methods \cite{royer2018complexity,royer2020newton} relaxed this restriction; they do not require the value of $L_f$ and $M_f$ as input, though they require Hessian-vector products instead.
It should be noted that, unlike our method, their complexity has a worse dependence on $M_f$ than the methods \cite{carmon2018accelerated,agarwal2017finding,xu2017neon+,allen2018neon2,jin2018accelerated,carmon2017convex,li2022restarted} that require $L_f$ and $M_f$ as input.
Our first-order method is the first to achieve the complexity bound of $O(\epsilon^{-7/4})$ (or $\tilde O(\epsilon^{-7/4})$) without requiring Hessian-vector products or knowledge of the Lipschitz constants.

The existing methods require a target accuracy, $\epsilon$, as input; for example, AGD methods \cite{jin2018accelerated,li2022restarted} choose an acceleration parameter dependently on $\epsilon$.
In contrast, our algorithm does not require pre-determination of accuracy; in other words, when stopped at an arbitrary time, the algorithm is guaranteed to produce a solution with accuracy corresponding to the number of iterations up to that point.


\subsection{Estimation of Lipschitz constants}
\label{sec:review_estimation_lipschitz}
Estimating the Lipschitz constant $L_f$ of $\nabla f$ dates back at least to backtracking with the Armijo rule for gradient descent~\cite{armijo1966minimization}.
The method estimates $L_f$ on the basis of the following lemma and has a complexity bound of $O(\epsilon^{-2})$, the same as the case where $L_f$ is known.
\begin{lemma}
  \label{lem:eq:property_gradlip_obj}
  Under \cref{asm:gradient_lip}, the following holds for any $x, y \in \R^d$:
  \begin{align}
    f(x) - f(y)
    &\leq
    \inner*{\nabla f(y)}{x - y}
    + \frac{\Ltrue}{2} \norm*{x - y}^2.
  \end{align}
\end{lemma}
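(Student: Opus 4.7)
The plan is to prove the descent inequality via the fundamental theorem of calculus applied along the line segment joining $y$ and $x$. First, I would introduce the one-variable function $\varphi(t) = f(y + t(x-y))$ for $t \in [0,1]$, whose derivative is $\varphi'(t) = \inner*{\nabla f(y + t(x-y))}{x-y}$ by the chain rule. This immediately yields the integral representation
\begin{align}
f(x) - f(y) = \varphi(1) - \varphi(0) = \int_0^1 \inner*{\nabla f(y + t(x-y))}{x-y}\, dt.
\end{align}

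The key step is then to subtract $\inner*{\nabla f(y)}{x-y} = \int_0^1 \inner*{\nabla f(y)}{x-y}\, dt$ from both sides and combine the two integrands, obtaining
\begin{align}
f(x) - f(y) - \inner*{\nabla f(y)}{x-y} = \int_0^1 \inner*{\nabla f(y + t(x-y)) - \nabla f(y)}{x-y}\, dt.
\end{align}
Applying the Cauchy--Schwarz inequality pointwise inside the integral, and then invoking \cref{asm:gradient_lip} to bound $\norm*{\nabla f(y + t(x-y)) - \nabla f(y)} \leq \Ltrue\, t\, \norm*{x-y}$, the right-hand side is dominated by $\int_0^1 \Ltrue\, t\, \norm*{x-y}^2\, dt = \frac{\Ltrue}{2}\norm*{x-y}^2$, which is exactly the claim.

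Since this is the classical descent lemma, no substantial obstacle arises: the argument is essentially a three-line calculation once the integral representation is in place, and the only analytic input beyond the fundamental theorem of calculus is the Lipschitz bound supplied by the hypothesis. The mild technical appeals to the chain rule and to pulling the inner product through the integral are both justified by the continuity of $\nabla f$ that is implicit in \cref{asm:gradient_lip}.
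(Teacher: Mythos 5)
Your proof is correct; it is the standard argument via the fundamental theorem of calculus followed by Cauchy--Schwarz and the Lipschitz bound. The paper does not actually prove \cref{lem:eq:property_gradlip_obj}---it cites it as the classical descent lemma---so there is no alternative derivation to compare against, and the calculation you give is exactly the textbook one the authors implicitly rely on.
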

Because of their practical importance, first-order methods with Armijo-type or other estimation scheme have been thoroughly studied for both convex \cite{beck2009fast,scheinberg2014fast,calatroni2019backtracking} and nonconvex \cite{bonettini2016variable,boumal2018global,liang2021average} problems.
Our method also estimates $L_f$ by using backtracking, but with a different rule than the Armijo one to make it compatible with acceleration for nonconvex problems.

Estimating the Lipschitz constant $M_f$ of $\nabla^2 f$ has also been studied in convex \cite{jiang2020unified,carmon2022optimal,grapiglia2017regularized,cartis2012evaluation} and nonconvex \cite{cartis2011adaptive,cartis2011adaptive2,gould2012updating,royer2018complexity,grapiglia2017regularized,agarwal2021adaptive} settings.
However, these schemes employ Hessian matrices or Hessian-vector products and are beyond the scope of first-order methods.
Estimating $M_f$ only with first-order information has so far remained a challenging problem; here, we resolve it affirmatively.

\section{Two Hessian-free inequalities}
\label{sec:preliminary}
Estimating $M_f$ is nontrivial.
One might think of estimating it by backtracking with inequalities required for complexity analysis, like the Armijo rule.
However, a problem arises because the existing complexity analyses of first-order methods \cite{carmon2017convex,xu2017neon+,allen2018neon2,jin2018accelerated,li2022restarted} use inequalities involving the Hessian such as
\begin{align}
  &\norm{\nabla f(x) - \nabla f(y) - \nabla^2 f(y) (x - y)}
  \leq
  \dfrac{M_f}{2} \norm*{x - y}^2,\\
  &f(x) - f(y)
  \leq
  \inner{\nabla f(y)}{x - y} + \dfrac{1}{2} \inner*{\nabla^2 f(y) (x - y)}{x - y} + \dfrac{M_f}{6} \norm*{x - y}^3.
\end{align}
If Hessian-vector multiplication were permissible, the estimation of $M_f$ could be achieved through some backtracking-like technique with the above inequalities.
However, since we are here targeting a first-order method, we can only access function values and gradients and thus cannot estimate $M_f$ on the basis of existing analyses.
A new technique of complexity analysis should be developed to estimate $M_f$.

Instead of the above inequalities, we employ the following two Hessian-free inequalities to estimate $M_f$ and analyze the algorithm.
\begin{lemma}[Jensen-type inequality for gradients]
  \label{lem:gradient_jensen}
  Under \cref{asm:hessian_lip}, the following holds for any $z_1, \dots, z_n \in \R^d$ and $\lambda_1,\dots,\lambda_n \geq 0$ such that $\sum_{i=1}^n \lambda_i = 1$:
  \begin{align}
    \norm*{
      \nabla f \prn*{ \sum_{i=1}^n \lambda_i z_i }
      - \sum_{i=1}^n \lambda_i \nabla f \prn*{ z_i }
    }
    \leq
    \frac{\Mtrue}{2} \sum_{1 \leq i < j \leq n} \lambda_i \lambda_j \norm*{z_i - z_j}^2.
  \end{align}
\end{lemma}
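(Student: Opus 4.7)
The plan is to expand $\nabla f$ to second order around the barycenter $\bar z := \sum_{i=1}^n \lambda_i z_i$, exploit that the first-order Hessian term cancels after taking convex combinations, and then convert a variance-type expression into the stated pairwise form.

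First I would invoke the standard consequence of \cref{asm:hessian_lip}: for any $x, y \in \R^d$,
\begin{align}
  \norm*{\nabla f(x) - \nabla f(y) - \nabla^2 f(y)(x - y)}
  \leq \frac{\Mtrue}{2} \norm*{x - y}^2,
\end{align}
which follows from writing $\nabla f(x) - \nabla f(y) = \int_0^1 \nabla^2 f(y + t(x - y))(x - y)\, dt$ and applying \cref{asm:hessian_lip} inside the integral. Applying this with $x = z_i$ and $y = \bar z$ gives residuals $R_i := \nabla f(z_i) - \nabla f(\bar z) - \nabla^2 f(\bar z)(z_i - \bar z)$ satisfying $\norm{R_i} \leq \tfrac{\Mtrue}{2}\norm{z_i - \bar z}^2$.

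Next I would take the convex combination $\sum_{i=1}^n \lambda_i$ of these identities. Since $\sum_i \lambda_i = 1$ and $\sum_i \lambda_i (z_i - \bar z) = 0$, the constant and Hessian terms collapse, leaving
\begin{align}
  \sum_{i=1}^n \lambda_i \nabla f(z_i) - \nabla f(\bar z) = \sum_{i=1}^n \lambda_i R_i.
\end{align}
Taking norms and applying the triangle inequality yields
\begin{align}
  \norm*{\nabla f(\bar z) - \sum_{i=1}^n \lambda_i \nabla f(z_i)}
  \leq \frac{\Mtrue}{2} \sum_{i=1}^n \lambda_i \norm*{z_i - \bar z}^2.
\end{align}

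Finally, to match the form in the statement, I would use the classical variance identity
\begin{align}
  \sum_{i=1}^n \lambda_i \norm*{z_i - \bar z}^2
  = \sum_{i=1}^n \lambda_i \norm{z_i}^2 - \norm{\bar z}^2
  = \sum_{1 \leq i < j \leq n} \lambda_i \lambda_j \norm*{z_i - z_j}^2,
\end{align}
which one verifies by expanding $\tfrac{1}{2}\sum_{i,j} \lambda_i \lambda_j \norm{z_i - z_j}^2 = \sum_i \lambda_i \norm{z_i}^2 - \norm{\bar z}^2$ using $\sum_i \lambda_i = 1$. Substituting gives the claim.

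There is no real obstacle: the only conceptual step is recognizing that expanding around the barycenter $\bar z$ (rather than around one of the $z_i$) is what kills the Hessian term, and the rest is the variance identity. If one instead expanded around some $z_k$, the Hessian term would survive as $\nabla^2 f(z_k)\sum_i \lambda_i (z_i - z_k)$ and would not be $\norm{\cdot}$-controllable without further assumptions, so the choice of the expansion point is the one point worth highlighting.
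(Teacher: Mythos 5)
Your proof is essentially the same as the paper's: both expand $\nabla f$ around the barycenter $\bar z$, exploit that the Hessian term cancels because $\sum_i \lambda_i (z_i - \bar z) = 0$, bound the Taylor remainder by $\tfrac{\Mtrue}{2}\norm{z_i - \bar z}^2$ via \cref{asm:hessian_lip}, and finish with the variance identity. The only cosmetic difference is that you package the integral remainder into a residual $R_i$ and invoke the standard second-order Taylor bound, whereas the paper carries the explicit integral through to the final estimate.
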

\begin{lemma}[Error bound for trapezoidal rule]
  \label{lem:trapezoidal_rule_error}
  Under \cref{asm:hessian_lip}, the following holds for any $x, y \in \R^d$:
  \begin{align}
    f(x) - f(y)
    &\leq
    \frac{1}{2} \inner*{\nabla f(x) + \nabla f(y)}{x - y}
    + \frac{\Mtrue}{12} \norm*{x - y}^3.
  \end{align}
\end{lemma}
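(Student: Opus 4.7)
The plan is to apply the classical trapezoidal-rule error analysis to the scalar function $g(t) \coloneqq \inner{\nabla f(y + t(x-y))}{x-y}$ on $[0,1]$. By the fundamental theorem of calculus, $\int_0^1 g(t)\,dt = f(x) - f(y)$, while $\tfrac{1}{2}(g(0) + g(1)) = \tfrac{1}{2}\inner{\nabla f(x) + \nabla f(y)}{x-y}$. So the lemma reduces to showing that the trapezoidal error $E \coloneqq \int_0^1 g(t)\,dt - \tfrac{1}{2}(g(0) + g(1))$ satisfies $\abs{E} \leq \tfrac{\Mtrue}{12}\norm{x-y}^3$, which is actually a two-sided strengthening of the stated inequality.

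Next, I would integrate by parts using the antiderivative $t - \tfrac{1}{2}$ of the constant $1$ to obtain
\begin{align}
E = -\int_0^1 g'(t)\,\bigl(t - \tfrac{1}{2}\bigr)\,dt,
\end{align}
where $g'(t) = \inner{\nabla^2 f(y + t(x-y))(x-y)}{x-y}$ is well defined because \cref{asm:hessian_lip} presupposes the existence of $\nabla^2 f$. Since $\int_0^1 (t - \tfrac{1}{2})\,dt = 0$, I may subtract the constant $g'(\tfrac{1}{2})$ from $g'(t)$ at no cost, rewriting $E = -\int_0^1 \bigl(g'(t) - g'(\tfrac{1}{2})\bigr)\,\bigl(t - \tfrac{1}{2}\bigr)\,dt$.

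Finally, the Lipschitz property of $\nabla^2 f$ furnishes the pointwise bound $\abs{g'(t) - g'(\tfrac{1}{2})} \leq \Mtrue \abs{t - \tfrac{1}{2}} \norm{x-y}^3$, after which taking absolute values inside the integral leaves only the elementary computation $\int_0^1 (t - \tfrac{1}{2})^2\,dt = \tfrac{1}{12}$, completing the argument.

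The main obstacle is obtaining the sharp constant $\tfrac{1}{12}$. A direct approach that Taylor-expands both $f(x) - f(y)$ and $\inner{\nabla f(x) - \nabla f(y)}{x - y}$ to second order around $y$ and applies the triangle inequality to the two third-order remainders yields only the weaker constant $\tfrac{5}{12}$. The crucial observation making the argument tight is that $t - \tfrac{1}{2}$ has zero mean on $[0,1]$, so the trapezoidal error depends on the oscillation of $g'$ across the interval rather than on its pointwise magnitude, and this oscillation is exactly what the Lipschitz constant $\Mtrue$ controls.
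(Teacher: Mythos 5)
Your proof is correct and gives the stated sharp constant, but it takes a genuinely different route from the paper. The paper derives the inequality by applying Taylor's theorem to write $f(x) - f(y) - \tfrac{1}{2}\inner*{\nabla f(x) + \nabla f(y)}{x-y} = \int_0^1 \inner*{\nabla f(tx + (1-t)y) - t\nabla f(x) - (1-t)\nabla f(y)}{x-y}\,dt$, then uses Cauchy--Schwarz followed by its own \cref{lem:gradient_jensen} (with $n=2$) to bound the integrand norm by $\tfrac{\Mtrue}{2}\,t(1-t)\norm{x-y}^2\cdot\norm{x-y}$, and finally integrates $\int_0^1 t(1-t)\,dt = \tfrac{1}{6}$. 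You instead use the classical Peano-kernel argument: integrate by parts against the zero-mean kernel $t - \tfrac12$, subtract the midpoint value $g'(\tfrac12)$ for free, apply the Hessian Lipschitz bound directly to $g'$, and integrate $\int_0^1 (t-\tfrac12)^2\,dt = \tfrac{1}{12}$. Both routes use Taylor/the Hessian and are equally tight; the paper's version is economical within the paper because it piggybacks on \cref{lem:gradient_jensen}, which is already established and central to the algorithmic analysis, whereas yours is self-contained (it does not invoke \cref{lem:gradient_jensen}) and is essentially the textbook derivation of the trapezoidal error bound, making explicit the cancellation that your final paragraph correctly identifies as the source of the improvement from $\tfrac{5}{12}$ to $\tfrac{1}{12}$.
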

To make the paper self-contained, we present the proofs in \cref{sec:proof_lem_lip_hessian}.
These two inequalities enable us not only to estimate the Hessian's Lipschitz constant but also to make the analysis more straightforward (see the last paragraph of \cref{sec:discussion} for a detailed discussion).
Though these inequalities are not new, using them in complexity analyses of optimization methods is new, to the best of our knowledge.

\cref{lem:gradient_jensen} shows that if $z_1,\dots,z_n$ are close together, then the gradient at a weighted average $\sum_{i=1}^n \lambda_i z_i$ can be approximated by the average of the gradients at $z_1,\dots,z_n$.
This lemma is analogous to Jensen's inequality, $\phi(\sum_{i=1}^n \lambda_i z_i) - \sum_{i=1}^n \lambda_i \phi(z_i) \leq 0$ for any convex function $\phi: \R^d \to \R$.
In convex optimization, Jensen's inequality is often used to show that the objective value at an averaged solution is small \cite{duchi2012ergodic,zhang2013linear,davis2016convergence,rakhlin2012making,chambolle2016ergodic}.
In contrast, our nonconvex analysis will use \cref{lem:gradient_jensen} to prove that the gradient norm at an averaged solution is small.
An inequality similar to \cref{lem:gradient_jensen} can be found in \cite[Corollary~2]{zlobec2004jensen}.

\cref{lem:trapezoidal_rule_error} is similar to \cref{lem:eq:property_gradlip_obj}; a notable difference is that the last term of \cref{lem:trapezoidal_rule_error} is not $O(\norm{x - y}^2)$ but rather $O(\norm{x - y}^3)$, which is a significant improvement when $\norm{x - y}$ is small.
The one-dimensional version ($d = 1$) of \cref{lem:trapezoidal_rule_error} is known as an error bound for the trapezoidal rule in the numerical analysis community (e.g., \cite[Eq.~(5.1.4)]{atkinson2008introduction}).

\section{Algorithm}

The proposed method is the classical accelerated gradient descent (AGD) \cite{nesterov1983method,nesterov2018lectures} with two restart schemes, presented in \cref{alg:proposed_agd}.
In the algorithm, the iteration counter $k$ is reset to $0$ when AGD restarts on Line \ref{alg-line-agd:restart_unsuccessful} or \ref{alg-line-agd:restart_successful}, whereas the total iteration counter $K$ is not.
We refer to the period between a reset of $k$ and the next reset as an \emph{epoch}.
Note that the total iteration counter $K$ in the algorithm is unnecessary to implement; it is included here only to make the statements in our analysis concise.

The algorithm uses estimates $L$ and $M_k$ for the Lipschitz constants $L_f$ and $M_f$.
The estimate $L$ is fixed during an epoch, while $M_k$ is updated at each iteration, having the subscript $k$.
Now let us describe each operation of \cref{alg:proposed_agd}.

\paragraph{Solution update (Lines \ref{alg-line-agd:update_x} and \ref{alg-line-agd:update_y})}
With an estimate $L$ of the Lipschitz constant $L_f$, AGD defines solution sequences $(x_k)$ and $(y_k)$ as follows:
$y_0 = x_0$ and
\begin{align}
  \begin{aligned}
    x_k
    &=
    y_{k-1} - \frac{1}{L} \nabla f(y_{k-1}),\\
    y_k
    &=
    x_k + \theta_k (x_k - x_{k-1})
  \end{aligned}
  \label{eq:update_xy}
\end{align}
for $k \geq 1$, where $0 \leq \theta_k \leq 1$ is the acceleration parameter.
Let $x_{-1} \coloneqq x_0$ for convenience, which makes the lower equation of \cref{eq:update_xy} valid for $k = 0$.
Setting a suitable value of $\theta_k$ is crucial for deriving an improved complexity bound over that of gradient descent.
In particular, we propose to set
\begin{align}
  \theta_k = \frac{k}{k+1}
  \label{eq:def_theta_k}
\end{align}
for $k \geq 1$.
This choice significantly differs from $\theta_k = 1 - (M_f \epsilon)^{1/4} / (4 \sqrt{L_f})$ \cite{jin2018accelerated} and $\theta_k = 1 - 2 (M_f \epsilon)^{1/4} / \sqrt{L_f}$ \cite{li2022restarted} as were proposed for AGD under \cref{asm:gradient_hessian_lip}.
Our $\theta_k$ does not depend on the true Lipschitz constants $L_f$ and $M_f$ or their estimates.
This independence will help later when computing an estimate of $M_f$.

\begin{algorithm}[t]
  \caption{Proposed AGD with restart~\label{alg:proposed_agd}} 
  \begin{algorithmic}[1]
    \Require{%
      \pbox[t]{40em}{
        $x_{\mathrm{init}} \in \R^d$;
        $\Linit, M_0 > 0$;
        $\alpha > 1$;
        $0 < \beta \leq 1$\\
        Recommended: $(\Linit, M_0, \alpha, \beta) = (10^{-3}, 10^{-16}, 2, 0.9)$
      }
    }
    \State{%
      $(x_0, y_0) \gets (x_{\mathrm{init}}, x_{\mathrm{init}})$,
      $\Lest \gets \Linit$,
      $k \gets 0$,
      $K \gets 0$
    }
    \Repeat
      \State{%
        $k \gets k + 1$,
        $K \gets K + 1$
      }
      \State $x_k \gets y_{k-1} - \frac{1}{\Lest} \nabla f(y_{k-1})$
      \label{alg-line-agd:update_x}
      \State $y_k \gets x_k + \frac{k}{k+1} (x_k - x_{k-1})$
      \label{alg-line-agd:update_y}

      \If{$f(x_k) > f(x_0) - \frac{\Lest S_k}{2 (k+1)}$}
      \Comment{$S_k \coloneqq \sum_{i=1}^k \norm*{x_i - x_{i-1}}^2$}
      \label{alg-line-agd:check_decrease}
        \State{%
          $(x_0, y_0) \gets (x_{k-1}, x_{k-1})$,
          $\Lest \gets \alpha \Lest$,
          $k \gets 0$
        }
        \label{alg-line-agd:restart_unsuccessful}
      \ElsIf{$(k+1)^5 \Mest_k^2 S_k > \Lest^2$}
      \Comment{$\Mest_k$ is defined by \cref{eq:update_Mest_practical} or \cref{eq:update_Mest}}
      \label{alg-line-agd:check_large_Sk}
        \State{%
          $(x_0, y_0) \gets (x_k, x_k)$,
          $\Lest \gets \beta \Lest$,
          $k \gets 0$
        }
        \label{alg-line-agd:restart_successful}
      \EndIf
    \Until{convergence}
    \State \Return averaged solution $\bar y_k$ defined by \eqref{eq:def_ybark}
  \end{algorithmic}
\end{algorithm}

\paragraph{Restart mechanism 1 (Lines \ref{alg-line-agd:check_decrease} and \ref{alg-line-agd:restart_unsuccessful})}
After updating the solutions, \cref{alg:proposed_agd} checks whether the current estimate $L$ approximates the actual value $L_f$ well.
If
\begin{align}
  f(x_k) - f(x_0)
  \leq
  - \frac{1 - \theta_k}{2} \Lest S_k
  =
  - \frac{\Lest S_k}{2 (k+1)},
  \quad\text{where}\quad
  S_k
  \coloneqq
  \sum_{i=1}^k \norm*{x_i - x_{i-1}}^2,
  \label{eq:descent_condition}
\end{align}
then $L$ is judged suitable, and the epoch continues; otherwise, AGD restarts from $x_{k-1}$ with a larger $L$.
Since condition~\cref{eq:descent_condition} holds in the previous iteration, starting the next epoch from $x_{k-1}$ ensures that this epoch does not increase the objective function value.
Condition~\cref{eq:descent_condition} guarantees a sufficient decrease in the objective function in the epoch and plays a crucial role in our analysis.
As will be shown later, this condition is always satisfied when $L$ is sufficiently large, which justifies the restart mechanism.

\begin{remark}
  This restart mechanism can be regarded as a kind of backtracking.
  A well-known backtracking method for estimating $L_f$ is based on the Armijo rule \cite{armijo1966minimization}, and one might think of using the Armijo rule or its variant \cite[Section~5.3]{becker2011templates} instead of condition~\cref{eq:descent_condition}.
  However, with the Armijo rule, the estimate $L$ changes at each iteration, complicating the complexity analysis of AGD, especially for the nonconvex case.
  In contrast, we fix $L$ through the epoch with the help of condition~\cref{eq:descent_condition}, preserving the simplicity of the existing analysis when $L_f$ is known.
\end{remark}

\paragraph{Restart mechanism 2 (Lines \ref{alg-line-agd:check_large_Sk} and \ref{alg-line-agd:restart_successful})}
Our AGD also resets the effect of acceleration by restart when $S_k$ becomes large; the restart condition involves an estimate $M_k$ of the Hessian's Lipschitz constant $M_f$ (detailed later).
If
\begin{align}
  \Mest_k^2 S_k
  \leq
  \Lest^2 (1 - \theta_k)^5
  =
  \frac{\Lest^2}{(k+1)^5},
  \label{eq:condition_continue}
\end{align}
the epoch continues, otherwise, AGD restarts with the computed solution $x_k$ and a (possibly) smaller $L$.
Condition~\cref{eq:condition_continue} is inspired by a condition in \cite{li2022restarted}, $k M_f S_k \leq \epsilon$; unlike the existing one, our condition does not involve $M_f$ or $\epsilon$.
Although $L$ does not have to be decreased on Line~\ref{alg-line-agd:restart_successful} to derive the complexity bound of $O(\epsilon^{-7/4})$, decreasing $L$ will help the algorithm to capture the local curvature of $f$ and to converge faster empirically.

\paragraph{Averaged solution}
\label{sec:averaged_solution}
Our AGD guarantees that the gradient norm is small at a weighted average $\bar y_k$ of the solutions $y_0,\dots,y_{k-1}$.
The averaged solution $\bar y_k$ is defined by
\begin{align}
  \bar y_k
  \coloneqq
  \sum_{i=0}^{k-1} p_{k,i} y_i,
  \quad\text{where}\quad
  p_{k,i}
  \coloneqq
  \frac{1}{Z_k} \prod_{j=i+1}^{k-1} \theta_j,
  \quad
  Z_k
  \coloneqq
  \sum_{i=0}^{k-1} \prod_{j=i+1}^{k-1} \theta_j.
  \label{eq:def_ybark}
\end{align}
Note that it is unnecessary to keep all $y_0,\dots,y_{k-1}$ in memory to compute $\bar y_k$ because $Z_k$ and $\bar y_k$ satisfy a simple recursion:
\begin{align}
  Z_{k+1}
  &=
  1 + \theta_k Z_k,\quad
  \bar y_{k+1}
  =
  \frac{1}{Z_{k+1}} \prn*{y_k + \theta_k Z_k \bar y_k},
\end{align}
which maintains the computational efficiency of the algorithm.
Given \cref{eq:def_theta_k}, there is another form:
\begin{align}
  \bar y_k
  =
  \frac{2}{k (k + 1)}
  \sum_{i=0}^{k-1} (i + 1) y_i,\quad
  Z_k
  =
  \frac{k + 1}{2},
  \label{eq:ybar_Zk_alternative}
\end{align}
and this simplification of $\bar y_k$ and $Z_k$ is one of the aims of setting $\theta_k$ as in \cref{eq:def_theta_k}.

\paragraph{Estimate of Hessian's Lipschitz constant}
The estimate $M_k$ for the Hessian is updated at each iteration, unlike $L$ for the gradient.
Our complexity analysis requires three inequalities on $M_k$: $M_{k-1} \leq M_k$ and
\begin{align}
  &
  f(y_k) - f(x_k)
  \leq
  \frac{1}{2} \inner*{\nabla f(y_k) + \nabla f(x_k)}{y_k - x_k}
  + \frac{\Mest_k}{12} \norm*{y_k - x_k}^3,
  \label{eq:fy_fx_diff_upperbound_Mestk}\\
  &
  \norm*{
    \nabla f(y_k)
    + \theta_k \nabla f(x_{k-1})
    - (1 + \theta_k) \nabla f (x_k)
  }
  \leq
  \theta_k \Mest_k \norm*{x_k - x_{k-1}}^2.
  \label{eq:grad_interpolation_error_Mestk}
\end{align}
Finding the smallest $M_k$ that satisfies these inequalities is desirable, and fortunately, it is straightforward,
\begin{align}
  \label{eq:update_Mest_practical}
  \Mest_k
  =
  \max \bigg\{
    \Mest_{k-1},\ 
    &
    \frac{
      12
      \prn*{
        f(y_k) - f(x_k) 
        - \frac{1}{2} \inner*{\nabla f(y_k) + \nabla f(x_k)}{y_k - x_k}
      }
    }{\norm*{y_k - x_k}^3},\\
    &
    \frac{
      \norm*{
        \nabla f(y_k)
        + \theta_k \nabla f(x_{k-1})
        - (1 + \theta_k) \nabla f (x_k)
      }
    }{\theta_k \norm*{x_k - x_{k-1}}^2}
  \bigg\}
\end{align}
because $x_k$, $y_k$, and $\theta_k$ are already in hand.
In other words, our $\theta_k$ in \cref{eq:def_theta_k} does not depend on the estimate $M_k$ of $M_f$, which facilitates computation of $M_k$.
Thus, we can estimate $M_f$ without backtracking, thereby simplifying the algorithm and improving its efficiency.

Instead of \cref{eq:update_Mest_practical}, we can use
\begin{align}
  \label{eq:update_Mest}
  \Mest_k
  =
  \max \bigg\{
    \Mest_{k-1},\ 
    &
    \frac{
      12
      \prn*{
        f(y_k) - f(x_k) 
        - \frac{1}{2} \inner*{\nabla f(y_k) + \nabla f(x_k)}{y_k - x_k}
      }
    }{\norm*{y_k - x_k}^3}
    ,\\
    &
    \frac{
      \norm*{
        \nabla f(y_k)
        + \theta_k \nabla f(x_{k-1})
        - (1 + \theta_k) \nabla f (x_k)
      }
    }{\theta_k \norm*{x_k - x_{k-1}}^2},\\
    &
    \frac{
      16
      \prn*{
        Z_k^2 \norm*{ \nabla f (\bar y_k) }
        - Z_k \Lest \norm*{x_k - x_{k-1}}
      }
    }{(k-1) (k+5)^2 S_k}
  \bigg\}.
\end{align}
in the algorithm.\footnote{
  For $k = 1$, \Cref{eq:update_Mest} takes the form $M_1 = \max \set{M_0, \ast, \ast, \frac{0}{0}}$, which should be treated as $M_1 = \max \set{M_0, \ast, \ast}$.
}
This replacement ensures that
\begin{align}
  \norm*{ \nabla f (\bar y_k) }
  \leq
  \frac{\Lest}{Z_k} \norm*{x_k - x_{k-1}}
  + \frac{(k-1) (k+5)^2}{16 Z_k^2} \Mest_k S_k
  \label{eq:grad_ybark_upperbound_Mestk}
\end{align}
and will lead to a smaller complexity bound with lessened dependence on $M_0$ and $M_f$, as will be confirmed later in \cref{thm:complexity}.
\Cref{eq:update_Mest} derives a theoretically better bound than \cref{eq:update_Mest_practical} but requires an additional gradient evaluation at $\bar y_k$, increasing the computational cost per iteration.
Therefore, we recommend using \cref{eq:update_Mest_practical} in practice.

\section{Complexity analysis}
\label{sec:analysis}
This section provides the complexity bounds of $O(\epsilon^{-7/4})$ for \cref{alg:proposed_agd}.

\subsection{Upper bound on $L$}
The estimates $L$ and $M_k$ of the Lipschitz constants should be large enough to satisfy some technical inequalities such as \cref{eq:descent_condition,eq:fy_fx_diff_upperbound_Mestk,eq:grad_interpolation_error_Mestk}, but they should not be too large.
This section proves the following upper bound on $L$.
\begin{proposition}
  \label{prop:L_upperbound}
  Suppose that \cref{asm:gradient_lip} holds.
  Then, the following is true throughout \cref{alg:proposed_agd}: $\Lest \leq \max\set{\Linit, \alpha \Ltrue}$.
\end{proposition}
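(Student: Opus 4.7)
The plan is to argue that $\Lest$ can only increase through the $\alpha$-multiplication on \cref{alg-line-agd:restart_unsuccessful}, and that this multiplication fires only when $\Lest < \Ltrue$. A short induction on the total iteration count $K$ then yields the claim.

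First I would observe that $\Lest$ is modified only on \cref{alg-line-agd:restart_unsuccessful,alg-line-agd:restart_successful}, and the latter multiplies by $\beta \in (0,1]$, which never increases $\Lest$. So it suffices to control the $\alpha$-multiplications on \cref{alg-line-agd:restart_unsuccessful}. Inside any single epoch $\Lest$ is held fixed, so the key sub-claim I would establish is: \emph{if an epoch starts with $\Lest \geq \Ltrue$, then the inequality $f(x_k) \leq f(x_0) - \frac{\Lest S_k}{2(k+1)}$ of \cref{alg-line-agd:check_decrease} holds at every iteration $k$ of that epoch}. Assuming this, the $\alpha$-multiplication never fires in such an epoch. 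By contraposition, whenever $\Lest \gets \alpha \Lest$ does fire, the pre-multiplication value satisfies $\Lest < \Ltrue$, and hence the post-multiplication value is at most $\alpha \Ltrue$. Since the $\beta$-multiplications between two consecutive $\alpha$-events only decrease $\Lest$, the value stays at most $\alpha \Ltrue$ in between. Combined with the initial value $\Linit$ before any modification, a straightforward induction on $K$ gives $\Lest \leq \max\{\Linit, \alpha \Ltrue\}$ throughout the run.

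The hard part will be the sub-claim. I would start from \cref{lem:eq:property_gradlip_obj}: since $\Lest \geq \Ltrue$, every inner update produces the per-step bound $f(x_i) \leq f(y_{i-1}) - \frac{\Lest}{2}\norm{x_i - y_{i-1}}^2$. To upgrade this to a cumulative bound with prefactor $\frac{1}{k+1}$ rather than the naive $\frac{1}{2}$ one gets in the momentum-free setting, I would design a Lyapunov functional tailored to the specific schedule $\theta_i = i/(i+1)$. Using the identities $1 - \theta_i = 1/(i+1)$, $y_{i-1} - x_{i-1} = \theta_{i-1}(x_{i-1} - x_{i-2})$, and $x_i - y_{i-1} = -\nabla f(y_{i-1})/\Lest$, I expect the per-step descents to telescope, with the cross-terms involving $\langle \nabla f(y_{i-1}),\, x_{i-1} - x_{i-2}\rangle$ reorganizing so as to produce exactly the weight $1/(k+1)$. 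The appearance of $1/(k+1)$ rather than $1/2$ is natural, since it amounts to averaging the per-step descent over the $k$ accelerated iterates rather than lower-bounding each step in isolation; this slackness is precisely what lets the condition survive the momentum term even in the nonconvex regime.
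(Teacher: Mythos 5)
Your top-level reduction is correct and matches the paper's own approach: the paper reduces \cref{prop:L_upperbound} exactly to the claim you state as your sub-claim, namely \cref{lem:decrease_epoch} (during epochs with $\Lest \geq \Ltrue$, condition~\cref{eq:descent_condition} holds at every iteration). Given that, your contrapositive argument plus induction on $K$ is the same bookkeeping the paper does implicitly when it says the proposition ``immediately follows.''

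The gap is in your sketch of the sub-claim. You propose to derive it from \cref{lem:eq:property_gradlip_obj} alone, via a Lyapunov functional that makes the cross-terms $\inner{\nabla f(y_{i-1})}{x_{i-1}-x_{i-2}}$ ``telescope.'' That cannot work in the nonconvex setting: those inner products have no sign, do not telescope, and the gradient-Lipschitz hypothesis gives no handle on them (the objective can genuinely increase along the momentum direction). The paper's proof closes only because of two ingredients your sketch never invokes. First, the estimate $\Mest_k$ is constructed (via \cref{eq:update_Mest_practical} or \cref{eq:update_Mest}) so that the Hessian-free inequalities \cref{eq:fy_fx_diff_upperbound_Mestk} and \cref{eq:grad_interpolation_error_Mestk} hold \emph{by definition of} $\Mest_k$ --- not by \cref{asm:hessian_lip}, which is why \cref{lem:decrease_epoch} only needs \cref{asm:gradient_lip}. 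These inequalities convert the uncontrollable cross-terms into cubic and quartic error terms $\Mest_k\norm{x_k-x_{k-1}}^3$ and $\Mest_k^2\norm{x_k-x_{k-1}}^4$ in the potential decrease \cref{lem:potential_decrease_iteration}. Second, summing over the epoch, those error terms are bounded using the fact that the epoch has not yet triggered the second restart, i.e.\ $\Mest_{k-1}^2 S_{k-1} \leq \Lest^2/k^5$ (\cref{eq:condition_continue_k-1}); this is what makes $\frac{7}{12}\Mest_{k-1}S_{k-1}^{3/2} + \frac{1}{4\Lest}\Mest_{k-1}^2 S_{k-1}^2 \leq \frac{1}{4}\theta_{k-1}(1-\theta_{k-1})\Lest S_k$ and lets the $\frac{\theta_k^2+\theta_{k-1}-2}{4}\Lest S_k$ term absorb everything to give $-\frac{1-\theta_k}{2}\Lest S_k$. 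Without restart mechanism 2 and the $\Mest_k$ construction, the $1/(k+1)$ slack in \cref{eq:descent_condition} is not by itself enough to make the condition survive the momentum term.
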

This proposition immediately follows from the following lemma.
\begin{lemma}
  \label{lem:decrease_epoch}
  Suppose that \cref{asm:gradient_lip} holds.
  During epochs with $\Lest \geq \Ltrue$, condition~\cref{eq:descent_condition} always holds.
\end{lemma}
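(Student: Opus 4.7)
The plan is to prove condition~\eqref{eq:descent_condition} by induction on $k \geq 0$ within the epoch, using the auxiliary potential
\[
U_k \coloneqq (k+1)\bra*{f(x_k) - f(x_0)} + \frac{L}{2} S_k,
\]
so that \eqref{eq:descent_condition} is precisely the statement $U_k \leq 0$. The base case $U_0 = 0$ is immediate from $x_0 = y_0$ and $S_0 = 0$. For $k = 1$, the descent lemma applied to $x_1 = y_0 - \frac{1}{L}\nabla f(y_0)$ together with $y_0 = x_0$ gives $f(x_1) - f(x_0) \leq -\frac{L}{2}\norm{x_1 - x_0}^2$, which is already stronger than $U_1 \leq 0$.

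For the inductive step from $k$ to $k+1$, I would first apply \cref{lem:eq:property_gradlip_obj} to the gradient step $x_{k+1} = y_k - \frac{1}{L}\nabla f(y_k)$. Since $L \geq \Ltrue$, this yields
\[
f(x_{k+1}) - f(y_k) \leq -\frac{L}{2}\norm{x_{k+1} - y_k}^2.
\]
I would then expand $f(y_k)$ around $x_k$ via the same Lipschitz quadratic bound, exploiting $y_k - x_k = \theta_k(x_k - x_{k-1})$, to produce a bound on $f(x_{k+1}) - f(x_k)$ containing a cross term $\theta_k \inner{\nabla f(x_k)}{x_k - x_{k-1}}$ plus quadratic expressions in $v_i \coloneqq x_i - x_{i-1}$.

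Next, I would control that cross term by invoking the previous gradient step identity $\nabla f(y_{k-1}) = L(y_{k-1} - x_k) = L(\theta_{k-1} v_{k-1} - v_k)$, writing $\nabla f(x_k) = \nabla f(y_{k-1}) + (\nabla f(x_k) - \nabla f(y_{k-1}))$ and bounding the discrepancy by $\Ltrue \norm{x_k - y_{k-1}}$ via \cref{asm:gradient_lip}. Combined with Cauchy--Schwarz (or a suitably weighted Young inequality) and $L \geq \Ltrue$, this decomposes $\inner{\nabla f(x_k)}{v_k}$ into a dominant $-L \norm{v_k}^2$, a momentum contribution $L\theta_{k-1}\inner{v_{k-1}}{v_k}$, and a residual absorbable into the quadratic descent term $\frac{L}{2}\norm{x_{k+1} - y_k}^2$.

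Assembling these estimates, I would establish the one-step relation $U_{k+1} \leq \frac{k+2}{k+1} U_k$, which together with the induction hypothesis $U_k \leq 0$ closes the induction and gives $U_{k+1} \leq 0$. The specific choice $\theta_k = k/(k+1)$ is what makes the momentum cross terms telescope so that the coefficient $\frac{L S_k}{2(k+1)}$ in the definition of $U_k$ is exactly what is required. The main obstacle, I expect, is the tight coefficient bookkeeping: because $\frac{1}{2(k+1)}$ in \eqref{eq:descent_condition} leaves no slack, the Lipschitz residual produced by replacing $\nabla f(x_k)$ with $\nabla f(y_{k-1})$ must be absorbed exactly by $\frac{L}{2}\norm{x_{k+1} - y_k}^2$ and by the accumulated slack from prior steps (encoded via $U_j \leq 0$ for $j \leq k$). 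It is precisely this tight balance that forces the choice $\theta_k = k/(k+1)$ rather than some other schedule.
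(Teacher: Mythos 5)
Your proposal has a genuine gap: it never invokes condition~\cref{eq:condition_continue_k-1} (the fact that restart mechanism~2 did not fire at iteration $k-1$), which is the crux of the paper's argument. The paper's proof controls the momentum-induced error terms by writing them as $O(\Mest_{k-1} S_{k-1}^{3/2})$ and $O(\Mest_{k-1}^2 S_{k-1}^2)$ (cubic and quartic in the step sizes) and then using $\Mest_{k-1}^2 S_{k-1} \le \Lest^2/k^5$ to make them tiny relative to the available slack $\tfrac{1-\theta_k}{2}\Lest S_k = O(\Lest S_k/k)$. Your Lipschitz-gradient-only manipulations cannot produce cubic or quartic error terms: replacing $\nabla f(x_k)$ by $\nabla f(y_{k-1})$ and bounding the discrepancy by $\Ltrue\norm{x_k - y_{k-1}}$ yields a residual $\inner{\nabla f(x_k)-\nabla f(y_{k-1})}{v_k}$ of size $\Ltrue\norm{\theta_{k-1}v_{k-1}-v_k}\norm{v_k} = O(L\norm{v_k}^2 + L\norm{v_{k-1}}\norm{v_k})$, i.e.\ the same order as the ``good'' terms. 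With $\theta_k = k/(k+1) \to 1$, the per-step slack in the recursion $U_{k+1}\le \tfrac{k+2}{k+1}U_k$ is only $O(L\norm{v_k}^2/(k+1))$, so an error of order $L\norm{v_k}^2$ cannot be absorbed no matter how the Cauchy--Schwarz/Young constants are chosen. (Already at $k=1$ the target inequality reduces to $\inner{v_2}{v_1}\le \tfrac16\norm{v_1}^2 + \tfrac23\norm{v_2}^2$, which no weighted Young inequality delivers and which your intermediate bounds do not enforce.)

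Put differently: if your argument went through, it would prove that the descent condition~\cref{eq:descent_condition} holds for \emph{every} $k$ whenever $\Lest\ge\Ltrue$, rendering restart mechanism~2 irrelevant to the descent guarantee; but this is false, and the paper's proof is explicitly conditional on the epoch not having ended. The essential ingredients you are missing are the Hessian-Lipschitz-flavored inequalities~\cref{eq:fy_fx_diff_upperbound_Mestk,eq:grad_interpolation_error_Mestk} --- which hold by construction of $\Mest_k$, not by \cref{asm:hessian_lip}, and which turn the momentum error into a cubic in $\norm{v_k}$ --- together with~\cref{eq:condition_continue_k-1} to make that cubic negligible. The paper's proof also uses a different potential, \cref{eq:def_potential}, rather than $(k+1)[f(x_k)-f(x_0)]+\tfrac{L}{2}S_k$; your choice neatly encodes the target inequality but loses the one-step ``near-decrease'' structure that the paper's $\Phi_k$ possesses.
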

Before providing the proof, let us describe the idea behind it.

To obtain the descent condition~\cref{eq:descent_condition}, we first evaluate the decrease for one iteration.
However, a common difficulty arises when analyzing AGD: the objective function value does not necessarily decrease monotonically.
To deal with the problem, we introduce a potential function related to $f(x_k)$ and show that it is \emph{nearly} decreasing.
The potential function $\Phi_k$ is defined by
\begin{align}
  \Phi_k
  \coloneqq
  f(x_k)
  + \frac{\theta_k^2}{2} \prn*{
    \inner*{\nabla f(x_{k-1})}{x_k - x_{k-1}}
    + \frac{1}{2 \Lest} \norm*{\nabla f(x_{k-1})}^2
    + \Lest \norm*{x_k - x_{k-1}}^2
  },
  \label{eq:def_potential}
\end{align}
inspired by \cref{eq:hint_potential} used in the proof.
The value $\Phi_k$ decreases when $\norm*{x_k - x_{k-1}}$ is small enough as the following lemma shows.
\begin{lemma}
  \label{lem:potential_decrease_iteration}
  Under \cref{asm:gradient_lip} and $\Lest \geq \Ltrue$, the following holds for all $k \geq 0$:
  \begin{align}
    \Phi_{k+1} - \Phi_k
    &\leq
    \frac{\theta_{k+1}^2 + \theta_k - 2}{4} \Lest \norm*{x_{k+1} - x_k}^2
    + \frac{7 \theta_k^2}{12} \Mest_k \norm*{x_k - x_{k-1}}^3\\
    &\quad
    + \frac{\theta_k^3}{4 \Lest} \Mest_k^2 \norm*{x_k - x_{k-1}}^4
    - \frac{\theta_k^2}{4 \Lest} \norm*{\nabla f (x_k)}^2.
    \label{eq:potential_decrease_iteration}
  \end{align}
\end{lemma}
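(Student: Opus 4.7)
The plan is to bound $\Phi_{k+1} - \Phi_k$ by combining a gradient-descent-step decrease with the trapezoidal inequality~\eqref{eq:fy_fx_diff_upperbound_Mestk}, and then using the Hessian-free gradient approximation~\eqref{eq:grad_interpolation_error_Mestk} to re-express the subtracted $\tfrac{\theta_k^2}{2}A_k$ piece of the potential in the same variables as the added $\tfrac{\theta_{k+1}^2}{2}A_{k+1}$ piece.

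First, I would split $f(x_{k+1}) - f(x_k) = [f(x_{k+1}) - f(y_k)] + [f(y_k) - f(x_k)]$. Using $\Lest \geq \Ltrue$ and the gradient-descent update $x_{k+1} = y_k - \tfrac{1}{\Lest}\nabla f(y_k)$, \cref{lem:eq:property_gradlip_obj} gives $f(x_{k+1}) - f(y_k) \leq -\tfrac{1}{2\Lest}\|\nabla f(y_k)\|^2$, while~\eqref{eq:fy_fx_diff_upperbound_Mestk} together with $y_k - x_k = \theta_k(x_k - x_{k-1})$ gives
\begin{align*}
  f(y_k) - f(x_k) \leq \tfrac{\theta_k}{2}\langle \nabla f(y_k) + \nabla f(x_k), x_k - x_{k-1}\rangle + \tfrac{\theta_k^3 \Mest_k}{12}\|x_k - x_{k-1}\|^3.
\end{align*}
Next, the identity $\tfrac{1}{2\Lest}\|a\|^2 + \langle a, b\rangle + \tfrac{\Lest}{2}\|b\|^2 = \tfrac{1}{2\Lest}\|a + \Lest b\|^2$ lets me rewrite the bracketed expression in~\eqref{eq:def_potential} as $\tfrac{1}{2\Lest}\|r_k\|^2 + \tfrac{\Lest}{2}\|x_k - x_{k-1}\|^2$ with $r_k \coloneqq \nabla f(x_{k-1}) + \Lest(x_k - x_{k-1})$. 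Combining~\eqref{eq:grad_interpolation_error_Mestk} with the update-induced identity $\nabla f(y_k) = \Lest\bigl(\theta_k(x_k - x_{k-1}) - (x_{k+1} - x_k)\bigr)$, I obtain
\begin{align*}
  \theta_k r_k = (1 + \theta_k)\nabla f(x_k) + \Lest(x_{k+1} - x_k) + \delta_k, \qquad \|\delta_k\| \leq \theta_k \Mest_k \|x_k - x_{k-1}\|^2,
\end{align*}
so that, after squaring, $-\tfrac{\theta_k^2}{4\Lest}\|r_k\|^2$ is expressed in the same variables that govern $A_{k+1}$.

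The crucial algebraic step is then the identity, with $u \coloneqq \nabla f(x_k) + \Lest(x_{k+1} - x_k)$,
\begin{align*}
  \tfrac{\theta_{k+1}^2}{4\Lest}\|u\|^2 - \tfrac{1}{4\Lest}\|u + \theta_k \nabla f(x_k)\|^2 = \tfrac{\theta_{k+1}^2 - 1}{4\Lest}\|u\|^2 - \tfrac{\theta_k}{2\Lest}\langle u, \nabla f(x_k)\rangle - \tfrac{\theta_k^2}{4\Lest}\|\nabla f(x_k)\|^2,
\end{align*}
which directly extracts the target $-\tfrac{\theta_k^2}{4\Lest}\|\nabla f(x_k)\|^2$ and leaves the nonpositive $\tfrac{\theta_{k+1}^2 - 1}{4\Lest}\|u\|^2 \leq 0$ available for absorbing residual cross terms. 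The remaining cross terms---those of the forms $\langle x_k - x_{k-1}, x_{k+1} - x_k\rangle$, $\langle \nabla f(x_k), \cdot\rangle$, $\langle u, \nabla f(x_k)\rangle$, and the $\delta_k$ cross terms arising from the expansion of $\|u + \theta_k \nabla f(x_k) + \delta_k\|^2$---would then be bounded by Young's inequality to produce the $\|x_{k+1} - x_k\|^2$, $\|x_k - x_{k-1}\|^3$, and $\|x_k - x_{k-1}\|^4$ terms in the claim.

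The main obstacle is the careful constant tracking required for the $\|x_k - x_{k-1}\|^3$ and $\|x_k - x_{k-1}\|^4$ coefficients. The trapezoidal bound contributes directly only $\tfrac{\theta_k^3 \Mest_k}{12}\|x_k - x_{k-1}\|^3$, but the target demands $\tfrac{7\theta_k^2 \Mest_k}{12}\|x_k - x_{k-1}\|^3$; the missing cubic contributions must be extracted from the $\langle \cdot, \delta_k\rangle$ cross terms via Young splittings $|\langle w, \delta_k\rangle| \leq \tfrac{1}{2c}\|w\|^2 + \tfrac{c}{2}\|\delta_k\|^2$, where the AM--GM parameter $c$ is chosen proportional to $\theta_k$ so that the extra factor of $\theta_k$ in $\|\delta_k\| \leq \theta_k \Mest_k \|x_k - x_{k-1}\|^2$ is pulled into the quartic coefficient (yielding the target $\tfrac{\theta_k^3}{4\Lest}\Mest_k^2$ rather than the naive $\tfrac{\theta_k^2}{4\Lest}\Mest_k^2$). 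Orchestrating these splittings so that all leftover cross terms sum to at most the target $\|x_{k+1} - x_k\|^2$ coefficient $\tfrac{\theta_{k+1}^2 + \theta_k - 2}{4}\Lest$ is the delicate part of the proof.
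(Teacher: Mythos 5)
Your proposal rests on the same three pillars as the paper's proof: the gradient-descent decrease from \cref{lem:eq:property_gradlip_obj}, the trapezoidal bound~\eqref{eq:fy_fx_diff_upperbound_Mestk}, and the gradient-interpolation bound~\eqref{eq:grad_interpolation_error_Mestk}. The complete-the-square rewriting of the bracketed piece of $\Phi_k$ as $\tfrac{1}{2\Lest}\norm{r_k}^2 + \tfrac{\Lest}{2}\norm{x_k - x_{k-1}}^2$, and the identity $\theta_k r_k = u + \theta_k\nabla f(x_k) - \delta_k$ with $u \coloneqq \nabla f(x_k) + \Lest(x_{k+1}-x_k)$, are correct and algebraically equivalent to the paper's bookkeeping via $P_k = \inner{\nabla f(x_{k-1})}{x_k - x_{k-1}}$ (the paper's Young bound on $P_{k+1}$ is precisely $\norm{u}^2 \geq 0$). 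So the route is different in organization rather than in substance.

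However, there are two concrete gaps. First, you never invoke the hypothesis $\theta_{k+1}^2 \leq \theta_k$ (the paper's condition \cref{eq:theta_increase_not_too_fast}); the only monotonicity you use is $\theta_{k+1}^2 \leq 1$ to discard $\tfrac{\theta_{k+1}^2-1}{4\Lest}\norm{u}^2$. But that discard alone gives the wrong leading coefficient: the target $\tfrac{\theta_{k+1}^2 + \theta_k - 2}{4}\Lest$ on $\norm{x_{k+1}-x_k}^2$ can only be produced by splitting the $\norm{u}^2$-type term and flipping an inequality whose prefactor is nonpositive \emph{because} $\theta_{k+1}^2 \leq \theta_k$. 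Without this condition somewhere, your residual absorption does not close. Second, the heavy lifting in the paper is the handling of the cross term coming from the objective decrease: after the split $f(x_{k+1}) - f(x_k) \le -\tfrac{1}{2\Lest}\norm{\nabla f(y_k)}^2 + \tfrac{1}{2}\inner{\nabla f(y_k) + \nabla f(x_k)}{y_k - x_k} + \ldots$, the paper expands the inner product into four pieces and then bounds $-\inner{\nabla f(x_k)}{\nabla f(y_k)}$ by squaring a rearranged version of \eqref{eq:grad_interpolation_error_Mestk} and applying $(a+b)^2 \le (1+1/\theta_k)a^2 + (1+\theta_k)b^2$. Your plan offers generic Young splittings of $\inner{w}{\delta_k}$ cross terms in its place, and acknowledges that getting the coefficients $\tfrac{7\theta_k^2}{12}\Mest_k$, $\tfrac{\theta_k^3}{4\Lest}\Mest_k^2$, and $\tfrac{\theta_{k+1}^2+\theta_k-2}{4}\Lest$ to come out requires delicate orchestration---but this orchestration is not carried out, and it is exactly the part of the proof that requires the squaring trick, not just AM--GM. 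Until you (i) add $\theta_{k+1}^2 \le \theta_k$ as a used hypothesis and (ii) verify that the trapezoidal cross term and the $\delta_k$ cross terms collectively yield coefficients no worse than those in the statement, the argument is a plausible plan rather than a proof.
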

This lemma can be proven by putting \cref{lem:eq:property_gradlip_obj} due to the Lipschitz gradient together with inequalities \cref{eq:fy_fx_diff_upperbound_Mestk,eq:grad_interpolation_error_Mestk} due to the Lipschitz Hessian.
Summing \cref{lem:potential_decrease_iteration} over $k$ and doing some calculations yields \cref{lem:decrease_epoch}.
To evaluate the terms of $\norm*{x_k - x_{k-1}}^3$ and $\norm*{x_k - x_{k-1}}^4$ in \cref{lem:potential_decrease_iteration} in these calculations, we use 
\begin{align}
  \Mest_{k-1}^2 S_{k-1}
  \leq
  \Lest^2 (1 - \theta_{k-1})^5
  =
  \frac{\Lest^2}{k^5},
  \label{eq:condition_continue_k-1}
\end{align}
which follows from the fact that the epoch did not end at iteration $k-1$.

Now, we prove \cref{lem:potential_decrease_iteration} and use it to prove \cref{lem:decrease_epoch}.
The following proofs do not use the specific form \cref{eq:def_theta_k} of $\theta_k$ but rather a more general condition on $\theta_k$,
\begin{align}
  \theta_{k+1}^2 \leq \theta_k \leq \theta_{k+1},
  \label{eq:theta_increase_not_too_fast}
\end{align}
which is easily verified for all $k \geq 0$ under $\theta_0 \coloneqq \theta_1^2$ and \cref{eq:def_theta_k} for $k \geq 1$.

\begin{proof}[Proof of \cref{lem:potential_decrease_iteration}]
  In the proof, let
  \begin{align}
    P_k
    &\coloneqq
    \inner*{\nabla f(x_{k-1})}{x_k - x_{k-1}}
  \end{align}
  to simplify the notation.
  From \cref{lem:eq:property_gradlip_obj,eq:update_xy}, we have
  \begin{align}
    f(x_{k+1}) - f(y_k)
    &\leq
    \inner*{\nabla f(y_k)}{x_{k+1} - y_k}
    + \frac{\Lest}{2} \norm*{x_{k+1} - y_k}^2
    = 
    - \frac{1}{2 \Lest} \norm*{\nabla f(y_k)}^2.
  \end{align}
  Summing this bound and \cref{eq:fy_fx_diff_upperbound_Mestk} yields
  \begin{alignat}{2}
    &\mathInd
    f(x_{k+1}) - f(x_k)
    \label{eq:descent_f_first}\\
    &\leq
    \frac{1}{2} \inner*{\nabla f(y_k) + \nabla f(x_k)}{y_k - x_k}
    + \frac{\Mest_k}{12} \norm*{y_k - x_k}^3
    - \frac{1}{2 \Lest} \norm*{\nabla f(y_k)}^2\\
    &\leq
    \frac{1}{2} \inner*{\nabla f(y_k) + \nabla f(x_k)}{y_k - x_k}
    + \frac{\theta_k^2 \Mest_k}{12} \norm*{x_k - x_{k-1}}^3
    - \frac{1}{2 \Lest} \norm*{\nabla f(y_k)}^2,
  \end{alignat}
  where the last inequality uses \cref{eq:update_xy} and $0 \leq \theta_k \leq 1$.
  To evaluate the first term on the right-hand side, we decompose it into four terms:
  \begin{align}
    &\mathInd
    \inner*{\nabla f(y_k) + \nabla f(x_k)}{y_k - x_k}\\
    &=
    \underbrace{2 \inner*{\nabla f(y_k)}{y_k - x_k}}_{\text{(A)}}
    {}+{} \underbrace{\theta_k \inner*{\nabla f(x_{k-1})}{y_k - x_k}}_{\text{(B)}}\\[-0.8\baselineskip]
    &\quad
    - \underbrace{\theta_k \inner*{\nabla f(x_k)}{y_k - x_k}}_{\text{(C)}}
    {}-{} \underbrace{
        \inner*{
        \nabla f(y_k)
        + \theta_k \nabla f(x_{k-1})
        - (1 + \theta_k) \nabla f (x_k)
      }{y_k - x_k}
    }_{\text{(D)}}.
  \end{align}
  Each term can be evaluated as
  \begin{alignat}{2}
    \text{(A)}
    &=
    \frac{1}{\Lest} \norm*{\nabla f(y_k)}^2
    + \Lest \norm*{y_k - x_k}^2
    - \Lest \norm*{(y_k - x_k) - \frac{1}{\Lest} \nabla f(y_k)}^2\\
    &=
    \frac{1}{\Lest} \norm*{\nabla f(y_k)}^2
    + \theta_k^2 \Lest \norm*{x_k - x_{k-1}}^2
    - \Lest \norm*{x_{k+1} - x_k}^2
    &\quad&\by{\cref{eq:update_xy}},\\
    \text{(B)}
    &=
    \theta_k^2 \inner*{\nabla f(x_{k-1})}{x_k - x_{k-1}}
    =
    \theta_k^2 P_k
    &\quad&\by{\cref{eq:update_xy}},\\
    \text{(C)}
    &=
    - \theta_k P_{k+1}
    + \theta_k \inner*{\nabla f(x_k)}{x_{k+1} - y_k}\\
    &=
    - \theta_k P_{k+1}
    - \frac{\theta_k}{\Lest} \inner*{\nabla f(x_k)}{\nabla f(y_k)}
    &\quad&\by{\cref{eq:update_xy}},\\
    \text{(D)}
    &\leq
    \theta_k \Mest_k \norm*{x_k - x_{k-1}}^2 \norm*{y_k - x_k}
    &\quad&\by{\cref{eq:grad_interpolation_error_Mestk}}\\
    &=
    \theta_k^2 \Mest_k \norm*{x_k - x_{k-1}}^3
    &\quad&\by{\cref{eq:update_xy}}.
  \end{alignat}
  Plugging the evaluations into \cref{eq:descent_f_first} results in
  \begin{align}
    f(x_{k+1}) - f(x_k)
    \label{eq:descent_f_second}
    &\leq
    \frac{\theta_k^2 \Lest}{2} \norm*{x_k - x_{k-1}}^2
    - \frac{\Lest}{2} \norm*{x_{k+1} - x_k}^2
    + \frac{\theta_k^2}{2} P_k
    - \frac{\theta_k}{2} P_{k+1}\\
    &\quad
    + \frac{7 \theta_k^2}{12} \Mest_k \norm*{x_k - x_{k-1}}^3
    - \frac{\theta_k}{2 \Lest} \inner*{\nabla f(x_k)}{\nabla f(y_k)}.
  \end{align}

  Next, to bound the last term on the right-hand side, we use the following inequality from \cref{eq:grad_interpolation_error_Mestk}:
  \begin{align}
    \norm*{
      \nabla f(y_k)
      - (1 + \theta_k) \nabla f (x_k)
    }
    &\leq
    \theta_k \norm*{ \nabla f(x_{k-1})}
    + 
    \theta_k \Mest_k
    \norm*{x_k - x_{k-1}}^2.
  \end{align}
  We square both sides as
  \begin{alignat}{2}
    &\mathInd
    \norm*{
      \nabla f(y_k)
      - (1 + \theta_k) \nabla f (x_k)
    }^2\\
    &=
    \norm*{\nabla f(y_k)}^2
    + (1 + \theta_k)^2 \norm*{\nabla f (x_k)}^2
    - 2 (1 + \theta_k) \inner*{\nabla f (x_k)}{\nabla f(y_k)}\\
    &\geq
    (1 + \theta_k)^2 \norm*{\nabla f (x_k)}^2
    - 2 (1 + \theta_k) \inner*{\nabla f (x_k)}{\nabla f(y_k)}
  \end{alignat}
  and
  \begin{alignat}{2}
    \theta_k^2 \prn*{
      \norm*{ \nabla f(x_{k-1})}
      + \Mest_k \norm*{x_k - x_{k-1}}^2
    }^2
    &\leq
    \theta_k (1 + \theta_k) \norm*{ \nabla f(x_{k-1})}^2\\
    &\quad
    + \theta_k^2 (1 + \theta_k) \Mest_k^2 \norm*{x_k - x_{k-1}}^4,
  \end{alignat}
  where we have used $(a + b)^2 \leq (1 + 1/\theta) a^2 + (1 + \theta) b^2$ for $a, b, \theta > 0$.
  Rearranging the terms yields
  \[
    - \inner*{\nabla f (x_k)}{\nabla f(y_k)}
    \leq
    \frac{\theta_k}{2} \norm*{ \nabla f(x_{k-1})}^2
    - \frac{1 + \theta_k}{2} \norm*{\nabla f (x_k)}^2
    + \frac{\theta_k^2}{2} \Mest_k^2 \norm*{x_k - x_{k-1}}^4.
  \]
  Plugging this bound into \cref{eq:descent_f_second} results in
  \begin{align}
    \label{eq:hint_potential}
    &\mathInd
    f(x_{k+1}) - f(x_k)\\
    &\leq
    \frac{\theta_k^2}{2} \prn*{
      P_k
      + \frac{1}{2 \Lest} \norm*{\nabla f(x_{k-1})}^2
      + \Lest \norm*{x_k - x_{k-1}}^2
    }\\
    &\quad
    -
    \frac{\theta_k}{2} \prn*{
      P_{k+1} + \frac{1}{2 \Lest} \norm*{\nabla f(x_k)}^2
    }
    - \frac{\Lest}{2} \norm*{x_{k+1} - x_k}^2\\
    &\quad
    + \frac{7 \theta_k^2}{12} \Mest_k \norm*{x_k - x_{k-1}}^3
    + \frac{\theta_k^3}{4 \Lest} \Mest_k^2 \norm*{x_k - x_{k-1}}^4
    - \frac{\theta_k^2}{4 \Lest} \norm*{\nabla f (x_k)}^2.
  \end{align}
  This inequality can be rewritten with the potential function $\Phi_k$ defined in \cref{eq:def_potential} as
  \begin{align}
    \Phi_{k+1} - \Phi_k
    &\leq
    \frac{\theta_{k+1}^2 - \theta_k}{2}
    \prn*{
      P_{k+1}
      + \frac{1}{2 \Lest} \norm*{\nabla f(x_k)}^2
    }
    + \frac{\theta_{k+1}^2 - 1}{2} \Lest \norm*{x_{k+1} - x_k}^2\\
    &\quad
    + \frac{7 \theta_k^2}{12} \Mest_k \norm*{x_k - x_{k-1}}^3
    + \frac{\theta_k^3}{4 \Lest} \Mest_k^2 \norm*{x_k - x_{k-1}}^4
    - \frac{\theta_k^2}{4 \Lest} \norm*{\nabla f (x_k)}^2.
  \end{align}
  Finally, using $\theta_{k+1}^2 - \theta_k \leq 0$ from \cref{eq:theta_increase_not_too_fast} and
  \begin{align}
    - P_{k+1}
    =
    - \inner*{\nabla f(x_k)}{x_{k+1} - x_k}
    \leq
    \frac{1}{2 \Lest} \norm*{\nabla f(x_k)}^2
    + \frac{\Lest}{2} \norm*{x_{k+1} - x_k}^2
  \end{align}
  from Young's inequality ($\abs{\inner*{a}{b}} \leq \frac{1}{2} \norm*{a}^2 + \frac{1}{2} \norm*{b}^2$) yields the desired result as
  \begin{align}
    \Phi_{k+1} - \Phi_k
    &\leq
    \frac{\theta_{k+1}^2 - \theta_k}{2} \prn*{- \frac{\Lest}{2} \norm*{x_{k+1} - x_k}^2}
    + \frac{\theta_{k+1}^2 - 1}{2} \Lest \norm*{x_{k+1} - x_k}^2\\
    &\quad
    + \frac{7 \theta_k^2}{12} \Mest_k \norm*{x_k - x_{k-1}}^3
    + \frac{\theta_k^3}{4 \Lest} \Mest_k^2 \norm*{x_k - x_{k-1}}^4
    - \frac{\theta_k^2}{4 \Lest} \norm*{\nabla f (x_k)}^2\\
    &=
    \frac{\theta_{k+1}^2 + \theta_k - 2}{4} \Lest \norm*{x_{k+1} - x_k}^2
    + \frac{7 \theta_k^2}{12} \Mest_k \norm*{x_k - x_{k-1}}^3\\
    &\quad
    + \frac{\theta_k^3}{4 \Lest} \Mest_k^2 \norm*{x_k - x_{k-1}}^4
    - \frac{\theta_k^2}{4 \Lest} \norm*{\nabla f (x_k)}^2.
  \end{align}
\end{proof}

\begin{proof}[Proof of \cref{lem:decrease_epoch}]
  Note that
  \begin{align}
    \sum_{i=1}^k \norm*{x_i - x_{i-1}}^3
    &\leq
    S_k^{3/2},\quad
    \sum_{i=1}^k \norm*{x_i - x_{i-1}}^4
    \leq
    S_k^2
    \label{eq:sum_xk_diff_power34_upperbound}
  \end{align}
  by the monotonicity of $\ell_p$-norms, i.e., $\norm{a}_p \geq \norm{a}_q$ for $1 \leq p \leq q$.
  In addition, we have
  \begin{align}
    \Phi_k - f(x_k)
    &=
    \frac{\theta_k^2}{2} \prn*{
      \frac{1}{2 \Lest} \norm*{\nabla f(x_{k-1}) + L (x_k - x_{k-1})}^2
      + \frac{\Lest}{2} \norm*{x_k - x_{k-1}}^2
    }
    \geq
    0,
    \label{eq:potential_lowerbound}\\
    \Phi_0 - f(x_0)
    &=
    \frac{\theta_0^2}{4 \Lest} \norm*{\nabla f(x_0)}^2
    \label{eq:potential_obj_0}
  \end{align}
  since $x_{-1} = x_0$.

  Summing \cref{lem:potential_decrease_iteration} over $k$ and telescoping yields
  \begin{align}
    &\mathInd
    \Phi_k - \Phi_0
    \label{eq:potential_decrease_epoch}\\
    &\leq
    \sum_{i=0}^{k-1}
    \bigg(
      \frac{\theta_{i+1}^2 + \theta_i - 2}{4} \Lest \norm*{x_{i+1} - x_i}^2
      + \frac{7 \theta_i^2}{12} \Mest_i \norm*{x_i - x_{i-1}}^3\\
      &\qquad\qquad
      + \frac{\theta_i^3}{4 \Lest} \Mest_i^2 \norm*{x_i - x_{i-1}}^4
      - \frac{\theta_i^2}{4 \Lest} \norm*{\nabla f(x_i)}^2
    \bigg)\\
    &\leq
    \frac{\theta_k^2 + \theta_{k-1} - 2}{4} \Lest S_k
    + \frac{7 \theta_{k-1}^2}{12} \Mest_{k-1} S_{k-1}^{3/2} 
    + \frac{\theta_{k-1}^3}{4 \Lest} \Mest_{k-1}^2 S_{k-1}^2
    - \frac{\theta_0^2}{4 \Lest} \norm*{\nabla f(x_0)}^2,
  \end{align}
  where we have used \cref{eq:theta_increase_not_too_fast}, \cref{eq:sum_xk_diff_power34_upperbound}, and $\Mest_i \leq \Mest_{k-1}$.
  Combining \cref{eq:potential_lowerbound,eq:potential_obj_0,eq:potential_decrease_epoch} yields
  \begin{align}
    f(x_k) - f(x_0)
    \leq
    \frac{\theta_k^2 + \theta_{k-1} - 2}{4} \Lest S_k
    + \frac{7 \theta_{k-1}^2}{12} \Mest_{k-1} S_{k-1}^{3/2} 
    + \frac{\theta_{k-1}^3}{4 \Lest} \Mest_{k-1}^2 S_{k-1}^2.
  \end{align}
  Furthermore, we bound the last two terms as
  \begin{alignat}{2}
    &\mathInd
    \frac{7 \theta_{k-1}^2}{12} \Mest_{k-1} S_{k-1}^{3/2} 
    + \frac{\theta_{k-1}^3}{4 \Lest} \Mest_{k-1}^2 S_{k-1}^2\\
    &\leq
    \prn*{
      \frac{7 \theta_{k-1}^2}{12} \Mest_{k-1} \sqrt{S_{k-1}}
      + \frac{\theta_{k-1}^3}{4 \Lest} \Mest_{k-1}^2 S_{k-1}
    }
    S_k
    &\quad&\by{$S_{k-1} \leq S_k$}\\
    &\leq
    \prn*{
      \frac{7}{12} \theta_{k-1}^2 (1 - \theta_{k-1})^{5/2}
      + \frac{1}{4} \theta_{k-1}^3 (1 - \theta_{k-1})^5
    }
    \Lest S_k
    &\quad&\by{\cref{eq:condition_continue_k-1}}\\
    &\leq
    \prn*{
      \frac{7}{12} + \frac{1}{4}
    }
    \theta_{k-1}^2 (1 - \theta_{k-1})^2
    \Lest S_k
    &\quad&\by{$0 \leq \theta_{k-1} \leq 1$}\\
    &\leq
    \frac{1}{4}
    \theta_{k-1} (1 - \theta_{k-1})
    \Lest S_k
    &\quad&\by{$0 \leq \theta_{k-1} \leq 1$},
  \end{alignat}
  which results in
  \begin{alignat}{2}
    f(x_k) - f(x_0)
    &\leq
    \frac{\theta_k^2 + \theta_{k-1} - 2}{4} \Lest S_k
    + \frac{1}{4} \theta_{k-1} (1 - \theta_{k-1}) \Lest S_k\\
    &=
    - \frac{1 - \theta_k}{2} \Lest S_k
    + \frac{(1 - \theta_k)^2 - (1 - \theta_{k-1})^2}{4} \Lest S_k\\
    &\leq
    - \frac{1 - \theta_k}{2} \Lest S_k
    &\quad&\by{\cref{eq:theta_increase_not_too_fast}}
  \end{alignat}
  and concludes the proof.
\end{proof}

\subsection{Upper bound on $M_k$}
This section proves the following upper bound on $M_k$.
\begin{proposition}
  \label{prop:Mk_upperbound}
  Suppose that \cref{asm:hessian_lip} holds.
  Then, the following is true throughout \cref{alg:proposed_agd}: 
  $\Mest_k \leq \max\set{M_0, \Mtrue}$.
\end{proposition}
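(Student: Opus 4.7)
The plan is to proceed by induction on the iteration counter, showing that at every update $\Mest_k\leq\max\{\Mest_{k-1},\Mtrue\}$. Since $\Mest_k$ is defined as the maximum of $\Mest_{k-1}$ and two (respectively three) ratios, and since the $\Mest_{k-1}$ slot inherits the inductive hypothesis immediately, it suffices to show that each of the remaining ratios in the definition of $\Mest_k$ is bounded above by $\Mtrue$ under \cref{asm:hessian_lip}. The whole argument is therefore a direct application of the two Hessian-free inequalities of \cref{sec:preliminary}.

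For the ratio involving $f(y_k)-f(x_k)$, I would apply \cref{lem:trapezoidal_rule_error} with $x=y_k$ and $y=x_k$; after rearrangement this gives exactly
\[
  \frac{12\bigl(f(y_k)-f(x_k)-\tfrac12\inner{\nabla f(y_k)+\nabla f(x_k)}{y_k-x_k}\bigr)}{\norm{y_k-x_k}^3}\leq\Mtrue.
\]
For the gradient-interpolation ratio, the key algebraic identity is that, by the definition $y_k=x_k+\theta_k(x_k-x_{k-1})$, one has $x_k=\tfrac{\theta_k}{1+\theta_k}x_{k-1}+\tfrac{1}{1+\theta_k}y_k$, expressing $x_k$ as a genuine convex combination. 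Applying \cref{lem:gradient_jensen} with $n=2$ and weights $\lambda_1=\tfrac{\theta_k}{1+\theta_k}$, $\lambda_2=\tfrac{1}{1+\theta_k}$, then multiplying through by $1+\theta_k$ and using $\norm{y_k-x_{k-1}}=(1+\theta_k)\norm{x_k-x_{k-1}}$ and $(1+\theta_k)/2\leq 1$ yields
\[
  \norm{\nabla f(y_k)+\theta_k\nabla f(x_{k-1})-(1+\theta_k)\nabla f(x_k)}\leq\theta_k\Mtrue\norm{x_k-x_{k-1}}^2,
\]
which bounds the corresponding ratio by $\Mtrue$.

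The only nontrivial step is the fourth ratio appearing in \cref{eq:update_Mest}. Here I would first establish as a separate lemma the Hessian-free bound
\[
  \norm{\nabla f(\bar y_k)}\leq\frac{\Lest}{Z_k}\norm{x_k-x_{k-1}}+\frac{(k-1)(k+5)^2}{16 Z_k^2}\Mtrue S_k,
\]
which immediately forces the fourth ratio to be at most $\Mtrue$. To prove this bound, I would apply \cref{lem:gradient_jensen} to the weighted average $\bar y_k=\sum_{i=0}^{k-1}p_{k,i}y_i$, then use the update $\nabla f(y_i)=\Lest(y_i-x_{i+1})$ to rewrite $\sum_{i}p_{k,i}\nabla f(y_i)$ as a telescoping combination of the $x_i$'s that collapses (thanks to the explicit form $p_{k,i}=\tfrac{2(i+1)}{k(k+1)}$) to a multiple of $x_k-x_{k-1}$, yielding the first term. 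The second term comes from estimating the Jensen residual $\sum_{i<j}p_{k,i}p_{k,j}\norm{y_i-y_j}^2$ in terms of $S_k$, using $\norm{y_j-y_i}^2\leq(j-i)\sum_{\ell=i+1}^{j}\norm{y_\ell-y_{\ell-1}}^2$ together with the identity $y_\ell-y_{\ell-1}=(1+\theta_\ell)(x_\ell-x_{\ell-1})-\theta_{\ell-1}(x_{\ell-1}-x_{\ell-2})$ to convert the $y$-increments into $x$-increments.

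The main obstacle, then, is the last combinatorial computation: tracking the weights $p_{k,i}p_{k,j}$ and the polynomial-in-$k$ coefficients coming from the bounds on $\norm{y_i-y_j}^2$, so that after the double summation collapses one obtains the precise constant $(k-1)(k+5)^2/16$. Everything else reduces to either the induction hypothesis or a one-line application of \cref{lem:trapezoidal_rule_error} or \cref{lem:gradient_jensen}.
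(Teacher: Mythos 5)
Your overall plan is the same as the paper's: reduce the claim to showing that each non-$\Mest_{k-1}$ ratio in the definition of $\Mest_k$ is bounded by $\Mtrue$, handle the first ratio via \cref{lem:trapezoidal_rule_error}, the second via \cref{lem:gradient_jensen} applied to the convex combination $x_k=\tfrac{\theta_k}{1+\theta_k}x_{k-1}+\tfrac{1}{1+\theta_k}y_k$, and the third via \cref{lem:gradient_jensen} applied to $\bar y_k$ together with a telescoping identity for $\sum_i p_{k,i}\nabla f(y_i)$. Those first two steps match the paper's proof essentially line by line, including the factor $(1+\theta_k)/2\leq 1$.

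Where you diverge, and where there is a genuine gap, is in bounding the Jensen residual $\sum_{0\le i<j<k}p_{k,i}p_{k,j}\norm{y_i-y_j}^2$. You propose to write $\norm{y_j-y_i}^2\leq(j-i)\sum_{\ell=i+1}^{j}\norm{y_\ell-y_{\ell-1}}^2$ and then convert the $y$-increments to $x$-increments via $y_\ell-y_{\ell-1}=(1+\theta_\ell)(x_\ell-x_{\ell-1})-\theta_{\ell-1}(x_{\ell-1}-x_{\ell-2})$. This route does not give the constant claimed: after Cauchy--Schwarz, $\norm{y_\ell-y_{\ell-1}}^2$ is bounded by roughly $6\norm{x_\ell-x_{\ell-1}}^2+3\norm{x_{\ell-1}-x_{\ell-2}}^2$, so $\norm{y_j-y_i}^2\lesssim 9(j-i)\sum_{\ell=i}^j\norm{x_\ell-x_{\ell-1}}^2$, a coefficient that grows like $9(j-i)$ rather than $(j-i+4)$. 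The paper instead applies the triangle inequality along the path $y_i\to x_i\to x_{i+1}\to\cdots\to x_{j-1}\to y_j$, using $\norm{y_i-x_i}\le\norm{x_i-x_{i-1}}$ and $\norm{y_j-x_{j-1}}\le 2\norm{x_j-x_{j-1}}$, so that Cauchy--Schwarz yields exactly the coefficient $(j-i+4)$ and the double sum collapses to $(k-1)(k+5)^2 S_k/8$. This precision is not optional: the factor $(k-1)(k+5)^2/16$ in \cref{eq:update_Mest} is hard-coded, and a looser combinatorial bound would not establish that the fourth ratio is at most $\Mtrue$, breaking the conclusion $\Mest_k\leq\max\set{M_0,\Mtrue}$ for the variant that uses \cref{eq:update_Mest}. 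To close the argument along the paper's lines, replace your two-step estimate through $y$-increments with the direct $y_i\to x_i\to\cdots\to x_{j-1}\to y_j$ path.
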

Because of the update rules \cref{eq:update_Mest_practical} or \cref{eq:update_Mest} of $M_k$, it suffices to show the inequalities obtained by replacing $\Mest_k$ with $M_f$ in \cref{eq:fy_fx_diff_upperbound_Mestk,eq:grad_interpolation_error_Mestk,eq:grad_ybark_upperbound_Mestk}.
One of them has already been obtained as \cref{lem:trapezoidal_rule_error}.
Below we prove the other two; the following lemma gives the formal statement.
\begin{lemma}
  \label{lem:grad_interpolation_error}
  Under \cref{asm:hessian_lip}, the following hold for all $k \geq 1$:
  \begin{align}
    &
    \norm*{
      \nabla f(y_k)
      + \theta_k \nabla f(x_{k-1})
      - (1 + \theta_k) \nabla f (x_k)
    }
    \leq
    \theta_k \Mtrue \norm*{x_k - x_{k-1}}^2,
    \label{eq:grad_interpolation_error_Mf}\\
    &
    \norm*{ \nabla f (\bar y_k) }
    \leq
    \frac{\Lest}{Z_k} \norm*{x_k - x_{k-1}}
    + \frac{(k-1) (k+5)^2}{16 Z_k^2} \Mtrue S_k.
    \label{eq:grad_ybark_upperbound_Mf}
  \end{align}
\end{lemma}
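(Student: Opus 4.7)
The plan is to derive both inequalities directly from the Jensen-type inequality for the gradient, \cref{lem:gradient_jensen}. The overall strategy: identify $x_k$ (resp.\ $\bar y_k$) as a suitable convex combination of iterates, then the Jensen gap produces a sum of pairwise squared distances that must be controlled in terms of $\norm*{x_i - x_{i-1}}$.

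For \cref{eq:grad_interpolation_error_Mf}, rearranging the update $y_k = x_k + \theta_k(x_k - x_{k-1})$ gives $x_k = \tfrac{1}{1+\theta_k} y_k + \tfrac{\theta_k}{1+\theta_k} x_{k-1}$, a convex combination of $y_k$ and $x_{k-1}$. Applying \cref{lem:gradient_jensen} with $n=2$ and the identity $y_k - x_{k-1} = (1+\theta_k)(x_k - x_{k-1})$ yields a Jensen gap of $\tfrac{\theta_k \Mtrue}{2(1+\theta_k)^2} (1+\theta_k)^2 \norm*{x_k - x_{k-1}}^2 = \tfrac{\theta_k \Mtrue}{2}\norm*{x_k - x_{k-1}}^2$ on $\norm*{\nabla f(x_k) - \tfrac{1}{1+\theta_k}\nabla f(y_k) - \tfrac{\theta_k}{1+\theta_k}\nabla f(x_{k-1})}$. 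Multiplying through by $1+\theta_k \leq 2$ closes the argument.

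For \cref{eq:grad_ybark_upperbound_Mf}, I would write $\bar y_k = \sum_{i=0}^{k-1} p_{k,i} y_i$, apply \cref{lem:gradient_jensen}, and invoke the triangle inequality to get
\begin{equation*}
  \norm*{\nabla f(\bar y_k)} \leq \norm*{\sum_{i=0}^{k-1} p_{k,i} \nabla f(y_i)} + \frac{\Mtrue}{2} \sum_{0 \leq i < j \leq k-1} p_{k,i} p_{k,j} \norm*{y_i - y_j}^2.
\end{equation*}
The first term simplifies by a telescoping argument: using $\nabla f(y_i) = \Lest(y_i - x_{i+1})$ from \cref{eq:update_xy}, substituting $y_i = x_i + \theta_i(x_i - x_{i-1})$ with $x_{-1} = x_0$, and invoking the identity $p_{k,i-1} = \theta_i p_{k,i}$ for $1 \leq i \leq k-1$ (immediate from the product definition of $p_{k,i}$), the intermediate terms cancel and the sum collapses to $-\tfrac{\Lest}{Z_k}(x_k - x_{k-1})$. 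Taking norms produces the first term of the claim.

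The main obstacle is controlling the Jensen gap $\sum_{i<j} p_{k,i} p_{k,j} \norm*{y_i - y_j}^2$ to match the target $\tfrac{(k-1)(k+5)^2}{8 Z_k^2} S_k$. With the explicit forms $\theta_i = i/(i+1)$, $p_{k,i} = 2(i+1)/(k(k+1))$, and $Z_k = (k+1)/2$ from \cref{eq:ybar_Zk_alternative}, this reduces to bounding $\sum_{i<j}(i+1)(j+1)\norm*{y_i - y_j}^2$ by an explicit polynomial in $k$ times $S_k$. I would estimate $\norm*{y_i - y_j}^2$ via Cauchy--Schwarz in terms of the consecutive differences $\norm*{y_l - y_{l-1}}^2$ for $i < l \leq j$, and then bound each such difference using the expansion $y_l - y_{l-1} = (1+\theta_l)(x_l - x_{l-1}) - \theta_{l-1}(x_{l-1} - x_{l-2})$ to reduce to quantities summed in $S_k$. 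The polynomial factor $(k-1)(k+5)^2$ should emerge from swapping the order of summation and, for each fixed $l$, evaluating $\sum_{i<l\leq j}(i+1)(j+1)(j-i)$; this combinatorial bookkeeping is the technically delicate part of the proof.
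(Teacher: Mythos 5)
Your proof of \cref{eq:grad_interpolation_error_Mf} is correct and essentially identical to the paper's: same convex-combination identity $x_k = \tfrac{1}{1+\theta_k}y_k + \tfrac{\theta_k}{1+\theta_k}x_{k-1}$, same application of \cref{lem:gradient_jensen} with $n=2$, same cleanup via $1+\theta_k\le 2$. For \cref{eq:grad_ybark_upperbound_Mf}, your Jensen split and the telescoping that collapses $\sum_i p_{k,i}\nabla f(y_i)$ to $-\tfrac{\Lest}{Z_k}(x_k-x_{k-1})$ also match the paper exactly.

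The gap is in your plan for the Jensen-gap term. You propose to apply Cauchy--Schwarz first, giving $\norm{y_i-y_j}^2\le(j-i)\sum_{l=i+1}^j\norm{y_l-y_{l-1}}^2$, and only afterward to expand each $y_l-y_{l-1}=(1+\theta_l)(x_l-x_{l-1})-\theta_{l-1}(x_{l-1}-x_{l-2})$ into $x$-increments. This order of operations is lossy: each $y_l-y_{l-1}$ carries a coefficient up to $2$ on $\norm{x_l-x_{l-1}}$ and up to $1$ on $\norm{x_{l-1}-x_{l-2}}$, consecutive $l$'s double-count the same $x$-increment, and the subsequent squaring inflates the effective coefficient of $\norm{x_m-x_{m-1}}^2$ to roughly $9(j-i)$, compared to the $(j-i+4)$ the lemma actually needs. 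Since the target constant $\tfrac{(k-1)(k+5)^2}{16 Z_k^2}$ is essentially tight (the intermediate bound $(l+1)(k-l)\le(k+1)^2/4$ is near-equality in the middle range of $l$), a factor-of-$\approx 9$ loss cannot be absorbed, even if you retain the exact weights $(i+1)(j+1)$ rather than the paper's cruder uniform bound $p_{k,i}p_{k,j}\le 1/Z_k^2$.

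The fix, and the paper's key move, is to collect and telescope \emph{before} invoking Cauchy--Schwarz. Summing $y_l-y_{l-1}$ from $l=i+1$ to $j$ and collecting coefficients, the interior terms simplify because $(1+\theta_m)-\theta_m=1$, leaving
\begin{align}
  y_j - y_i
  =
  -\theta_i (x_i - x_{i-1})
  + \sum_{m=i+1}^{j-1} (x_m - x_{m-1})
  + (1 + \theta_j) (x_j - x_{j-1}).
\end{align}
Equivalently, one traces the path $y_i\to x_i\to x_{i+1}\to\cdots\to x_{j-1}\to y_j$, as the paper does. Applying Cauchy--Schwarz only once to this collapsed sum gives the clean factor $1^2 + (j-i-1)\cdot 1^2 + 2^2 = j-i+4$, which is exactly what the combinatorial bookkeeping (the sum $\sum_{i\le l\le j}(j-i+4)$ over $0\le i<j<k$) needs to deliver the advertised $(k-1)(k+5)^2/8$ coefficient.
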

The following proofs do not use the specific form \cref{eq:def_theta_k} of $\theta_k$; they only use $0 \leq \theta_k \leq 1$.

\begin{proof}[Proof of \cref{eq:grad_interpolation_error_Mf}]
  Note that $x_k = \frac{1}{1 + \theta_k} y_k + \frac{\theta_k}{1 + \theta_k} x_{k-1}$ from \cref{eq:update_xy}, and \cref{lem:gradient_jensen} gives
  \begin{alignat}{2}
    &\mathInd
    \norm*{
      \nabla f (x_k)
      - \frac{1}{1 + \theta_k} \nabla f(y_k)
      - \frac{\theta_k}{1 + \theta_k} \nabla f(x_{k-1})
    }\\
    &\leq
    \frac{\Mtrue \theta_k}{2 (1 + \theta_k)^2} \norm*{y_k - x_{k-1}}^2\\
    &=
    \frac{\Mtrue \theta_k}{2} \norm*{x_k - x_{k-1}}^2
    \leq
    \frac{\Mtrue \theta_k}{1 + \theta_k} \norm*{x_k - x_{k-1}}^2
    &\quad&\by{\cref{eq:update_xy} and $0 \leq \theta_k \leq 1$}.
  \end{alignat}
  Multiplying by $(1 + \theta_k)$ concludes the proof.
\end{proof}
\begin{proof}[Proof of \cref{eq:grad_ybark_upperbound_Mf}]
  We have
  \begin{alignat}{2}
    &\mathInd
    p_{k,i} \nabla f(y_i)
    =
    p_{k,i} \Lest (y_i - x_{i+1})\\
    &=
    p_{k,i} \Lest \prn*{
      \theta_i (x_i - x_{i-1})
      - (x_{i+1} - x_i)
    }
    &\quad&\by{\cref{eq:update_xy}}\\
    &=
    \Lest \prn*{
      p_{k,i-1} (x_i - x_{i-1})
      - p_{k,i} (x_{i+1} - x_i)
    }
    &\quad&\by{definition \cref{eq:def_ybark} of $p_{k,i}$}.
  \end{alignat}
  Summing this equality over $i$ yields
  \begin{align}
    \sum_{i=0}^{k-1}
    p_{k,i} \nabla f(y_i)
    =
    - \Lest p_{k,k-1} (x_k - x_{k-1}),
    \label{eq:weighted_average_gradient_equation}
  \end{align}
  where we have used $x_0 = x_{-1}$.
  Now, we obtain
  \begin{alignat}{2}
    &\mathInd
    \norm*{
      \nabla f (\bar y_k)
    }
    \label{eq:norm_grad_ybark_upperbound_pre}\\
    &\leq
    \norm*{
      \sum_{i=0}^{k-1} p_{k,i} \nabla f \prn*{ y_i }
    }
    + \frac{\Mtrue}{2} \sum_{0 \leq i < j < k} p_{k,i} p_{k,j} \norm*{y_i - y_j}^2
    &\quad&\by{\cref{lem:gradient_jensen}}\\
    &=
    \Lest p_{k,k-1} \norm*{x_k - x_{k-1}}
    + \frac{\Mtrue}{2} \sum_{0 \leq i < j < k} p_{k,i} p_{k,j} \norm*{y_i - y_j}^2
    &\quad&\by{\cref{eq:weighted_average_gradient_equation}}\\
    &\leq
    \frac{\Lest}{Z_k} \norm*{x_k - x_{k-1}}
    + \frac{\Mtrue}{2 Z_k^2} \sum_{0 \leq i < j < k} \norm*{y_i - y_j}^2,
  \end{alignat}
  where the last inequality follows from $p_{k,i} \leq p_{k,k-1} = 1 / Z_k$ for all $0 \leq i < k$.

  Next, we bound the second term on the right-hand side.
  Since for $0 \leq i < j < k$,
  \begin{alignat}{2}
    &\mathInd
    \norm*{y_i - y_j}\\
    &\leq
    \norm*{y_i - x_i}
    + \sum_{l=i+1}^{j-1} \norm*{x_l - x_{l-1}}
    + \norm*{y_j - x_{j-1}}
    &\quad&\by{the triangle inequality}\\
    &=
    \norm*{x_i - x_{i-1}}
    + \sum_{l=i+1}^{j-1} \norm*{x_l - x_{l-1}}
    + 2 \norm*{x_j - x_{j-1}}
    &\quad&\by{\cref{eq:update_xy} and $0 \leq \theta_k \leq 1$}\\
    &\leq
    \prn*{
      1^2 + \sum_{l=i+1}^{j-1} 1^2 + 2^2
    }^{1/2}
    \prn*{
      \sum_{l=i}^j \norm*{x_l - x_{l-1}}^2
    }^{1/2}
    &\quad&\by{Cauchy--Schwarz}\\
    &=
    \sqrt{j - i + 4}
    \prn*{
      \sum_{l=i}^j \norm*{x_l - x_{l-1}}^2
    }^{1/2},
  \end{alignat}
  we have
  \begin{align}
    \sum_{0 \leq i < j < k}
    \norm*{y_i - y_j}^2
    &\leq
    \sum_{0 \leq i < j < k}
    \sum_{l=i}^j
    (j - i + 4) \norm*{x_l - x_{l-1}}^2\\
    &=
    \sum_{l=0}^{k-1}
    \prn*{
      \sum_{i=0}^l
      \sum_{j=l}^{k-1}
      (j - i + 4) 
    }
    \norm*{x_l - x_{l-1}}^2
    - 4 \sum_{l=0}^{k-1} \norm*{x_l - x_{l-1}}^2\\
    &=
    \frac{k + 7}{2}
    \sum_{l=0}^{k-1}
    (l + 1) (k - l)
    \norm*{x_l - x_{l-1}}^2
    - 4 \sum_{l=0}^{k-1} \norm*{x_l - x_{l-1}}^2\\
    &\leq
    \frac{k + 7}{2}
    \sum_{l=0}^{k-1} \frac{(k+1)^2}{4} \norm*{x_l - x_{l-1}}^2
    - 4 \sum_{l=0}^{k-1} \norm*{x_l - x_{l-1}}^2\\
    &=
    \frac{(k-1) (k+5)^2}{8}
    \sum_{l=0}^{k-1} \norm*{x_l - x_{l-1}}^2
    \leq
    \frac{(k-1) (k+5)^2}{8} S_k.
  \end{align}
  Plugging this bound into \cref{eq:norm_grad_ybark_upperbound_pre} concludes the proof.
\end{proof}

\subsection{Upper bound on gradient norm}
Let $\Lmax$ and $\Mmax$ be the upper bounds on $L$ and $M_k$ given in \cref{prop:L_upperbound,prop:Mk_upperbound}: $\Lmax \coloneqq \max\set{\Linit, \alpha \Ltrue}$ and $\Mmax \coloneqq \max\set{M_0, \Mtrue}$.
The following lemma provides upper bounds on the gradient norm.
\begin{lemma}
  \label{lem:grad_norm_ybar_upperbound}
  Suppose that \cref{asm:gradient_hessian_lip} holds.
  In \cref{alg:proposed_agd}, the following is true when $k \geq 2$:
  \begin{align}
    \min_{1 \leq i < k} \norm*{ \nabla f (\bar y_i) }
    &\leq
    \frac{4 \Lest \Mmax}{\Mest_{k-1}}
    \sqrt{\frac{S_{k-1}}{k^3}}
    \leq
    \frac{4 \Lmax^2 \Mmax}{M_0^2 k^4}.
    \label{eq:grad_norm_ybar_upperbound}
  \end{align}
  If $M_k$ is computed with \cref{eq:update_Mest} in the algorithm, the above bound is improved to
  \begin{align}
    \min_{1 \leq i < k} \norm*{ \nabla f (\bar y_i) }
    &\leq
    4 \Lest \sqrt{\frac{S_{k-1}}{k^3}}
    \leq
    \frac{4 \Lmax^2}{M_0 k^4}.
    \label{eq:grad_norm_ybar_upperbound_tight}
  \end{align}
\end{lemma}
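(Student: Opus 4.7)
The plan is to apply the gradient-at-average inequality pointwise in $i \in \{1,\ldots,k-1\}$ and then pick a single $i^\star$ at which the right-hand side is small. After substituting $Z_i = (i+1)/2$ from \cref{eq:ybar_Zk_alternative} into \cref{eq:grad_ybark_upperbound_Mf} (practical case, \cref{eq:update_Mest_practical}) or \cref{eq:grad_ybark_upperbound_Mestk} (alternative case, \cref{eq:update_Mest}), one obtains, for every $i \leq k-1$,
\begin{align*}
  \norm*{\nabla f(\bar y_i)}
  \leq
  \frac{2\Lest}{i+1}\,\norm*{x_i - x_{i-1}}
  + \frac{(i-1)(i+5)^2}{4(i+1)^2}\, M_\star S_i,
\end{align*}
where $M_\star = \Mtrue$ in the practical case and $M_\star = \Mest_i$ in the alternative case. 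My strategy is to bound the second (curvature) summand uniformly in $i$ and to minimize the first summand by a weighted-minimum argument.

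For the second summand, the crucial input is that the epoch has not restarted at iteration $k-1$, so condition~\cref{eq:condition_continue_k-1} yields $\Mest_{k-1}\sqrt{S_{k-1}} \leq \Lest/k^{5/2}$. Monotonicity ($\Mest_i \leq \Mest_{k-1}$ and $S_i \leq S_{k-1}$, plus $\Mtrue \leq \Mmax$ in the practical case) then gives $M_\star S_i \leq \Lest\sqrt{S_{k-1}}/k^{5/2}$ in the alternative case and $M_\star S_i \leq (\Mmax/\Mest_{k-1})\Lest\sqrt{S_{k-1}}/k^{5/2}$ in the practical case. A short calculus check shows that $i \mapsto (i-1)(i+5)^2/(i+1)^2$ is increasing and that $(k-2)(k+4)^2/k^3 \leq 2$ (tight at $k=4$), so the coefficient $(i-1)(i+5)^2/(4(i+1)^2)$ is at most $k/2$ uniformly for $i \leq k-1$. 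Hence the whole second summand is at most $\tfrac12\Lest\sqrt{S_{k-1}/k^3}$ in the alternative case and $(\Mmax/(2\Mest_{k-1}))\Lest\sqrt{S_{k-1}/k^3}$ in the practical case, \emph{uniformly} in $i$.

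For the first summand, I apply the weighted-minimum inequality with weights $w_i = (i+1)^2$: since $\sum_{i=1}^{k-1}(i+1)^2\cdot\norm*{x_i-x_{i-1}}^2/(i+1)^2 = S_{k-1}$ and $\sum_{i=1}^{k-1}(i+1)^2 = k(k+1)(2k+1)/6 - 1 \geq k^3/3$ for $k \geq 2$, some $i^\star \in \{1,\ldots,k-1\}$ satisfies $\norm*{x_{i^\star}-x_{i^\star-1}}/(i^\star+1) \leq \sqrt{3S_{k-1}/k^3}$, so the first summand at $i^\star$ is at most $2\sqrt{3}\,\Lest\sqrt{S_{k-1}/k^3}$. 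Adding the two bounds at $i=i^\star$ gives $\min_i\norm*{\nabla f(\bar y_i)} \leq (2\sqrt{3}+\tfrac12)\Lest\sqrt{S_{k-1}/k^3} < 4\Lest\sqrt{S_{k-1}/k^3}$ in the alternative case; in the practical case one uses $\Mmax/\Mest_{k-1} \geq 1$ to absorb the $2\sqrt{3}$ into the prefactor, yielding $4(\Mmax/\Mest_{k-1})\Lest\sqrt{S_{k-1}/k^3}$. The second inequality in each display then follows by invoking \cref{eq:condition_continue_k-1} once more — $\sqrt{S_{k-1}/k^3} \leq \Lest/(\Mest_{k-1}k^4)$ — combined with $\Mest_{k-1} \geq M_0$ (monotonicity of $\Mest_\cdot$) and $\Lest \leq \Lmax$. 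I expect the main delicacy to be the constant bookkeeping that squeezes $2\sqrt{3}+\tfrac12$ just under $4$; the key enabler is that the curvature summand is $O(\Lest\sqrt{S_{k-1}/k^3})$ \emph{uniformly} in $i$, so all the freedom in choosing $i^\star$ can be devoted to the first summand.
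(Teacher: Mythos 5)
Your proof is correct and is essentially the paper's argument repackaged pointwise: since $Z_i=(i+1)/2$, your weights $(i+1)^2$ are exactly the paper's $Z_i^2$, your uniform bound $(i-1)(i+5)^2/(4(i+1)^2)\le k/2$ reproduces the paper's $\bigl(\sum_i (i-1)(i+5)^2/16\bigr)/\bigl(\sum_i Z_i^2\bigr)\le k/2$, and your pigeonhole choice of $i^\star$ yields the same $2\sqrt{3}\,\Lest\sqrt{S_{k-1}/k^3}$ that the paper obtains by Cauchy--Schwarz on $\sum_i Z_i\norm{x_i-x_{i-1}}$ and then dividing by $\sum_i Z_i^2$, with the identical closing steps via \cref{eq:condition_continue_k-1} and $2\sqrt{3}+\tfrac12<4$. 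The only difference is organizational — you select one good index and bound the curvature term uniformly, where the paper sums with weights $Z_i^2$ and divides — so this is the same proof.
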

\begin{proof}[Proof of \cref{eq:grad_norm_ybar_upperbound}] 
  Note that
  \begin{align}
    &\sum_{i=1}^{k-1}
    \frac{(i-1)(i+5)^2}{16}
    =
    \frac{(k - 2) (k - 1) (k^2 + 13 k + 50)}{64}
    \leq
    \frac{k^3 (k+1)}{24},
    \label{eq:i_cubic_sum}\\
    &\sum_{i=1}^{k-1} Z_i^2
    =
    \sum_{i=1}^{k-1} \prn*{\frac{i+1}{2}}^2
    =
    \frac{(k-1) (2 k^2 + 5 k + 6)}{24}
    \geq 
    \frac{k^2 (k+1)}{12}
    \label{eq:Zi_squared_sum}
  \end{align}
  from \cref{eq:ybar_Zk_alternative} and $k \geq 2$.
  Using \cref{eq:grad_ybark_upperbound_Mf}, we have
  \begin{align}
    \prn*{
      \sum_{i=1}^{k-1} Z_i^2
    }
    \min_{1 \leq i < k} \norm*{ \nabla f (\bar y_i) }
    &\leq
    \sum_{i=1}^{k-1}
    Z_i^2
    \norm*{ \nabla f (\bar y_i) }\\
    &\leq
    \sum_{i=1}^{k-1}
    \prn*{
      \Lest Z_i \norm*{x_i - x_{i-1}}
      + \frac{(i-1) (i+5)^2}{16} \Mtrue S_i
    }.
  \end{align}
  Each term on the right-hand side can be bounded as follows:
  \begin{alignat}{2}
    &\sum_{i=1}^{k-1} \Lest Z_i \norm*{x_i - x_{i-1}}
    \leq
    \Lest
    \prn*{
      \sum_{i=1}^{k-1} Z_i^2
    }^{1/2}
    \sqrt{S_{k-1}}
    &\quad&\by{Cauchy--Schwarz},\\
    &\sum_{i=1}^{k-1}
    \frac{(i-1)(i+5)^2}{16} \Mtrue S_i
    \leq
    \frac{k^3 (k+1)}{24}
    \Mtrue S_{k-1}
    &\quad&\by{\cref{eq:i_cubic_sum}}.
  \end{alignat}
  We thus have
  \begin{alignat}{2}
    &\mathInd
    \min_{1 \leq i < k} \norm*{ \nabla f (\bar y_i) }\\
    &\leq
    \Lest 
    \prn*{
      \sum_{i=1}^{k-1} Z_i^2
    }^{-1/2}\!\!
    \sqrt{S_{k-1}}
    + 
    \prn*{
      \sum_{i=1}^{k-1} Z_i^2
    }^{-1}
    \frac{k^3 (k+1)}{24}
    \Mtrue S_{k-1}\\
    &\leq
    \Lest 
    \sqrt{\frac{12 S_{k-1}}{k^3}}
    +
    \frac{k}{2}
    \Mtrue S_{k-1}
    &\quad&\by{\cref{eq:Zi_squared_sum}}\\
    &\leq
    \Lest 
    \sqrt{\frac{12 S_{k-1}}{k^3}}
    +
    \frac{\Lest \Mtrue}{2 \Mest_k} 
    \sqrt{\frac{S_{k-1}}{k^3}}
    &\quad&\by{\cref{eq:condition_continue_k-1}}\\
    &\leq
    \frac{\Lest \Mmax}{\Mest_{k-1}}
    \sqrt{\frac{12 S_{k-1}}{k^3}}
    +
    \frac{\Lest \Mmax}{2 \Mest_{k-1}} 
    \sqrt{\frac{S_{k-1}}{k^3}}
    &\quad&\by{$\Mest_{k-1}, \Mtrue \leq \Mmax$}\\
    &\leq
    \frac{4 \Lest \Mmax}{\Mest_{k-1}}
    \sqrt{\frac{S_{k-1}}{k^3}},
  \end{alignat}
  which is the first inequality of the desired result~\cref{eq:grad_norm_ybar_upperbound}.
  The second inequality follows from \cref{eq:condition_continue_k-1}:
  \begin{align}
    \frac{4 \Lest \Mmax}{\Mest_{k-1}}
    \sqrt{\frac{S_{k-1}}{k^3}}
    \leq
    \frac{4 \Lest^2 \Mmax}{M_{k-1}^2 k^4}
    \leq
    \frac{4 \Lmax^2 \Mmax}{M_0 k^4}.
  \end{align}
\end{proof}
\begin{proof}[Proof of \cref{eq:grad_norm_ybar_upperbound_tight}]
  When $M_k$ is computed with \cref{eq:update_Mest}, we have \cref{eq:grad_ybark_upperbound_Mestk}.
  Similarly to the proof of \cref{eq:grad_norm_ybar_upperbound}, using \cref{eq:grad_ybark_upperbound_Mestk} instead of \cref{eq:grad_ybark_upperbound_Mf} yields
  \begin{alignat}{2}
    \min_{1 \leq i < k} \norm*{ \nabla f (\bar y_i) }
    &\leq
    \Lest 
    \sqrt{\frac{12 S_{k-1}}{k^3}}
    +
    \frac{k}{2} \Mest_{k-1} S_{k-1}.
  \end{alignat}
  The right-hand side is bounded as
  \begin{alignat}{2}
    \Lest 
    \sqrt{\frac{12 S_{k-1}}{k^3}}
    +
    \frac{k}{2} \Mest_{k-1} S_{k-1}
    &\leq
    \Lest 
    \sqrt{\frac{12 S_{k-1}}{k^3}}
    +
    \frac{\Lest}{2} \sqrt{\frac{S_{k-1}}{k^3}}
    &\quad&\by{\cref{eq:condition_continue_k-1}}\\
    &\leq
    4 \Lest
    \sqrt{\frac{S_{k-1}}{k^3}},
  \end{alignat}
  which implies the first inequality in \cref{eq:grad_norm_ybar_upperbound_tight}.
  The second inequality is obtained similarly to the second inequality of \cref{eq:grad_norm_ybar_upperbound}.
\end{proof}

\subsection{Main results}
\label{sec:complexity_bound}
Combining \cref{lem:grad_norm_ybar_upperbound} with conditions \cref{eq:condition_continue,eq:descent_condition}, we obtain the following complexity bound for \cref{alg:proposed_agd}.
\begin{theorem}
  \label{thm:complexity}
  Suppose that \cref{asm:gradient_hessian_lip} holds and let $\Lmax \coloneqq \max\set{\Linit, \alpha \Ltrue}$, $\Mmax \coloneqq \max\set{M_0, \Mtrue}$, $\Delta \coloneqq f(x_\mathrm{init}) - \inf_{x \in \R^d} f(x)$,
  \begin{align}
    c_1
    \coloneqq
    \log_\alpha \prn*{\frac{1}{\beta}},
    \quad\text{and}\quad
    c_2
    \coloneqq
    \log_\alpha \prn*{\frac{\Lmax}{\Linit}}.
    \label{eq:def_c1_c2}
  \end{align}
  In \cref{alg:proposed_agd}, when $\bar y_k$ defined by \cref{eq:def_ybark} satisfies $\norm*{\nabla f(\bar y_k)} \leq \epsilon$ for the first time, the total iteration count $K$ is at most
  \begin{align}
    \prn[\bigg]{
      70
      + 
      108 c_1 \frac{\Mmax^{1/4}}{M_0^{1/4}}
    }
    \frac{\Lmax^{1/2} \Mmax^2 \Delta}{\epsilon^{7/4} M_0^{7/4}}
    + 2 (c_2 + 1) \frac{\Lmax^{1/2} \Mmax^{1/4}}{\epsilon^{1/4} M_0^{1/2}}.
    \label{eq:complexity_bound}
  \end{align}
  If $M_k$ is computed with \cref{eq:update_Mest} in the algorithm and $\beta$ is set as $\beta = 1$, the above bound is improved to
  \begin{align}
    70
    \frac{\Lmax^{1/2} \Mmax^{1/4} \Delta}{\epsilon^{7/4}}
    +
    2 (c_2 + 1) \frac{\Lmax^{1/2}}{M_0^{1/4} \epsilon^{1/4}}.
    \label{eq:complexity_bound_tight}
  \end{align}
\end{theorem}
\begin{proof}[Proof of bound \cref{eq:complexity_bound}]
  We count the number of iterations of \cref{alg:proposed_agd} separately for three types of epoch:
  \begin{itemize}
    \item 
    successful epoch: an epoch that does not find an $\epsilon$-stationary point and ends at Line \ref{alg-line-agd:restart_successful} with the descent condition~\cref{eq:descent_condition} satisfied,
    \item
    unsuccessful epoch: an epoch that does not find an $\epsilon$-stationary point and ends at Line \ref{alg-line-agd:restart_unsuccessful} with the descent condition~\cref{eq:descent_condition} unsatisfied,
    \item
    last epoch: the epoch that finds an $\epsilon$-stationary point.
  \end{itemize}

  \paragraph{Successful epochs}
  Let us focus on a successful epoch.
  Let $k$ be the iteration number of the epoch.
  It follows that 
  \begin{align}
    (k+1)^5
    \Mest_k^2 S_k
    &>
    \Lest^2.
    \label{eq:restart_condition_k}
  \end{align}
  When $k \geq 2$, we have
  \begin{alignat}{2}
    \epsilon
    &<
    \min_{1 \leq i < k} \norm*{ \nabla f (\bar y_i) }
    &\quad&\text{(since this epoch does not find an $\epsilon$-stationary point)}\\
    &\leq
    \frac{4 \Lest \Mmax}{\Mest_{k-1}}
    \sqrt{\frac{S_{k-1}}{k^3}}
    &\quad&\by{\cref{eq:grad_norm_ybar_upperbound}}\\
    &\leq
    \frac{4 \Lest \Mmax}{\Mest_0} \sqrt{\frac{S_k}{k^3}}
    &\quad&\by{$M_{k-1} \geq M_0$ and $S_k \geq S_{k-1}$},
  \end{alignat}
  and hence 
  \begin{align}
    S_k > \prn*{\frac{\epsilon \Mest_0}{4 \Lest \Mmax}}^2 k^3.
    \label{eq:Sk_lowerbound}
  \end{align}
  When $k = 1$, we also have \cref{eq:Sk_lowerbound}, as follows:
  \begin{align}
    S_k
    =
    \norm*{x_1 - x_0}^2
    =
    \frac{1}{\Lest^2} \norm*{\nabla f(x_0)}^2
    =
    \frac{1}{\Lest^2} \norm*{\nabla f(\bar y_1)}^2
    >
    \frac{\epsilon^2}{\Lest^2}
    \geq
    \prn*{\frac{\epsilon \Mest_0}{4 \Lest \Mmax}}^2 k^3.
  \end{align}
  Combining \cref{eq:Sk_lowerbound,eq:restart_condition_k} yields
  \begin{align}
    S_k
    =
    S_k^{7/8}
    S_k^{1/8}
    &\geq
    \prn[\bigg]{\prn*{\frac{\epsilon \Mest_0}{4 \Lest \Mmax}}^2 k^3}^{7/8}
    \prn*{\frac{\Lest^2}{\Mest_k^2 (k+1)^5}}^{1/8}\\
    &=
    \frac{(\epsilon / 4)^{7/4}}{\Lest^{3/2}}
    \frac{M_0^{7/4}}{\Mmax^{7/4} \Mest_k^{1/4}}
    \frac{k^{21/8}}{(k+1)^{5/8}}
    \geq
    \frac{(\epsilon / 4)^{7/4}}{\Lest^{3/2}}
    \frac{M_0^{7/4}}{\Mmax^2}
    \frac{k^{21/8}}{(k+1)^{5/8}}.
  \end{align}
  Plugging this bound into condition \cref{eq:descent_condition}, we obtain
  \begin{align}
    f(x_0) - f(x_k)
    \geq
    \frac{\Lest S_k}{2 (k+1)}
    &\geq
    \frac{(\epsilon / 4)^{7/4}}{\Lest^{1/2}}
    \frac{M_0^{7/4}}{\Mmax^2}
    \prn*{\frac{k}{k+1}}^{13/8}
    \frac{k}{2}\\
    &\geq
    \frac{(\epsilon / 4)^{7/4}}{\Lmax^{1/2}}
    \frac{M_0^{7/4}}{\Mmax^2}
    \prn*{\frac{1}{2}}^{13/8}
    \frac{k}{2}
    \geq
    \frac{\epsilon^{7/4} M_0^{7/4}}{\Lmax^{1/2} \Mmax^2}
    \frac{k}{70},
  \end{align}
  and thus
  \begin{align}
    k
    \leq
    70
    \frac{\Lmax^{1/2} \Mmax^2}{\epsilon^{7/4} M_0^{7/4}}
    \prn*{f(x_0) - f(x_k)}.
  \end{align}
  Summing this bound over all successful epochs leads to the conclusion that the total iteration number of successful epochs is at most
  \begin{align}
    70 \frac{\Lmax^{1/2} \Mmax^2 \Delta}{\epsilon^{7/4} M_0^{7/4}}.
    \label{eq:Ksuc_upperbound}
  \end{align}
  For later use, we will also evaluate the number of successful epochs.
  Combining \cref{eq:Sk_lowerbound,eq:restart_condition_k} also yields
  \begin{align}
    S_k
    =
    S_k^{3/4}
    S_k^{1/4}
    &\geq
    \prn[\bigg]{ \prn*{\frac{\epsilon \Mest_0}{4 \Lest \Mmax}}^2 k^3 }^{3/4}
    \prn*{\frac{\Lest^2}{\Mest_k^2 (k+1)^5}}^{1/4}\\
    &=
    \frac{(\epsilon / 4)^{3/2}}{\Lest}
    \frac{\Mest_0^{3/2}}{\Mmax^{3/2} \Mest_k^{1/2}}
    \frac{k^{9/4}}{(k+1)^{5/4}}
    \geq
    \frac{(\epsilon / 4)^{3/2}}{\Lest}
    \frac{\Mest_0^{3/2}}{\Mmax^2}
    \frac{k^{9/4}}{(k+1)^{5/4}}.
  \end{align}
  Plugging this bound into condition \cref{eq:descent_condition} gives
  \begin{align}
    f(x_0) - f(x_k)
    \geq
    \frac{\Lest S_k}{2 (k+1)}
    &\geq
    \frac{(\epsilon / 4)^{3/2}}{2}
    \frac{\Mest_0^{3/2}}{\Mmax^2}
    \prn*{\frac{k}{k+1}}^{9/4}\\
    &\geq
    \frac{(\epsilon / 4)^{3/2}}{2}
    \frac{\Mest_0^{3/2}}{\Mmax^2}
    \prn*{\frac{1}{2}}^{9/4}
    =
    \frac{\epsilon^{3/2} \Mest_0^{3/2}}{2^{25/4} \Mmax^2}.
  \end{align}
  Thus, we can deduce that the number $N_{\mathrm{suc}}$ of successful epochs satisfies
  \begin{align}
    N_{\mathrm{suc}}
    \leq
    2^{25/4}
    \frac{\Mmax^2 \Delta}{\epsilon^{3/2} \Mest_0^{3/2}}.
    \label{eq:Nsuc_upperbound}
  \end{align}

  \paragraph{Other epochs}
  We deduce from \cref{eq:grad_norm_ybar_upperbound} that the number of iterations for each epoch is at most
  \begin{align}
    \prn*{
      \frac{4 \Lmax^2 \Mmax}{\epsilon M_0^2}
    }^{1/4}.
  \end{align}
  Let $N_{\mathrm{unsuc}}$ be the number of unsuccessful epochs.
  We then have
  \begin{align}
    \Linit
    \beta^{N_{\mathrm{suc}}}
    \alpha^{N_{\mathrm{unsuc}}}
    \leq
    \Lmax,
  \end{align}
  and hence
  \begin{align}
    N_{\mathrm{unsuc}}
    \leq
    c_1 N_{\mathrm{suc}} + c_2,
    \label{eq:Nunsuc_upperbound}
  \end{align}
  where $c_1$ and $c_2$ are defined by \cref{eq:def_c1_c2}.
  The total iteration number of unsuccessful and last epochs is at most
  \begin{alignat}{2}
    &\mathInd
    (N_{\mathrm{unsuc}} + 1)
    \prn*{ \frac{4 \Lmax^2 \Mmax}{\epsilon M_0^2} }^{1/4}\\
    &\leq
    \prn*{ c_1 N_{\mathrm{suc}} + c_2 + 1 }
    \prn*{ \frac{4 \Lmax^2 \Mmax}{\epsilon M_0^2} }^{1/4}
    &\quad&\by{\cref{eq:Nunsuc_upperbound}}\\
    &\leq
    2^{27/4} c_1
    \frac{\Lmax^{1/2} \Mmax^{9/4} \Delta}{\epsilon^{7/4} \Mest_0^2}
    + 2^{1/2} (c_2 + 1) \frac{\Lmax^{1/2} \Mmax^{1/4}}{\epsilon^{1/4} M_0^{1/2}}
    &\quad&\by{\cref{eq:Nsuc_upperbound}}\\
    &\leq
    108 c_1
    \frac{\Lmax^{1/2} \Mmax^{9/4} \Delta}{\epsilon^{7/4} \Mest_0^2}
    + 2 (c_2 + 1) \frac{\Lmax^{1/2} \Mmax^{1/4}}{\epsilon^{1/4} M_0^{1/2}}.
  \end{alignat}
  Putting this bound together with \cref{eq:Ksuc_upperbound} concludes the proof.
\end{proof}

\begin{proof}[Proof of bound \cref{eq:complexity_bound_tight}]
  The proof is similar to \cref{eq:complexity_bound} through the evaluation of the total iteration number of successful, unsuccessful, and last epochs.

  \paragraph{Successful epochs}
  Let us focus on a successful epoch.
  Let $k$ be the iteration number of the epoch.
  As a counterpart of \cref{eq:Sk_lowerbound}, we can obtain 
  \begin{align}
    S_k > \prn*{\frac{\epsilon}{4 \Lest}}^2 k^3
    \label{eq:Sk_lowerbound_tight}
  \end{align}
  from \cref{eq:grad_norm_ybar_upperbound_tight}.
  Combining \cref{eq:Sk_lowerbound_tight,eq:restart_condition_k} yields
  \begin{align}
    S_k
    &=
    S_k^{7/8}
    S_k^{1/8}
    \geq
    \prn*{\prn*{\frac{\epsilon}{4 \Lest}}^2 k^3}^{7/8}
    \prn*{\frac{\Lest^2}{\Mest_k^2 (k+1)^5}}^{1/8}
    =
    \frac{(\epsilon / 4)^{7/4}}{\Lest^{3/2} \Mest_k^{1/4}}
    \frac{k^{21/8}}{(k+1)^{5/8}}.
  \end{align}
  Plugging this bound into condition \cref{eq:descent_condition}, we obtain
  \begin{align}
    f(x_0) - f(x_k)
    \geq
    \frac{\Lest S_k}{2 (k+1)}
    &\geq
    \frac{(\epsilon / 4)^{7/4}}{\Lest^{1/2} \Mest_k^{1/4}}
    \prn*{\frac{k}{k+1}}^{13/8}
    \frac{k}{2}\\
    &\geq
    \frac{(\epsilon / 4)^{7/4}}{\Lmax^{1/2} \Mmax^{1/4}}
    \prn*{\frac{1}{2}}^{13/8}
    \frac{k}{2}
    \geq
    \frac{\epsilon^{7/4}}{\Lmax^{1/2} \Mmax^{1/4}}
    \frac{k}{70}.
  \end{align}
  Hence, as in the proof of \cref{eq:complexity_bound}, the total iteration number of all successful epochs is at most
  \begin{align}
    70
    \frac{\Lmax^{1/2} \Mmax^{1/4} \Delta}{\epsilon^{7/4}}.
    \label{eq:Ksuc_upperbound_tight}
  \end{align}

  \paragraph{Other epochs}
  We deduce from \cref{eq:grad_norm_ybar_upperbound_tight} that the iteration number of each epoch is at most $\prn[\big]{ \frac{4 \Lmax^2}{M_0 \epsilon} }^{1/4}$.
  In addition, unsuccessful epochs occur only at most $c_2$ times when $\beta = 1$, where $c_2$ is defined by \cref{eq:def_c1_c2}.
  Thus, the total iteration number of unsuccessful and last epochs is at most
  \begin{align}
    \prn*{
      c_2 + 1
    }
    \prn*{
      \frac{4 \Lmax^2}{M_0 \epsilon}
    }^{1/4}
    \leq
    2 (c_2 + 1) \frac{\Lmax^{1/2}}{M_0^{1/4} \epsilon^{1/4}}.
  \end{align}
  Putting this bound together with \cref{eq:Ksuc_upperbound_tight} concludes the proof.
\end{proof}

Note that bound \cref{eq:complexity_bound_tight} is better than \cref{eq:complexity_bound} since $M_0 \leq \Mmax$.
If the input parameters of \cref{alg:proposed_agd} are set as $\Linit = \Theta(L_f)$ and $M_0 = \Theta(M_f)$, both bounds \cref{eq:complexity_bound,eq:complexity_bound_tight} are simplified to $O(L_f^{1/2} M_f^{1/4} \Delta \epsilon^{-7/4})$.
If $\Linit$ and $M_0$ deviate significantly from $L_f$ and $M_f$, the coefficient of the $O(\epsilon^{-7/4})$ or $O(\epsilon^{-1/4})$ terms in the bounds will be large.
We emphasize, however, that \cref{thm:complexity} is valid no matter what values are chosen for the parameters $\Linit, M_0 > 0$, $\alpha > 1$, and $0 < \beta \leq 1$, which shows the algorithm's robustness.

The number of function and gradient evaluations is of the same order as the iteration complexity given in \cref{thm:complexity}, because \cref{alg:proposed_agd} evaluates the objective function and the gradient at two or three points in each iteration.


Since \cref{alg:proposed_agd} does not need the target accuracy $\epsilon$ as input, we can deduce the following global convergence property as a byproduct of the complexity analysis.
\begin{corollary}
  \label{cor:global_convergence}
  Suppose that \cref{asm:gradient_hessian_lip} holds.
  Let $z_K$ denote $\bar y_k$ at the total iteration $K$ of \cref{alg:proposed_agd}.
  Then, the following holds:
  \begin{align}
    \lim_{K \to \infty}
    \min_{1 \leq i \leq K}
    \norm*{ \nabla f(z_i) }
    = 0.
  \end{align}
\end{corollary}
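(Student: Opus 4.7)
The plan is to derive the corollary as an essentially immediate consequence of \cref{thm:complexity}. The crucial observation is that neither \cref{alg:proposed_agd} itself nor the complexity bound in \cref{thm:complexity} takes $\epsilon$ as input: the iterate sequence produced by the algorithm is independent of any target accuracy, and \cref{thm:complexity} is valid \emph{a posteriori} for every $\epsilon > 0$.

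First, I would fix an arbitrary $\epsilon > 0$ and invoke \cref{thm:complexity}. This guarantees that the first total iteration $K$ at which $\norm*{\nabla f(\bar y_k)} \leq \epsilon$ is bounded by some finite $N(\epsilon)$ obtained from \cref{eq:complexity_bound}. Translating into the notation of the corollary, this yields $\min_{1 \leq i \leq N(\epsilon)} \norm*{\nabla f(z_i)} \leq \epsilon$.

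Next, because the running minimum $K \mapsto \min_{1 \leq i \leq K} \norm*{\nabla f(z_i)}$ is nonincreasing and nonnegative, the previous inequality propagates to every $K \geq N(\epsilon)$. Hence the sequence converges, and its limit is at most $\epsilon$. Since $\epsilon > 0$ is arbitrary, the limit must equal $0$, which is the claim.

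I do not anticipate a serious obstacle. The only point that needs explicit verification is that the execution is genuinely nonterminating, so that $z_i$ is defined for every $i \geq 1$. Since neither the AGD updates nor the restart conditions \cref{eq:descent_condition} and \cref{eq:condition_continue} depend on $\epsilon$, the informal ``until convergence'' stopping rule can be ignored, and one may reason about the full infinite trajectory to which \cref{thm:complexity} applies for every fixed $\epsilon > 0$.
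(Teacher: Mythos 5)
Your argument is correct and is exactly the reasoning the paper intends: since \cref{alg:proposed_agd} and its iterate sequence do not depend on $\epsilon$, \cref{thm:complexity} applies a posteriori for every $\epsilon>0$, giving a finite $N(\epsilon)$ after which the (nonincreasing, nonnegative) running minimum is at most $\epsilon$, whence the limit is $0$. The paper presents the corollary as an immediate byproduct of the complexity analysis without further proof, so you have simply made that deduction explicit.
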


\subsection{Discussion}
\label{sec:discussion}
To gain more insight into the complexity analysis provided above, let us discuss it from three perspectives.

\paragraph{Acceleration parameter}
In order to deduce from \cref{eq:grad_ybark_upperbound_Mestk} that the gradient norm $\norm*{ \nabla f (\bar y_k) }$ is small, the $Z_k$ should be large, and hence the acceleration parameter $\theta_k$ should also be large, from definition \cref{eq:def_ybark} of $Z_k$.
At the same time, $\theta_k$ should be small to obtain a significant objective decrease from \cref{eq:descent_condition}.
Our choice of $\theta_k = \frac{k}{k+1}$ in \cref{eq:def_theta_k} strikes this balance, and the essence is the following two conditions: $\theta_k = 1 - \Theta(k^{-1})$ and $\theta_{k+1}^2 \leq \theta_k \leq \theta_{k+1}$.
As long as $\theta_k$ satisfies them, we can ensure a complexity bound of $O(\epsilon^{-7/4})$ even with a different choice than in \cref{eq:def_theta_k}.

\paragraph{Restart}
The proposed AGD restarts when $S_k = \sum_{i=1}^k \norm*{x_i - x_{i-1}}^2$ is large, which plays two roles in our analysis.
First, as long as the epoch continues, the solutions $y_0,\dots,y_{k-1}$ are guaranteed to be close together, and therefore \cref{lem:gradient_jensen} gives a good approximation $\sum_{i=0}^{k-1} p_{k,i} \nabla f \prn*{ y_i }$ of the gradient $\nabla f(\bar y_k)$.
Second, the restart mechanism ensures that $\norm*{x_k - x_{k-1}}^3$ and $\norm*{x_k - x_{k-1}}^4$ are small compared to $\norm*{x_k - x_{k-1}}^2$ during the epoch, which helps to derive \cref{lem:decrease_epoch} from \cref{lem:potential_decrease_iteration}.




\paragraph{Potential}
Jin et~al.~\cite{jin2018accelerated} also use a potential function to analyze AGD under \cref{asm:gradient_hessian_lip}, but our proof technique differs significantly from theirs.
First, our potential~\cref{eq:def_potential} is more complicated than the existing one.
This complication is mainly caused by the dependence of our $\theta_k$ on $k$; $\theta_k$ in \cite{jin2018accelerated} does not depend on $k$, while our $\theta_k$ does but with the benefit of not requiring knowledge of $M_f$ or $\epsilon$.
Second, Jin et~al.~\cite{jin2018accelerated} prove the potential decrease only in regions where $f$ is weakly convex and use a different technique, negative curvature descent, in other regions.
In contrast, our \cref{lem:potential_decrease_iteration} is valid in all regions and provides a unified analysis with the potential function.
This unified analysis is made possible by taking full advantage of the Lipschitz continuous Hessian through \cref{lem:gradient_jensen,lem:trapezoidal_rule_error}.

\section{Numerical experiments}
This section compares the performance of the proposed method and six existing methods.
We implemented objective functions in Python with JAX~\cite{jax2018github} and Flax~\cite{flax2020github}.
We implemented some methods in Python and used SciPy implementation~\cite{virtanen2020scipy} for other methods.
They were executed on a computer with an Apple M1 Chip (8 cores, 3.2 GHz) and 16 GB RAM.
The source code used in the experiments is available on GitHub.%
\footnote{
  \url{https://github.com/n-marumo/restarted-agd}
}

\subsection{Compared algorithms}
We compared the following algorithms: \Proposed, \GD, \JNJ, \LL, \OC, \LBFGS, and \CG.
\begin{itemize}
  \item 
  \Proposed is \cref{alg:proposed_agd} with parameters set as $(\Linit, M_0, \alpha, \beta) = (10^{-3}, 10^{-16},\allowbreak 2, 0.9)$.
  We used \cref{eq:update_Mest_practical} for updating $M_k$.
  \item 
  \GD is gradient descent with Armijo-type backtracking.
  This method has input parameters $\Linit$, $\alpha$, and $\beta$ similar to those in \Proposed, which were set as $(\Linit, \alpha, \beta) = (10^{-3}, 2, 0.9)$.
  \item
  \JNJ \cite[Algorithm~2]{jin2018accelerated} is an AGD method reviewed in \cref{sec:related_work}.
  The parameters were set in accordance with \cite[Eq.~(3)]{jin2018accelerated}.
  The equation involves constants $c$ and $\chi$, whose values are difficult to determine; we set them as $c = \chi = 1$.
  \item
  \LL \cite[Algorithm~2]{li2022restarted} is a state-of-the-art AGD method that makes use of the practical superiority of AGD.
  The parameters were set in accordance with \cite[Theorem~2.2 and Section~4]{li2022restarted}.
  \item
  \OC is an AGD method with an adaptive restart scheme proposed in \cite{odonoghue2015adaptive} for \emph{convex} optimization.
  We used the gradient scheme among the two restart schemes proposed in \cite[Section~3.2]{odonoghue2015adaptive}.
  The parameter of \cite[Algorithm~1]{odonoghue2015adaptive} was set as $q = 0$, and the step size was determined by backtracking as in \GD.
  \item
  \LBFGS is the limited-memory BFGS method~\cite{byrd1995limited}.
  We used the SciPy implementation \cite{virtanen2020scipy}, i.e., \texttt{scipy.optimize.minimize} with option \texttt{method="L-BFGS-B"}.
  \item
  \CG is a nonlinear conjugate gradient algorithm, a variant of the Fletcher--Reeves method described in \cite[pp.120--122]{nocedal2006numerical}.
  We used the SciPy implementation \cite{virtanen2020scipy}, i.e., \texttt{scipy.optimize.minimize} with option \texttt{method="CG"}.
\end{itemize}
\GD has a complexity bound of $O(\epsilon^{-2})$, \JNJ does $\tilde O(\epsilon^{-7/4})$, and \LL and \Proposed do $O(\epsilon^{-7/4})$.
The other methods, \OC, \LBFGS, and \CG, do not have a guarantee superior to $O(\epsilon^{-2})$ for general nonconvex problems.

The parameter setting for \JNJ and \LL requires the values of the Lipschitz constants $L_f$ and $M_f$ and the target accuracy $\epsilon$.
For these two methods, we tuned the best $L_f$ among $\set{10^{-4},10^{-3},\dots,10^4}$ and set $M_f = 1$ and $\epsilon = 10^{-16}$ following \cite{li2022restarted}.
Note that if these values deviate from the actual values, the methods do not guarantee convergence.

\subsection{Problem setting}
We test the performance of the algorithms with three types of problem instances.

The first two instances are of training neural networks for classification and autoencoder:
\begin{align}
  \min_{w \in \R^d} \ 
  &
  \frac{1}{N}
  \sum_{i=1}^N
  \ell_{\mathrm{CE}}(y_i, \phi_1(x_i; w)),
  \label{eq:instance_classification}\\
  \min_{w \in \R^d} \ 
  &
  \frac{1}{2MN}
  \sum_{i=1}^N
  \norm*{x_i - \phi_2(x_i; w)}^2.
  \label{eq:instance_autoencoder}
\end{align}
Above, $x_1,\dots,x_N \in \R^M$ are data vectors, $y_1,\dots,y_N \in \set{0, 1}^K$ are one-hot label vectors, $\ell_{\mathrm{CE}}$ is the cross-entropy loss, and $\phi_1(\cdot; w): \R^M \to \R^K$ and $\phi_2(\cdot; w): \R^M \to \R^M$ are neural networks parameterized by $w \in \R^d$.
We used $N = 10000$ randomly chosen data from the MNIST handwritten digit dataset.
The dataset consists of $K$-class images, each represented as an $M$-dimensional vector, where $K = 10$ and $M = 784$.
The number of parameters was $d = 25818$ in \cref{eq:instance_classification} and $d = 52064$ in \cref{eq:instance_autoencoder}.
See \cref{sec:detail_neural_net} for the details of the neural networks used in the experiments.

The third instance is low-rank matrix completion:
\begin{align}
  \min_{\substack{U \in \R^{p \times r}\\V \in \R^{q \times r}}}\ 
  \frac{1}{2 N}
  \sum_{(i, j, s) \in \Omega}
  \prn*{(U V^\top)_{ij} - s}^2
  + \frac{1}{2 N} \norm*{U^\top U - V^\top V}_{\mathrm{F}}^2.
\end{align}
The set $\Omega$ consists of $N$ observed entries of a $p \times q$ data matrix, and $(i, j, s) \in \Omega$ means that the $(i, j)$-entry is $s$.
The partially observed matrix is approximated by the matrix $U V^\top$ of rank $r$.
The regularization term with the Frobenius norm $\norm{\cdot}_{\mathrm{F}}$ was proposed in \cite{tu2016low} as a way to balance $U$ and $V$.
We used the MovieLens-100K dataset \cite{harper2015movielens}, where $p = 943$, $q = 1682$, and $N = 100000$.
We set the rank as $r \in \set{100, 200}$.
The number of parameters was $(p+q)r \in \set{262500, 525000}$.

\subsection{Results}
\subsubsection{Behavior of proposed algorithm}
\Cref{fig:experiments_objlm} shows the objective function value $f(x_k)$ and the estimates $\Lest$ and $\Mest_k$ at each iteration of \Proposed.
The iterations at which a restart occurred are also marked; ``successful'' and ``unsuccessful'' mean restarts at Line \ref{alg-line-agd:restart_successful} and Line \ref{alg-line-agd:restart_unsuccessful} of \cref{alg:proposed_agd}, respectively.

This figure illustrates that the proposed algorithm adaptively estimates the Lipschitz constants $\Ltrue$ and $\Mtrue$.
In particular, in \cref{fig:exp_objlm_classification}, the values of $\Lest$ and $\Mest_k$ differ significantly between the early and final stages; the algorithm adopts a larger step size (i.e., smaller $\Lest$) near the stationary point.
This adaptability is expected to improve performance over algorithms that treat the Lipschitz constants as given values.

We can also see that the proposed method restarts frequently in the early stages but that the frequency decreases as the iterations progress.
This behavior is explained by the restart condition on Line~\ref{alg-line-agd:check_large_Sk} of \cref{alg:proposed_agd}; as the algorithm progresses, the iterate $x_k$ moves less in general, i.e., $S_k \coloneqq \sum_{i=1}^k \norm*{x_i - x_{i-1}}^2$ grows slower, and the restart condition holds less frequently.

The value of $\Mest_k$ becomes very small at each restart because we set $\Mest_0 = 10^{-16}$.
The value is updated immediately, so changing $\Mest_0$ to $10^{-12}$ or $0$, for example, does not affect the algorithm's behavior for the problem instances.

\begin{figure}[!t]
  \centering
  \includegraphics[height=3.5ex]{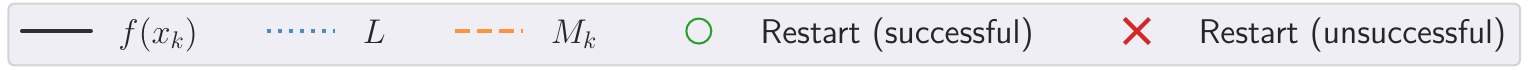}\par%
  \subfloat[Classification\label{fig:exp_objlm_classification}]{%
    \includegraphics[width=0.46\linewidth]{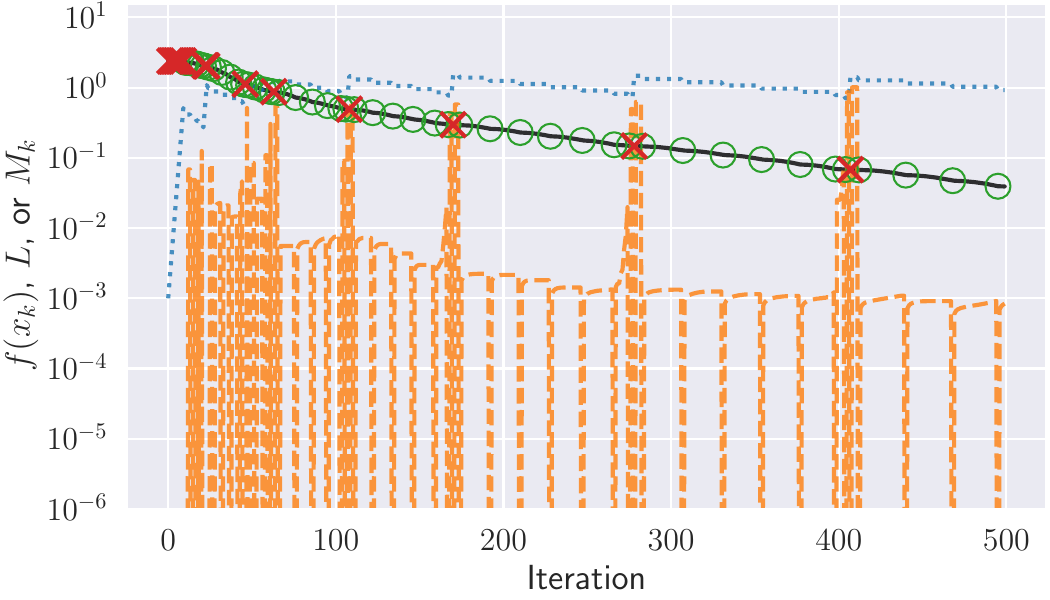}\hspace{0.05\linewidth}%
    \includegraphics[width=0.46\linewidth]{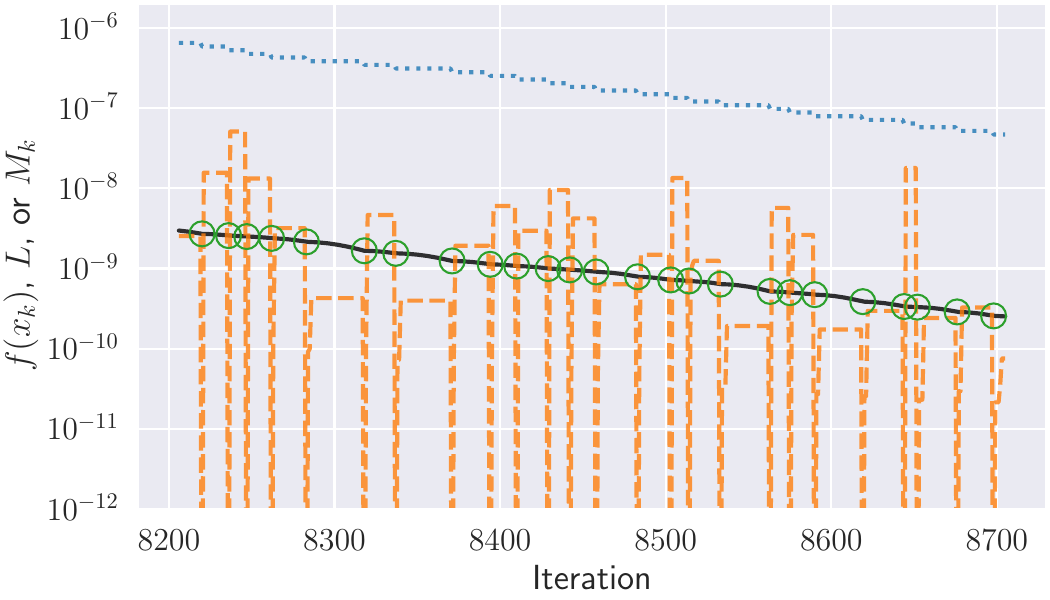}%
  }\par%
  \subfloat[Autoencoder\label{fig:exp_objlm_autoencoder}]{%
    \includegraphics[width=0.46\linewidth]{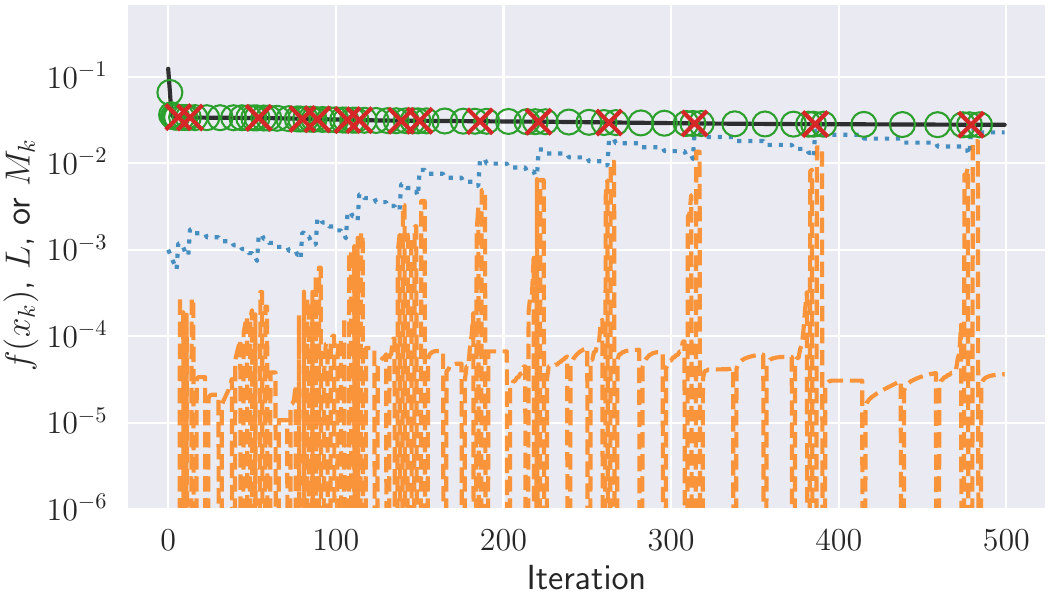}\hspace{0.05\linewidth}%
    \includegraphics[width=0.46\linewidth]{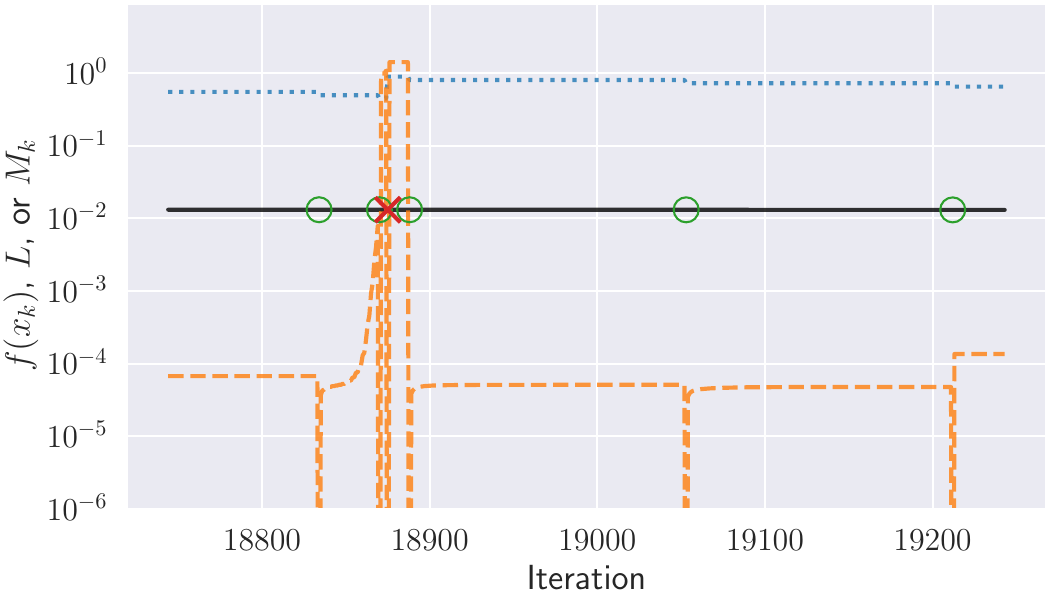}%
  }\par%
  \subfloat[Matrix completion ($r = 100$)\label{fig:exp_objlm_completion_100}]{%
    \includegraphics[width=0.46\linewidth]{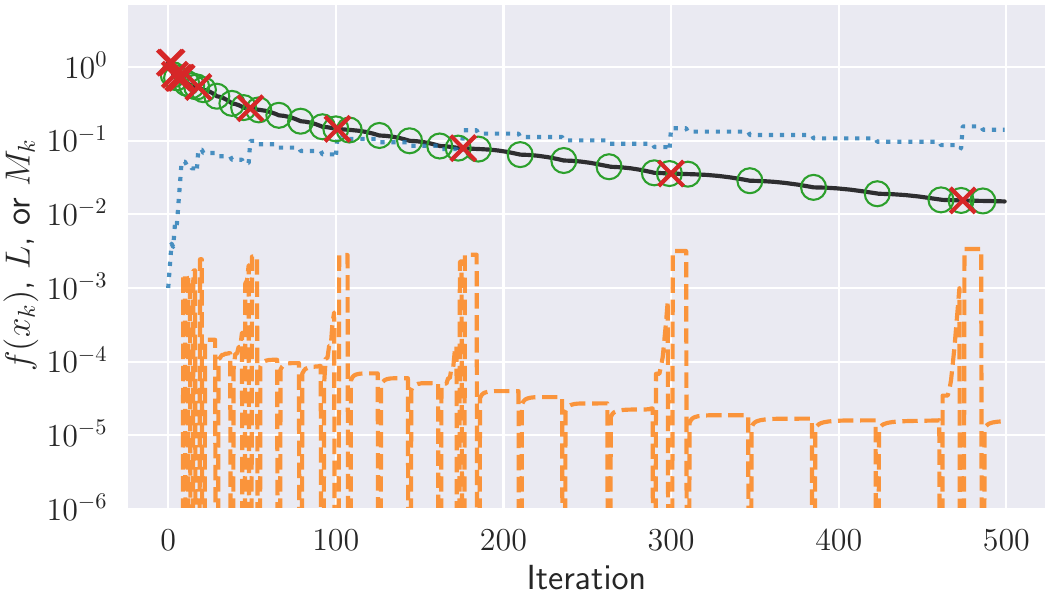}\hspace{0.05\linewidth}%
    \includegraphics[width=0.46\linewidth]{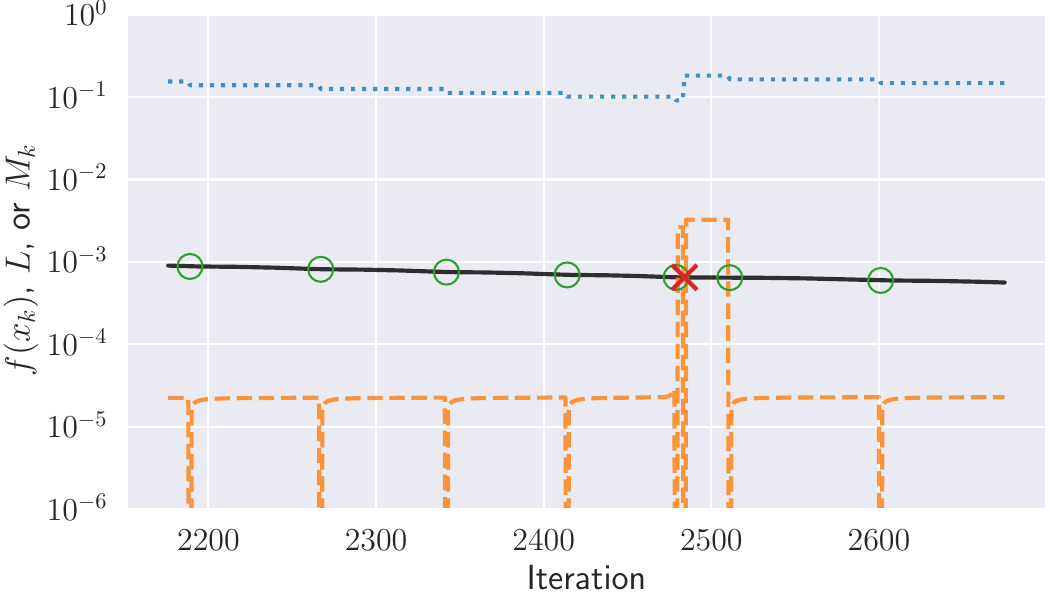}%
  }\par%
  \subfloat[Matrix completion ($r = 200$)\label{fig:exp_objlm_completion_200}]{%
    \includegraphics[width=0.46\linewidth]{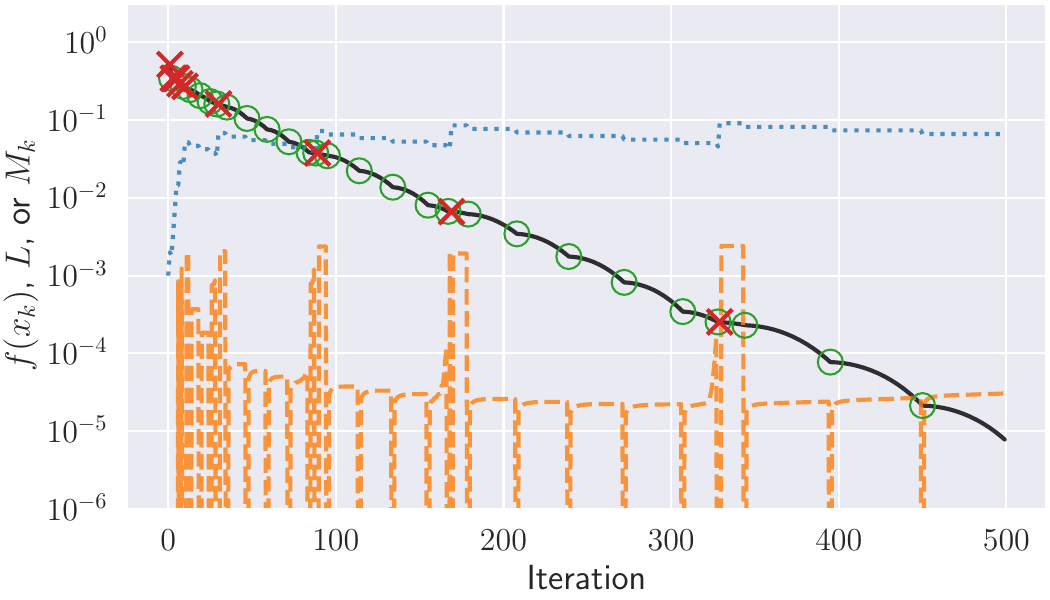}\hspace{0.05\linewidth}%
    \includegraphics[width=0.46\linewidth]{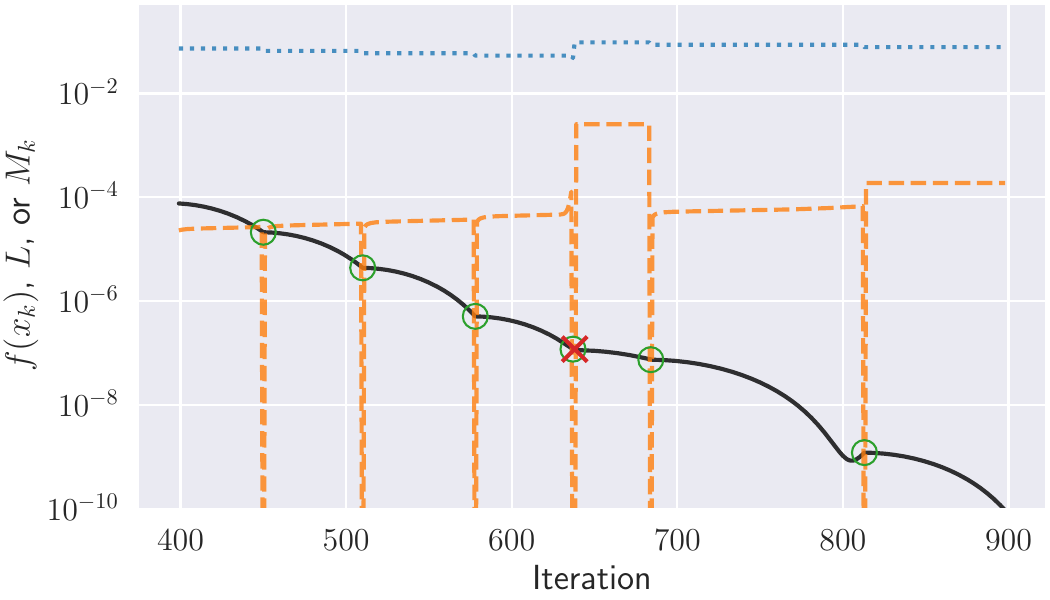}%
  }\par%
  \caption{
    The objective function value $f(x_k)$ and the estimates $L$ and $M_k$ at each iteration of the proposed method.
    The iterations at which a restart occurred are marked.
    Left: the first 500 iterations.
    Right: 500 iterations near a stationary point.
    \label{fig:experiments_objlm}
  }
\end{figure}

\subsubsection{Comparison with algorithms with $O(\epsilon^{-7/4})$ or $\tilde O(\epsilon^{-7/4})$ guarantees}
\cref{fig:experiments} shows the objective function values and the gradient norm obtained and compares \Proposed with \JNJ and \LL, which have complexity bounds of $O(\epsilon^{-7/4})$ or $\tilde O(\epsilon^{-7/4})$.
\Proposed generally performs well, especially in reducing the gradient norm.

\Cref{fig:exp_classification,fig:exp_completion_200} show that \Proposed converges faster than the sublinear rate guaranteed by the theoretical analysis, $O(\epsilon^{-7/4})$.
This speed-up is because \Proposed automatically estimates the Lipschitz constants and successfully captures the local curvature of $f$.
In the setting shown in \cref{fig:exp_completion_100}, \LL reduces the objective function value faster than \Proposed, probably because the two methods converge to different stationary points; \LL is fortunate in this instance to have found a better stationary point in terms of the function value.

We emphasize again that we fixed all of the input parameters for \Proposed, whereas we tuned $L_f$ for the input parameter for \LL and \JNJ.
\Proposed achieved comparable or better performance than those of the (theoretically) state-of-the-art methods without parameter tuning.

\begin{figure}[!t]
  \centering
  \includegraphics[height=3.5ex]{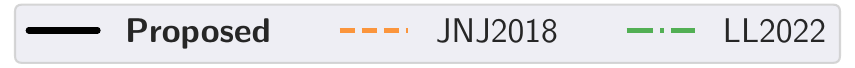}\par%
  \subfloat[Classification\label{fig:exp_classification}]{%
    \includegraphics[width=0.38\linewidth]{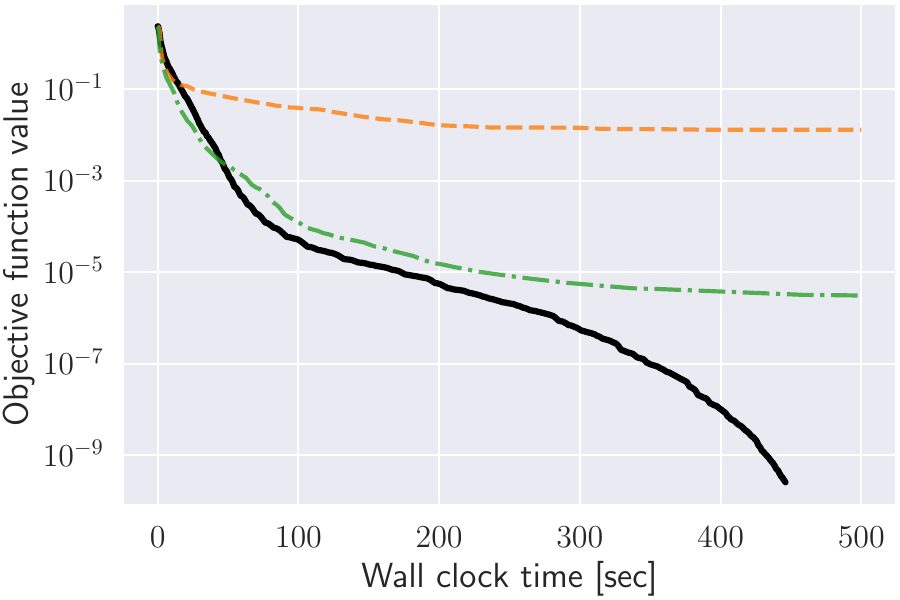}\hspace{0.1\linewidth}%
    \includegraphics[width=0.38\linewidth]{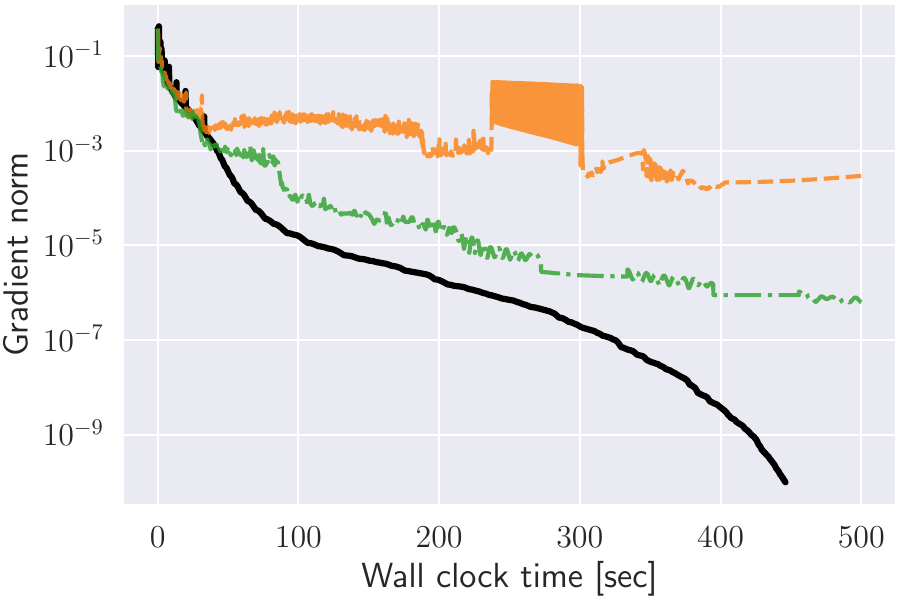}%
  }\par%
  \subfloat[Autoencoder\label{fig:exp_autoencoder}]{%
    \includegraphics[width=0.38\linewidth]{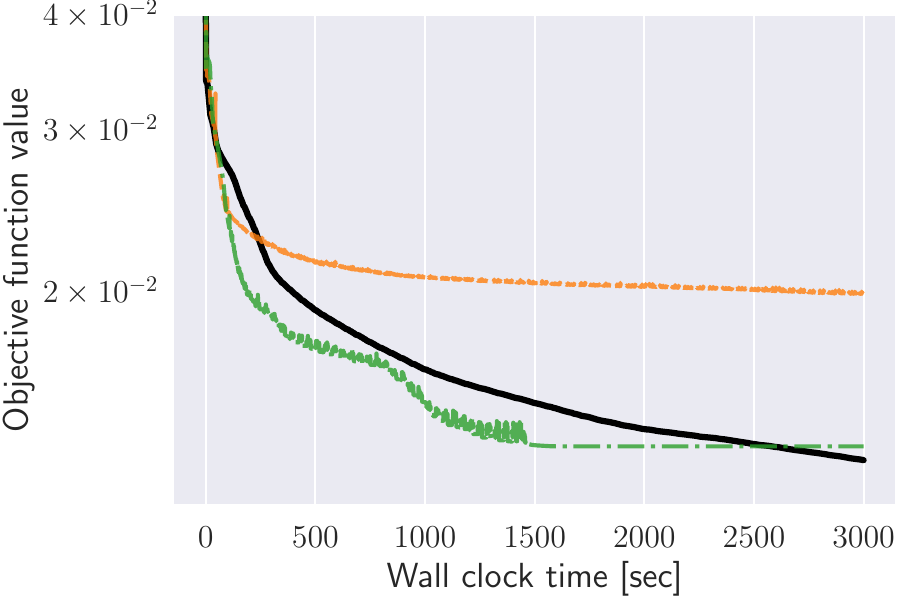}\hspace{0.1\linewidth}%
    \includegraphics[width=0.38\linewidth]{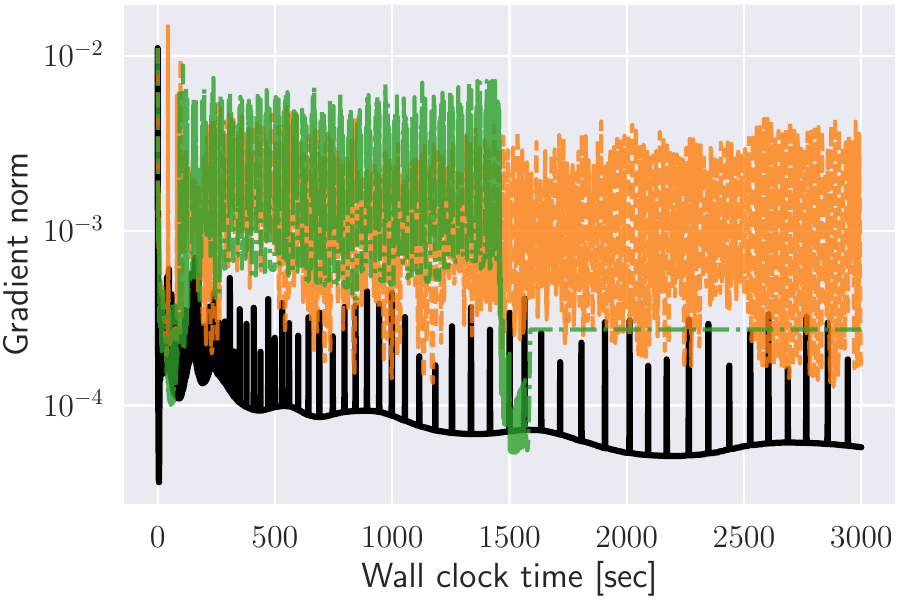}%
  }\par%
  \subfloat[Matrix completion ($r = 100$)\label{fig:exp_completion_100}]{%
    \includegraphics[width=0.38\linewidth]{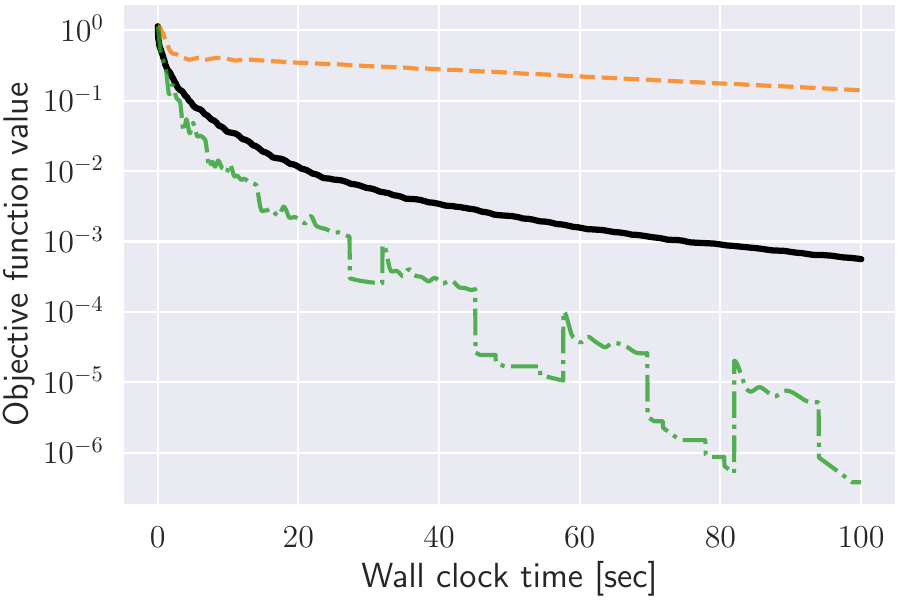}\hspace{0.1\linewidth}%
    \includegraphics[width=0.38\linewidth]{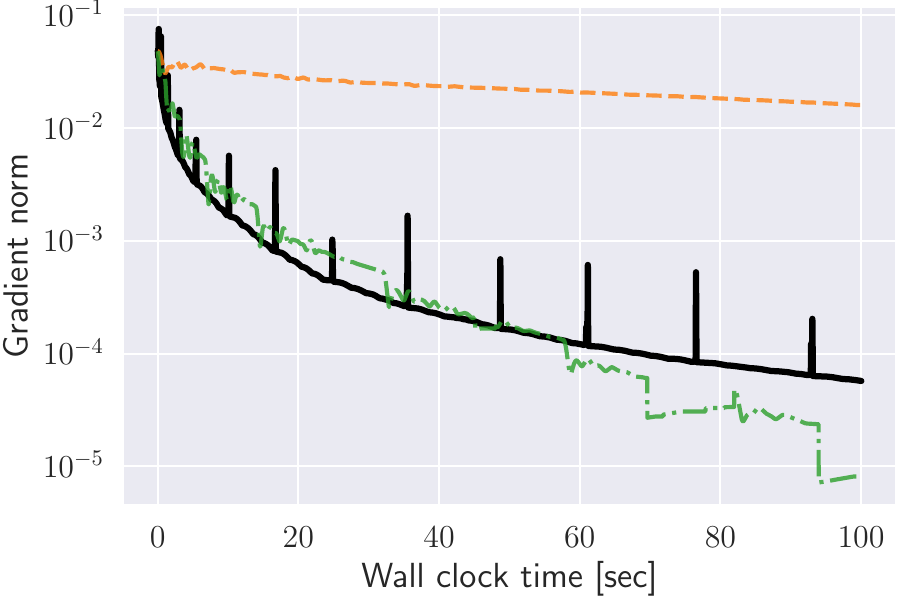}%
  }\par%
  \subfloat[Matrix completion ($r = 200$)\label{fig:exp_completion_200}]{%
    \includegraphics[width=0.38\linewidth]{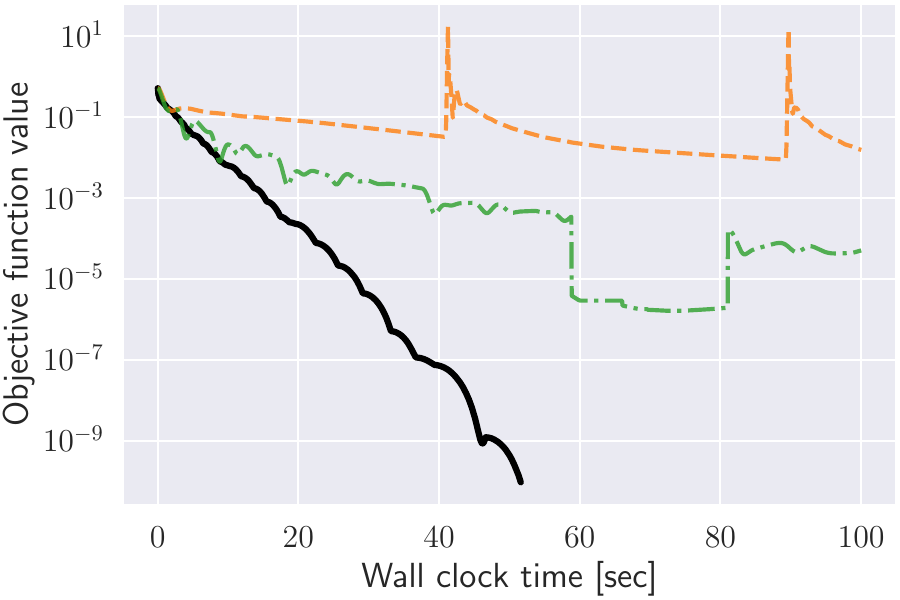}\hspace{0.1\linewidth}%
    \includegraphics[width=0.38\linewidth]{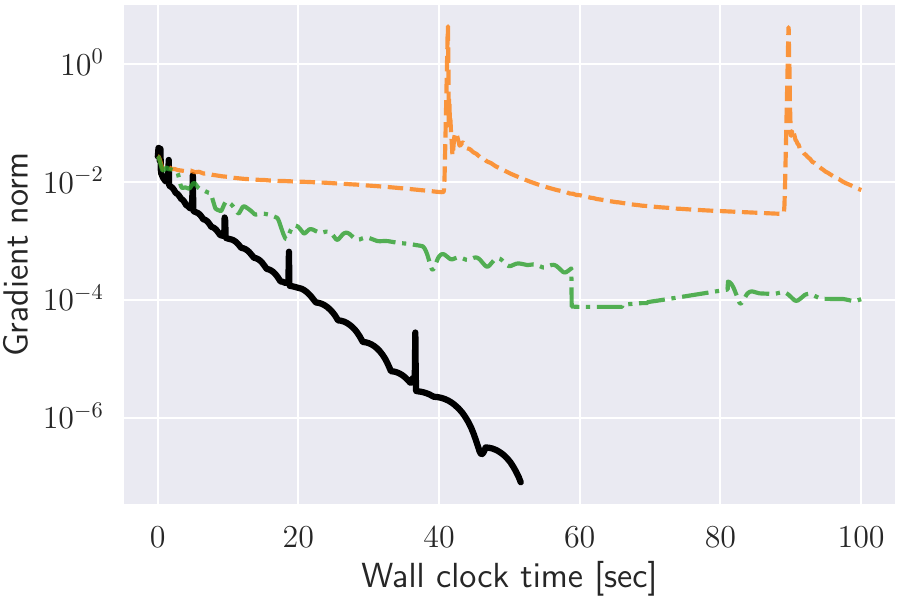}%
  }\par%
  \caption{Numerical comparison with algorithms with $O(\epsilon^{-7/4})$ or $\tilde O(\epsilon^{-7/4})$ complexity bounds.\label{fig:experiments}}
\end{figure}

\subsubsection{Comparison with algorithms without $O(\epsilon^{-7/4})$ or $\tilde O(\epsilon^{-7/4})$ guarantees}
The previous section compares the proposed algorithm with two of the theoretically superior algorithms reviewed in \cref{sec:related_work}.
This section provides further numerical comparisons with \OC, \LBFGS, and \CG, which are known to be practical but have no complexity guarantees or weaker ones than the algorithms in the previous section.
We also provide the results of \GD as a baseline.

The results are given in \cref{fig:experiments_further}, illustrating that \OC, \LBFGS, and \CG perform surprisingly well in practice, though not in theory.\footnote{
  To obtain results of \LBFGS and \CG, we ran the SciPy functions multiple times with the maximum number of iterations set to $2^0, 2^1, 2^2, 2^3,\dots$ because we cannot obtain the solution at each iteration while running SciPy codes of \LBFGS and \CG, but only the final result.
  The results are thus plotted as markers instead of lines in \cref{fig:experiments_further}.
}
Although these methods are often ignored in the current trend of research on algorithms that achieve better complexity bounds, they are so practical that they are worth considering from theoretical perspectives.
In future work, we would like to investigate whether these practical algorithms achieve the same complexity bound as the proposed algorithm or what types of instances can worsen their complexity bounds.



\begin{figure}[!t]
  \centering
  \includegraphics[height=3.5ex]{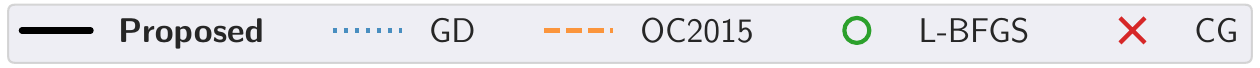}\par%
  \subfloat[Classification\label{fig:exp_further_classification}]{%
    \includegraphics[width=0.38\linewidth]{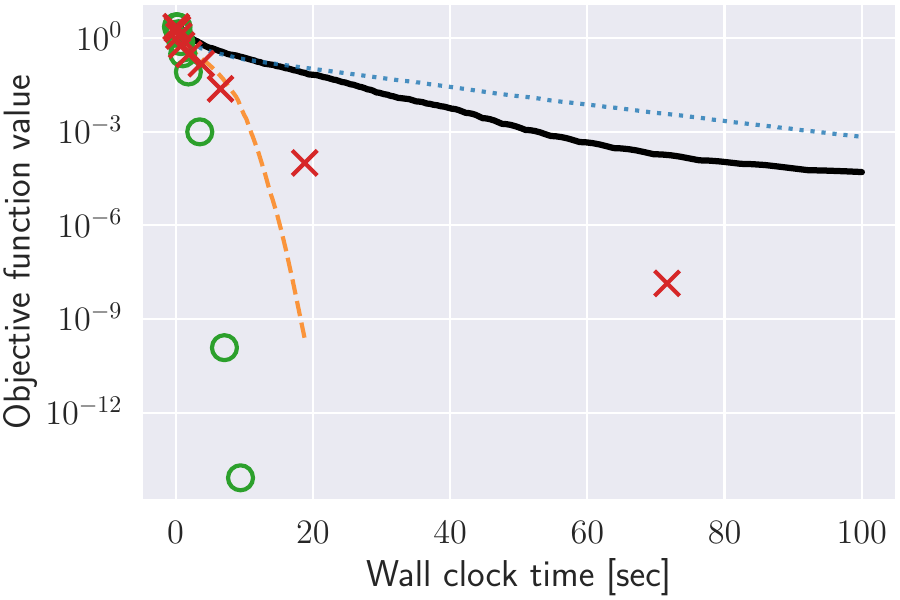}\hspace{0.1\linewidth}%
    \includegraphics[width=0.38\linewidth]{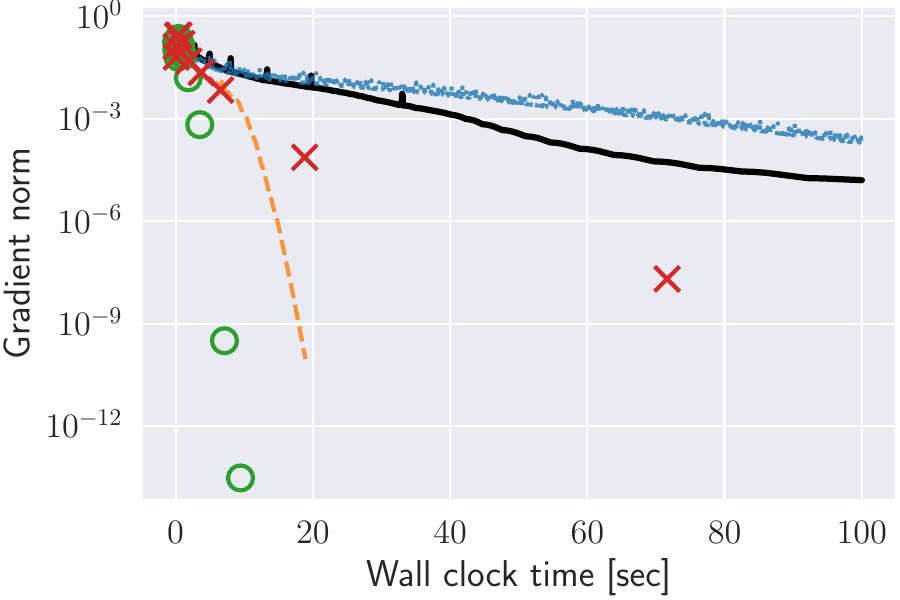}%
  }\par%
  \subfloat[Autoencoder\label{fig:exp_further_autoencoder}]{%
    \includegraphics[width=0.38\linewidth]{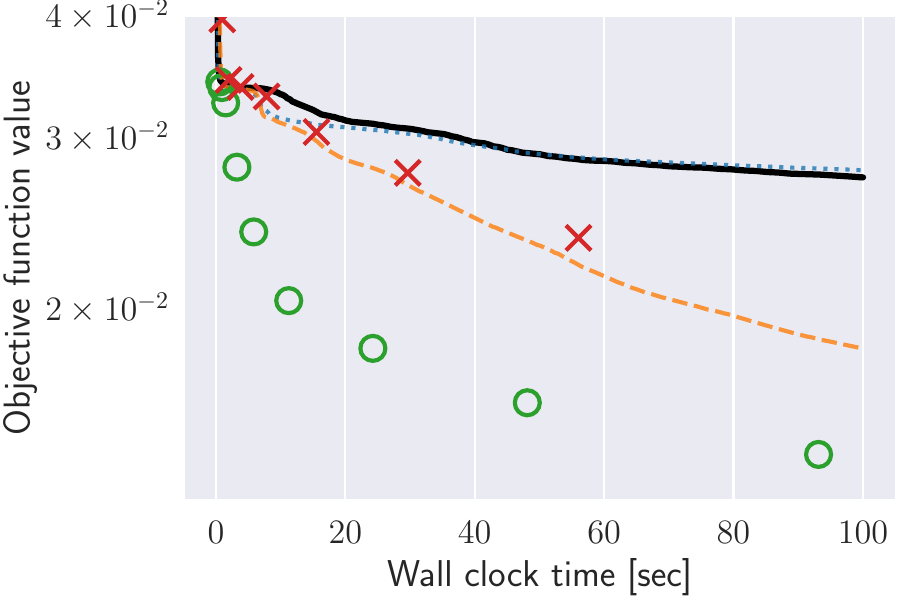}\hspace{0.1\linewidth}%
    \includegraphics[width=0.38\linewidth]{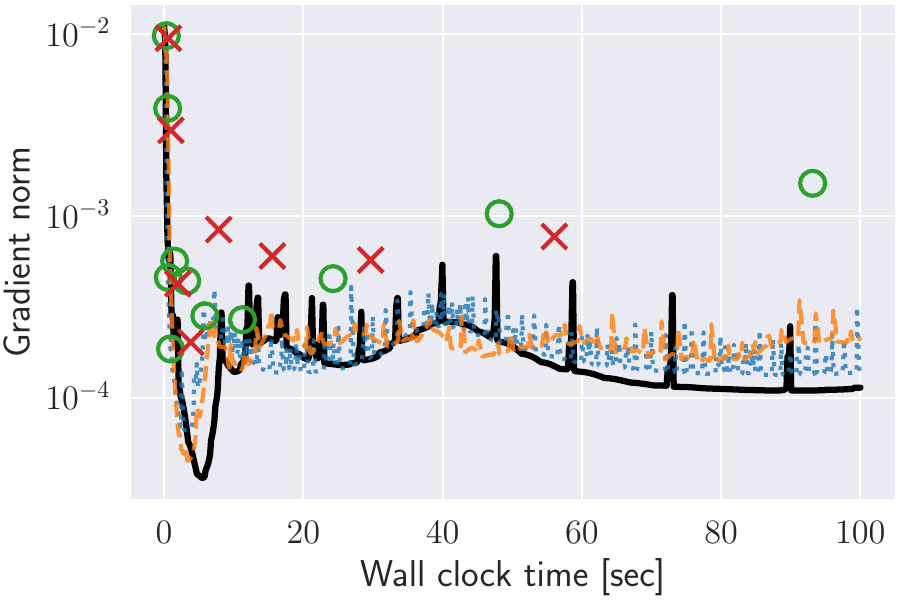}%
  }\par%
  \subfloat[Matrix completion ($r = 100$)\label{fig:exp_further_completion_100}]{%
    \includegraphics[width=0.38\linewidth]{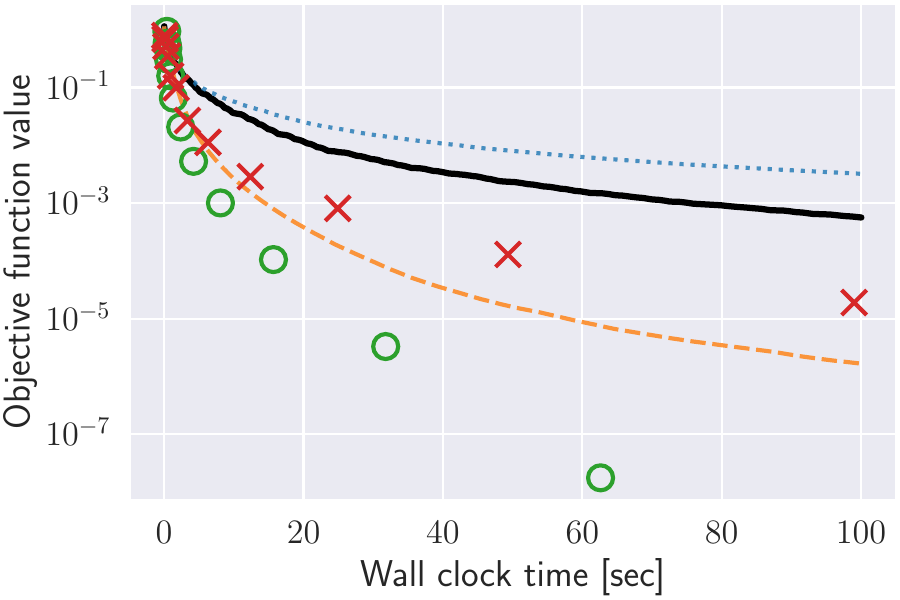}\hspace{0.1\linewidth}%
    \includegraphics[width=0.38\linewidth]{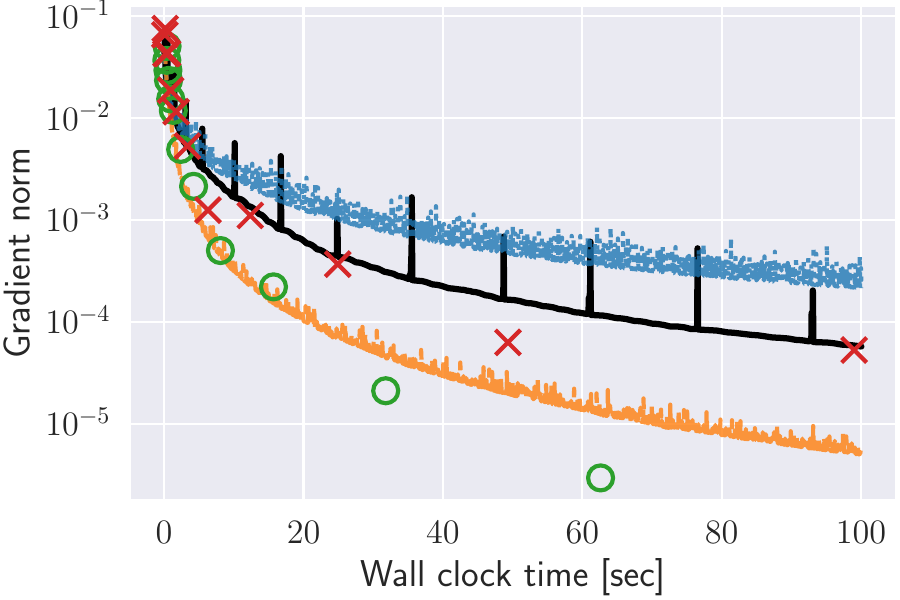}%
  }\par%
  \subfloat[Matrix completion ($r = 200$)\label{fig:exp_further_completion_200}]{%
    \includegraphics[width=0.38\linewidth]{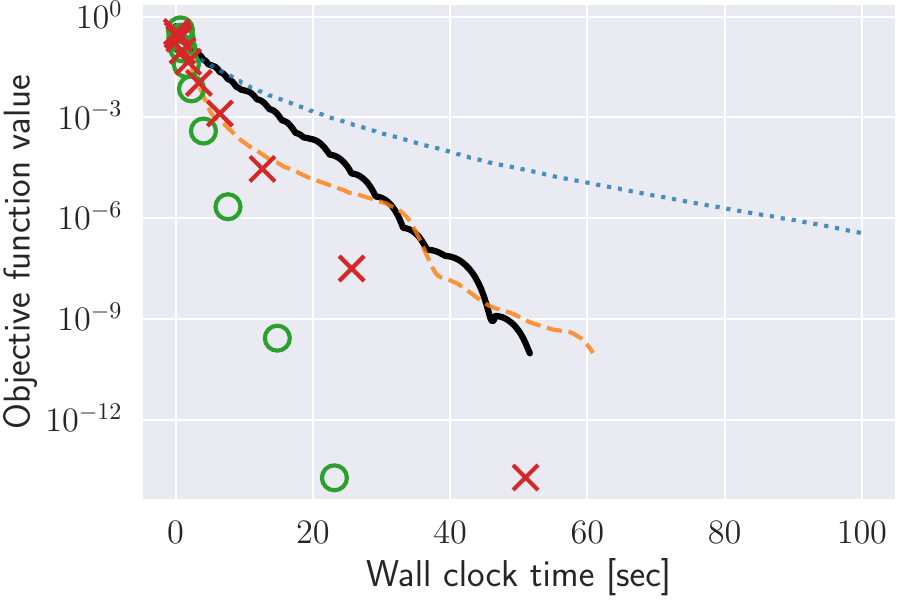}\hspace{0.1\linewidth}%
    \includegraphics[width=0.38\linewidth]{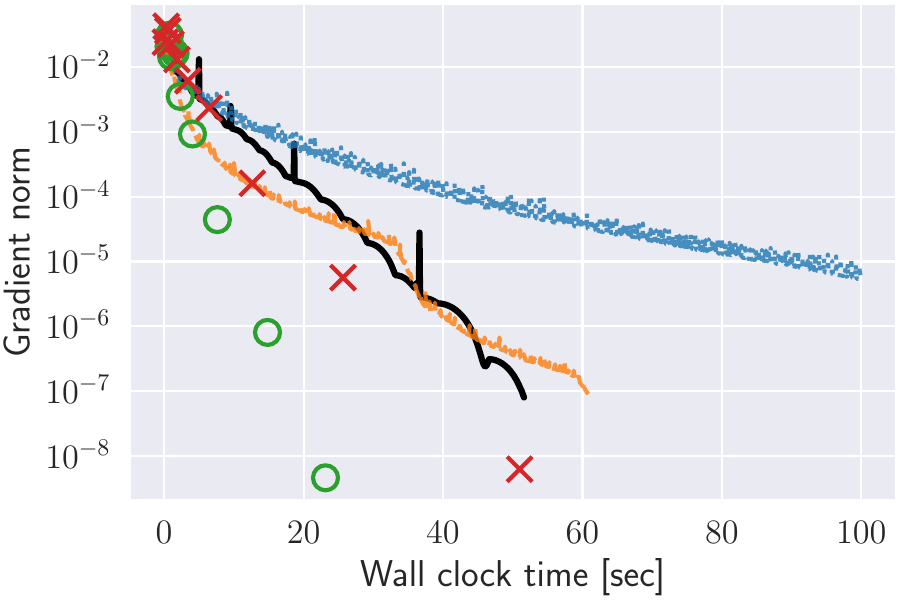}%
  }\par%
  \caption{Numerical comparison with practical algorithms without $O(\epsilon^{-7/4})$ or $\tilde O(\epsilon^{-7/4})$ complexity bounds.\label{fig:experiments_further}}
\end{figure}

\appendix

\section{Proofs for \cref{sec:preliminary}}
\label{sec:proof_lem_lip_hessian}
\begin{proof}[Proof of \cref{lem:gradient_jensen}]
  To simplify the notation, let $\bar z \coloneqq \sum_{i=1}^n \lambda_i z_i$.
  Taylor's theorem gives
  \[
    \nabla f(z_i) - \nabla f(\bar z)
    =
    \nabla^2 f(\bar z) (z_i - \bar z)
    + \int_0^1 \prn*{ \nabla^2 f(\bar z + t (z_i - \bar z)) - \nabla^2 f(\bar z) } (z_i - \bar z) dt,
  \]
  and its weighted average gives
  \begin{align}
    \sum_{i=1}^n \lambda_i \nabla f(z_i) - \nabla f(\bar z)
    &=
    \sum_{i=1}^n \lambda_i
    \int_0^1 \prn*{ \nabla^2 f(\bar z + t (z_i - \bar z)) - \nabla^2 f(\bar z) } (z_i - \bar z) dt.
  \end{align}
  Therefore, we obtain
  \begin{alignat}{2}
    \norm*{
      \sum_{i=1}^n \lambda_i \nabla f(z_i) - \nabla f(\bar z)
    }
    &\leq
    \sum_{i=1}^n \lambda_i
    \int_0^1 \norm*{ \nabla^2 f(\bar z + t (z_i - \bar z)) - \nabla^2 f(\bar z) } \norm*{z_i - \bar z} dt\\
    &\leq
    \sum_{i=1}^n \lambda_i
    \int_0^1 M_f t \norm*{z_i - \bar z}^2 dt
    \quad\by{\cref{asm:hessian_lip}}\\
    &=
    \frac{M_f}{2} \sum_{i=1}^n \lambda_i \norm*{z_i - \bar z}^2.
  \end{alignat}
  Furthermore, elementary algebra shows that $\sum_{i=1}^n \lambda_i \norm*{z_i - \bar z}^2 = \sum_{1 \leq i < j \leq n} \lambda_i \lambda_j \|z_i - z_j\|^2$ as
  \begin{align}
    \sum_{1 \leq i < j \leq n} \lambda_i \lambda_j \norm*{z_i - z_j}^2
    &= 
    \frac{1}{2} \sum_{i,j = 1}^n \lambda_i \lambda_j \norm*{z_i - z_j}^2\\
    &= 
    \frac{1}{2} \sum_{i,j = 1}^n \lambda_i \lambda_j \norm*{z_i}^2
    + \frac{1}{2} \sum_{i,j = 1}^n \lambda_i \lambda_j \norm*{z_j}^2
    - \sum_{i,j = 1}^n \inner{\lambda_i z_i}{\lambda_j z_j}\\
    &= 
    \sum_{i=1}^n \lambda_i \norm*{z_i}^2 - \norm*{\bar z}^2
    =
    \sum_{i=1}^n \lambda_i \norm*{z_i - \bar z}^2,
  \end{align}
  which completes the proof.
\end{proof}

\begin{proof}[Proof of \cref{lem:trapezoidal_rule_error}]
  We obtain the desired result as follows:
  \begin{alignat}{2}
    &\mathInd
      f(x) - f(y)
      - \frac{1}{2} \inner*{\nabla f(x) + \nabla f(y)}{x - y}\\
    &=
      \int_0^1 \inner*{\nabla f(t x + (1 - t) y)}{x - y} dt
      - \frac{1}{2} \inner*{\nabla f(x) + \nabla f(y)}{x - y}
    &\quad&\by{Taylor's\\theorem}\\
    &=
      \int_0^1 \inner[\Big]{\nabla f(t x + (1 - t) y) - t \nabla f(x) - (1 - t) \nabla f(y)}{x - y} dt\\
    &\leq
    \int_0^1 \norm[\Big]{\nabla f(t x + (1 - t) y) - t \nabla f(x) - (1 - t) \nabla f(y)} \norm{x - y} dt\\
    &\leq
    \frac{\Mtrue}{2} \int_0^1 t (1 - t) \norm{x - y}^3 dt
    =
    \frac{\Mtrue}{12} \norm{x - y}^3
    &\quad&\by{\cref{lem:gradient_jensen}}.
  \end{alignat}
\end{proof}

\section{Details of neural networks used in experiments}
\label{sec:detail_neural_net}
For problem~\cref{eq:instance_classification}, we used a three-layer fully connected network with bias parameters.
The layers each had $784$, $32$, $16$, and $10$ nodes and had the logistic sigmoid activation.
The total number of the parameters is $d = (784 \times 32 + 32 \times 16 + 16 \times 10) + (32 + 16 + 10) = 25818$.

For problem~\cref{eq:instance_autoencoder}, we used a four-layer fully connected network with bias parameters.
The layers each had $784$, $32$, $16$, $32$, and $784$ nodes and had the logistic sigmoid activation.
The total number of the parameters is $d = (784 \times 32 + 32 \times 16 + 16 \times 32 + 32 \times 784) + (32 + 16 + 32 + 784) = 52064$.

We initialize the parameters of the neural networks with \texttt{flax.linen.Module.init} method.
See the documentation\footnote{\url{https://flax.readthedocs.io/en/latest/api_reference/flax.linen/module.html}} for more details of the initialization.

\section*{Acknowledgments}
We are deeply grateful to the anonymous reviewers, who carefully read the manuscript and provided helpful comments.
We also thank the associate editor for sharing information on relevant literature.

\bibliographystyle{siamplain}
\bibliography{../myrefs_siam}
\end{document}